\documentclass[11pt,oneside,reqno]{article}
\usepackage[top=2.5cm, bottom=2.5cm, right=2.5cm, left=2.5cm]{geometry}
\usepackage[utf8]{inputenc}
\usepackage[T1]{fontenc}
\usepackage{lmodern}
\usepackage[UKenglish]{babel}
\usepackage{amsmath, mathtools, amsthm, amssymb}
\usepackage{changepage}
\usepackage{hyperref}
\usepackage{cite}
\usepackage{bbm}
\usepackage{xcolor}

\DeclareMathOperator*{\esssup}{ess\,sup}

\theoremstyle{plain}
\newtheorem{thm}{Theorem}[section] % reset theorem numbering for each section
\newtheorem{corollary}{Corollary}[thm]
\newtheorem{lemma}[thm]{Lemma}
\newtheorem{prop}[thm]{Proposition}

\theoremstyle{definition}
\newtheorem{defn}[thm]{Definition} % definition numbers are dependent on theorem numbers
\newtheorem{assumption}{Assumption}

\theoremstyle{remark}
\newtheorem{rmk}{Remark}
\newtheorem*{notation}{Notation}

\newcommand{\mathsc}[1]{\text{\textsc{#1}}}

\newcommand{\R}{\mathbb{R}}
\newcommand{\Prob}{\mathbb{P}}
\newcommand{\Q}{\mathbb{Q}}
\newcommand{\E}{\mathbb{E}}
\newcommand{\dx}{\, \mathrm{d}x}
\newcommand{\dy}{\, \mathrm{d}y}
\newcommand{\dz}{\, \mathrm{d}z}
\newcommand{\ds}{\, \mathrm{d}s}
\newcommand{\dr}{\, \mathrm{d}r}
\newcommand{\dt}{\, \mathrm{d}t}
\newcommand{\di}{\, \mathrm{d}}
\newcommand{\expect}[1]{\E \left[ #1 \right]}
\newcommand{\Vrho}[1]{\| \varrho \|_{\mathsc{tv}(0,#1)}}

\begin{document}
	
\title{A free boundary problem for the mean-field limit of diffusing particles with nonlinear boundary reactivity}
	
\author{Eliana Fausti\thanks{Imperial College London and Scuola Normale Superiore di Pisa, {\tt eliana.fausti@sns.it}.} \and Andreas S{\o}jmark\thanks{London School of Economics, {\tt a.sojmark@lse.ac.uk}.}}
	
\date{\today}
	
\maketitle

\begin{abstract}
	Consider a finite system of diffusing particles coupled through a reactive boundary. Each particle is reflected, but may react with the boundary according to a killing mechanism which depends on the current reactivity of the boundary and the particle's local time along it. With every such reaction, the boundary moves and its reactivity adjusts. We show that this system admits a unique mean-field limit, described by a free boundary problem with nonlinear and nonlocal reactivity. The latter generalises the classical Robin condition for the case of a fixed boundary with constant reactivity. Via Skorokhod's M1 topology and a characterisation of the particles' behaviour near the boundary, we first identify the weak limit points of the empirical measure flows with killing. Then, we combine a probabilistic decoupling technique and energy estimates to prove uniqueness and deduce convergence. Our analysis gives a rigorous mean-field description of a model of epidemic spreading. Moreover, it contributes to the literature on inert drift systems and yields a novel mean-field perspective on the recent encounter-based framework for diffusion-mediated surface reaction from [Phys.~Rev.~Lett.~\textbf{125}~(2020)~078102].
\end{abstract}
	
\begin{adjustwidth}{0.6cm}{0.6cm}
\begin{flushleft}
\textbf{MSC}: Primary: 35R35, 60H10, 60H30, 60K35, 60F17; Secondary: 60J55, 82C22, 92D30.
			
\textbf{Keywords}: interacting diffusions, mean-field theory, reactive boundary, free boundary, encounter-dependent
reactivity, local time, transition densities, Aronson estimates.
\end{flushleft}
\end{adjustwidth}

\section{Introduction}

We study a system of $n$ real-valued particles that are diffusing in a constrained space determined by a moving boundary. Each particle is reflected back into the domain, but it may eventually react with the boundary and is then moved to a cemetery state. Whenever a particle collides with the boundary, the probability of a reaction depends both on the corresponding local time and the current reactivity of the boundary, as specified by a suitable mean-field statistic. The latter, in turn, depends on the realised reactions, and it is also this that drives the moving boundary. The system was introduced in \cite{fausti_soj_22} as a model for epidemic spreading. In that case, the reactions correspond to infection and the moving boundary represents the total advance of the epidemic.

As the central contribution of this work, we provide a unique characterisation of the mean-field limit for the non-Markovian particle system described above. Furthermore, our analysis yields novel representation formulas and associated estimates for the densities of interacting diffusion processes with a reactive boundary that interpolates nonlinearly between reflection and absorption.

In the mean-field limit, we obtain a novel free boundary problem whose boundary advances at a rate proportional to the density of mean-field particles immediately adjacent to it with a factor of proportionality that depends on the position of the free boundary and how strongly it has recently advanced. The mean-field density obeys a Fokker--Planck equation subject to a reactive (or mixed) boundary condition which governs the aforementioned advance of the free boundary.

\subsection{A mean-field model for the spreading of an epidemic}\label{sect:heuristics_particle}

In the epidemic model of \cite{fausti_soj_22}, each particle is represented by its level of \emph{shielding} from a disease. That is, $X_t^i$ describes how shielded the $i$'th particle is from being exposed to that disease. Away from the reach of the disease, each $X_t^i$ evolves according to given diffusive dynamics
\begin{equation*}
\di X_t^i = b(t,X_t^i)\dt + \sigma(t,X_t^i)\di B_t^i,
\end{equation*}
where $B^1,\ldots,B^n$ are independent Brownian motions. The spreading of the disease is modelled by a moving boundary $A^n$, which we refer to as the \emph{advancing front} of the epidemic. Initially, each particle $X^i$ reflects off the front $A^n$; however, the particles are at risk of infection whenever they collide with it. When a particle is at-risk in this way, it may become infected. The conditional probability of this occurring decomposes into (i) the extent to which the particle is at-risk, i.e., how extensively it is colliding with the front, and (ii) how likely it is, currently, that being at-risk leads to infection. We shall now specify the front and then the precise infection mechanism.

The advancing front $A^n$ grows continuously in proportion to the cumulative number of infected particles. This advance is modulated by an \emph{infection-to-recovery} kernel $\varrho$ that captures the way in which a newly infected particle gradually becomes more infectious before then starting to recover, hence ceasing to expand the total reach of the disease. Specifically,
\begin{equation}\label{eq:advancing_front_intro}
A^n_t = a_0 + \alpha \int_{t-\bar{d}}^t \varrho(t-s) I_s^n\ds\quad \text{with}\quad I^n_t=\frac{1}{n}\sum_{i=1}^n \mathbf{1}_{[0,t]}(\tau^i),
\end{equation}
where $\tau^i$ is the infection time of the $i$'th particle. We refer to $I^n$ as the \textit{infected proportion} and $\bar{d}$ as the \textit{duration of infectiousness}. Heuristically, $\tau^i$ is specified by the conditional probabilities
\begin{equation}\label{eq:elastic_infection_intro}
\Prob\bigl(\tau^i \in(t,t+h] \mid \ell^{i},C^n, \{t<\tau^i\} \bigr) \approx \int_t^{t+h} \gamma(s,C^n_s) \di \ell^i_s,
\end{equation}
for small $h>0$, at any time $t$, where $\ell^i$ is the local time of the collisions of $X^i$ with the front $A^n$. This expression decomposes the probability of infection into (i) the extent to which the $i$'th particle is at-risk, as measured by the local time increment $\di \ell^{i}_s$, and (ii) the effective `rate' of infection $\gamma(s,C_s^n)$ given that the particle is at-risk. Importantly, $\gamma$ depends not only on inherent properties of the disease, but also the \emph{current contagiousness} within the population $C^n$, defined by
\begin{equation}\label{eq:current_contagiousness_intro}
C^n_t = \int_{t-\bar{d}}^t \varrho(t-s) (I^n_s- I^n_{s-\bar{d}}) \ds = \frac{1}{\alpha}( A^n_t - A^n_{t-\bar{d}} ).
\end{equation}
This accounts for how many particles have recently been infected and how infectious they are; equivalently, how much the front has advanced by in the last $\bar{d}$ units of time. Of course, \eqref{eq:elastic_infection_intro} must be made precise, and we note that there is a two-way feedback between this and $A^n$, so care is needed in the construction of the particle system. The precise formulation is given in Section \ref{sect:particle_sys}. 

Looking ahead, a central finding will be that, in the mean-field limit, we have
\begin{equation}\label{eq:I_mean-field}
 I_t = \int_0^{t} v(s,A_s) \gamma(s,C_s)\sigma(s,A_s)^2\ds,
\end{equation}
where $v$ and $A$ solve the free boundary problem alluded to above. It follows that, in the mean-field limit, infections occur at a well-defined rate (per unit of time) which factorises as the product of (a) the density of susceptible individuals near the front of the epidemic, (b) the rate of infection given that these individuals are at-risk, and (c) the extent to which they are at-risk per unit of time, as given by the variability of their shielding levels near the front.

\subsection{Inert drift systems and encounter-based reactivity}\label{subsect:inert_and_encounter}

Beyond the epidemic model discussed above, our analysis contributes to two different strands of literature. Firstly, the moving boundary driven by the local time of collisions connects to the study of inert drift systems initiated by F.~Knight in \cite{Knight}. Secondly, the reactive killing mechanism that ultimately converts collisions into absorption events provides a new perspective on the notion of encounter-dependent boundary reactivity introduced by D.~Grebenkov in \cite{Grebenkov2020}.

Consider, first, the motion of an `inert', or `massive', Newtonian particle $S$ whose velocity is driven by collisions with a Brownian particle $Y$, through a transfer of momentum proportional to the local time of collisions. Since \cite{Knight}, various versions of this problem have attracted significant interest; see e.g.~\cite{White, banerjee_burdzy_duarte, banerjee_etal2, banerjee_brown, banerjee_etal}. Of particular relevance to our setting, \cite{barnes} studies the mean-field limit of a corresponding particle system with $n$ Brownian particles $Y^i$ impinging on the Newtonian particle $S^n$. This system is given by
\begin{equation}\label{eq:Barnes_system}
\begin{cases}
Y^{i}_t=Y^i_0 + B^{i}_t+\frac{1}{2}\ell^i_t,\quad Y_0^i > s_0,\\
S^n_t = s_0 - \alpha \int_{0}^t L_s^n\hspace{1pt}\ds,\quad L^n_t=\frac{1}{n}\sum_{i=1}^n \ell^{i}_t,
\end{cases}
\end{equation}
where $\ell^i$ is the local time of the collisions of $Y^i$ with $S^n$. Note that the specification of $S^n$ is similar to $A^n$ in \eqref{eq:advancing_front_intro} if we simplify to $A^n=a_0+\alpha \int_0^tI_s^n \ds$. However, distinct from \eqref{eq:Barnes_system}, our system involves two separate, but interacting, time scales: a faster time scale for the accumulation of local time and a slower time scale for the reaction events that ultimately advance the boundary. Whilst our killing mechanism \eqref{eq:elastic_infection_intro} distinguishes the two systems, the separation of timescales itself should become less visible for a large number of particles. Indeed, \cite[Thm.~2.5]{fausti_soj_22} gives
\begin{equation}\label{eq:approx_In_asymptotic}
I^n_{t} \approx \frac{1}{n}\sum_{i=1}^{n}\int_0^{t}\mathbbm{1}_{s<\tau^i}\gamma(s,C_s^n) \di \ell^{i,n}_s,
\end{equation}
in a suitable sense, for large $n$. Thus, aside from the critical reactivity aspect, the discontinuous evolution of $I^n$ is in fact comparable to that of $L^n$ from \eqref{eq:Barnes_system} in a way that should become precise in the mean-field limit. By \cite[Thm.~2.1]{barnes}, the mean-field limit of \eqref{eq:Barnes_system} is characterised by the free boundary problem
\begin{equation}\label{eq:Barnes_PDE}
\left\{ \begin{array}{@{}l@{}l}
\partial_t p(t,x) = \frac{1}{2}\partial_{xx} p(t,x), \quad (t,x)\in (0,\infty)\times (S_t,\infty), \\[3pt]
\frac{1}{2}\partial_x p(t,S_t) = -S^\prime_t p(t,S_t),\quad S^{\prime\prime }_t=-\alpha p(t,S_t).
\end{array} \right.
\end{equation}
This amounts to the limit $L$ of $L^n$ in \eqref{eq:Barnes_system} increasing at the rate $ L^\prime_t=p(t,S_t)$, much like the limit $I$ of $I^n$ will  increase at a rate proportional to the corresponding mean-field density at the interface, as postulated in \eqref{eq:I_mean-field}. However, the rate $I^\prime_t$ will involve the nonlinear proportionality factor $\gamma(s,C_s)\sigma(s,A_s)^2$, while this is unity for $L^\prime_t$. Moreover, the boundary condition in \eqref{eq:Barnes_PDE} simply serves to enforce zero flux, while a key aspect of the limiting free boundary problem in our setting will be the particular reactive boundary condition that governs the flux.

We now turn to the connection between our killing mechanism \eqref{eq:elastic_infection_intro} and the recent `encounter-based' framework for diffusion-mediated boundary reactions of \cite{Grebenkov2020}. Several problems have been studied within this framework; see e.g.~\cite{Grebenkov2022, Bressloff2022, Bressloff2022spectral, Bressloff2024}. Working with reflected Brownian motion, \cite{Grebenkov2020} introduced the notion of an `encounter-dependent' reactivity $\kappa(l)$ and its associated survival function $\Psi(l)=\exp(-\int_0^l \kappa(\lambda)\di \lambda)$, giving the probability of no surface reaction up to a level $l$ of the local time at the boundary (modulo constants). Letting $Y$ denote the reflected Brownian motion, $\ell$ its local time, and $\tau$ the reaction (or killing) time, we get
\begin{equation}\label{eq:Grebenkov}
\Prob(\tau \leq t \mid Y)=1-\Psi(\ell_t)=1-\exp\Bigl(-\int_0^t\kappa(\ell_s)\di \ell_s\Bigr).
\end{equation}
This can be seen as a direct extension of elastic Brownian motion \cite{mckean_local} where the coefficient $\kappa$ is now a function of the local time, and it reveals the connection to \eqref{eq:elastic_infection_intro}. Since $\kappa$ only depends on $\ell_s$ in \eqref{eq:Grebenkov}, the probability of surface reaction by time $t$ is a function of $\ell_t$ alone, unlike \eqref{eq:elastic_infection_intro}.

Recently, \cite{Bressloff2024} studied interacting Brownian gas on the half-line with the encounter-dependent notion of boundary reactivity, deriving the hydrodynamic limit under a mean-field ansatz. The memory trace of encounters with the (fixed) boundary is taken to be internal to each particle so that the exact specification \eqref{eq:Grebenkov} applies independently across the particles. It is noted that an alternative would be to let the memory trace be inherent to the boundary, but that this \emph{`significantly complicates the analysis of the multi-particle Brownian gas, since the probability that any one particle is absorbed will depend on previous interactions between the boundary and all other particles'} \cite[p.~18]{Bressloff2024}. Exact details are not discussed, but a natural setup would amount to letting the reaction times $\tau^i$ be determined via a common reactivity $\kappa(\tilde{L}^n_t)$, where $\tilde{L}_t^n:=\frac{1}{n}\sum_{i=1}^n \ell^i_{t\land \tau^i}$, in place of individual values $\kappa(\ell_t^i)$. Akin to \eqref{eq:elastic_infection_intro}, the common variable can also be $\tilde{I}^n_t:=\frac{1}{n}\sum_{i=1}^n \mathbf{1}_{\tau^i\leq t}$. Concerning the examples discussed in \cite{Grebenkov2020}, dependence on $\tilde{I}^n$ could, e.g., describe membrane pore blocking, where each particle that reacts at a pore obstructs it and thus reduces overall reactivity, while dependence on $\tilde{L}^n$ would be appropriate for describing, e.g., catalytic surface activation (or passivation), where repeated contact with the boundary increases (or decreases) overall reactivity. Both cases can be analysed with our methods, thus allowing for a rigorous characterisation of the associated mean-field limits and addressing the difficulties flagged in \cite{Bressloff2024}. With regard to the latter, we note that the particles in \cite{Bressloff2024} interact via their drifts. As this depends on the empirical measure in a regular way, it could readily be incorporated in our analysis, but we choose to focus only on the more challenging interactions via the free boundary and its reactivity. 

Finally, we note the following, which was stressed
in \cite{Bressloff2024}: unless $\Psi$
in~\eqref{eq:Grebenkov} is exponential (in which case the classical Robin boundary condition is recovered, in agreement with that special case of our problem), neither the single-particle setting of \cite{Grebenkov2020} nor the mean-field limit of \cite{Bressloff2024} admits a `closed equation' for the density of surviving particles. Indeed, the reactive boundary condition requires the joint law of a representative particle's position and local time. By contrast, our mean-field limit resolves this, as the law of large numbers relates the collective collision or absorption statistics to the mean-field density at the boundary.

\section{Free boundary problem in the mean-field limit}\label{sect:main_results}

In this section, we first give the precise formulation of the particle system and the assumptions we work under. Then, we introduce the free boundary problem which will characterise the mean-field limit, and we present our main results on the convergence to this limit and its well-posedness. Finally, we discuss some of the central technical aspects in relation to the existing literature.

\subsection{The interacting particle system}\label{sect:particle_sys}

We first specify the filtered probability spaces on which we shall be working. We take as given a probability space $(\Omega, \Prob , \mathcal{F})$ that supports the random inputs we introduce below, for all $n\geq 1$.

As in \cite[(3.1)]{burdzy_nualart}, we let the local time $\ell$ of a continuous semimartingale $X$ along a (random) continuous curve $t \mapsto h(t)$ be defined by
\begin{equation}\label{eq:local_time_defn}
\ell_t = \lim_{\varepsilon \rightarrow 0} \frac{1}{\varepsilon} \int_0^t \mathbbm{1}_{[h(s), h(s) + \varepsilon)} (X_s) \di \langle X \rangle_s.
\end{equation}
Following \cite{fausti_soj_22}, we introduce the filtrations $(\mathcal{F}_t^n)_{t\geq 0}$ defined by
\begin{equation}\label{eq:full_filtration}
\mathcal{F}^n_t:= \sigma\bigl({X_0^j,B_s^j, \{\chi^{j,(k)}\}_{k=1}^n } : s\in[0,t], \,j=1,\ldots,n  \bigr),\quad t\geq0,
\end{equation}
where $\{ X_0^j \}_{j=1}^n$ are the starting points, which may be constants or a family of independent random variables, $\{B^j\}_{j=1}^n$ is a family of independent standard Brownian motions, and $\{\{\chi^{j,(k)}\}_{j=1}^n\}_{k=1}^n$ is a family of independent standard Exponential random variables (utilized in the specification of the infection times). All three families are taken to be independent of each other. 

Given these inputs, the particle system $\mathbf{X}^n=(X^{1,n},\dots,X^{n,n})$ takes the form
	\begin{equation}\label{eq:sys_X}
	\left\{ \begin{array}{@{}l@{}l}
	\di X_t^{i,n} = b(t, X_t^{i,n}) \dt + \sigma (t, X_t^{i,n}) \di B^i_t + \tfrac{1}{2} \di \ell_t^{i,n}, & \quad t \in [0, \tau^i), \vspace{3pt}\\
	A_t^{n} = a_0 + \alpha\int_{t-\bar{d}}^t \varrho(t-s) I^{n}_{s} \ds, & \quad t \ge 0, \vspace{4pt}\\ 
	I_t^{n} = \frac{1}{n} \sum_{j=1}^n \mathbbm{1}_{[0,t]}(\tau^j), & \quad t \ge 0, \vspace{4pt}\\ 
	X_{t}^{i,n} = \dagger, &\quad t \ge \tau^{i}, 
	\end{array} \right.
\end{equation}
with initial conditions $X_0^{i,n} = X^i_0$,	where $\mathbf{X}^n_t$ lives in $\left([A_t^n, \infty) \cup \{ \dagger \} \right)^n$, $\ell^{i,n}$ is the local time of $X^{i,n}$ along the front $A^n$ (supported on the set $\{t\in [0,\tau^i) : X_t^{i,n} =A^n_t \}$), and the infection times $\tau^i = \inf \{t \ge 0 \, : \, X^{i,n}_t = \dagger \}$ satisfy
\begin{equation}\label{eq:tau_i_prob}
	\Prob (\tau^i \le t \mid \hat{\mathcal{F}}^{i,n}_t) = 1 - \exp \left\{ - \int_0^t \gamma(s, \hat{C}^{n, (-i)}_s) \di \hat{\ell}_s^{i,n} \right\},\quad t\geq 0,
\end{equation}
with
\begin{equation}\label{eq:reduced_filtration}
	\hat{\mathcal{F}}^{i,n}_t:= \sigma \bigl( (X_0^i,B_s^i), (X_0^j,B_s^j, \{\chi^{j,(k)}\}_{k=1}^n)  : s\in[0,t] , j\in \{ 1,\ldots,n  \}\! \setminus \! \{i\}    \bigr).
\end{equation}
Regarding the right-hand side of \eqref{eq:tau_i_prob}, for every $i$, $\mathbf{\hat{X}}^{n,(-i)}=(\hat{X}^{1,n,(-i)},\ldots,\hat{X}^{n,n,(-i)})$ denotes the auxiliary particle system where everything is specified as for $\mathbf{X}^{n}$ except that the $i$'th particle $\hat{X}^{i,n,(-i)}$ is fully reflected and immune (hence having no impact on the other particles). Within that system, $\hat{\ell}^{i,n}$ denotes the local time of $\hat{X}^{i,n,(-i)}$ along the corresponding front $\hat{A}^{n, (-i)}$, defined in terms the infected proportion $\hat{I}^{n,(-i)}$, and likewise $\hat{C}^{n, (-i)}$
is the current contagiousness defined as in \eqref{eq:current_contagiousness_intro} but in terms of $\hat{I}^{n,(-i)}$. For details on this, we refer to \cite{fausti_soj_22}.

\begin{assumption}[Structural conditions]\label{assump:coefficient_assumptions} Beyond measurability, we work under the following conditions on the coefficients of the particle system for $(t,x)\in[0,T]\times \R$, for any given $T>0$:
\begin{itemize}
\item $b(t,x)$ is Lipschitz continuous in $x$ with at most linear growth  in $x$, uniformly in $t\in[0,T]$,
\item $\sigma(t,x)$ is non-degenerate, bounded, and Lipschitz continuous jointly in $(t,x)$,
\item $\gamma(t,x)$ is Lipschitz continuous in $x$, bounded away from zero, and $C^1$ jointly in $(t,x)$,
\item $\varrho(t)$ is supported on $[0, \bar{d}]$, non-negative and right-continuous with $\varrho(0)=0$ and $\Vert \varrho \Vert_{L^1(0,\bar{d})}=1$. Moreover, it is of bounded variation with $\int_0^{\bar d} r^{-1}\Vrho{r}\,\mathrm{d}r < \infty$.
\end{itemize}
\end{assumption}

With these assumptions, we can work with the solution $\mathbf{X}^n$ to \eqref{eq:sys_X}--\eqref{eq:reduced_filtration} constructed in \cite[Theorem~2.3]{fausti_soj_22}. This is adapted to the filtration $\mathcal{F}^n$ from \eqref{eq:full_filtration} and the construction is uniquely determined by the random inputs. Finally, we enforce the following for the mean-field analysis.

\begin{assumption}[Initial conditions] \label{assump:ic_assumptions} The initial values $\{X_0^i\}_{i\geq 1}$ satisfy $\E[\exp(c(X^{i}_0)^2)]<\infty$ for all $i\geq 1$, for some $c>0$. Moreover, as $n\rightarrow \infty$, the corresponding empirical measures
\[
P_0^n:= \frac{1}{n}\sum_{i=1}^n \delta_{X_0^i}
\]
tend to $P_0\in \mathcal{P}([a_0,\infty))$, in the sense of weak convergence of measures, with $\int_{a_0}^\infty e^{cx^2}\di P_0(x)<\infty$, for some $c>0$. Here, $\mathcal{P}(S)$ denotes the space of probability measures on a given space $S$.
\end{assumption}

\subsection{Free boundary problem}\label{sect:free_boundary}

The mean-field limit of the particle system will be given by a moving boundary $A$ and a sub-probability density $v(t,x)$ on $[A_t,\infty)$, for $t>0$, where the pair $(v,A)$ is the unique solution to a free-boundary problem. A classical solution to this problem would amount to
\begin{equation}\label{eq:classic_PDE}
\partial_{t}v(t,x)=\displaystyle\frac{1}{2}\partial_{xx}\bigl(\sigma^{2}v\bigr) (t,x) -\partial_{x}(bv) (t,x),\quad (t,x)\in[0,\infty)\times [A_t,\infty),
\end{equation}
with the reactive (or mixed) boundary condition
\begin{equation}\label{eq:classic_BC}
{\frac{1}{2} \, \partial_{x} \big( \sigma^{2}} v \big) (t,A_{t}) = \big( \sigma(t,A_{t})^{2} \gamma(t,C_{t})+ b(t,A_{t})-A_{t}^{\prime}\big) \, v(t,A_{t}),\quad t\in[0,\infty),
\end{equation}
where $C_t = \int_{t-\bar{d}}^{t}\varrho(t-s)(I_{s}-I_{s-\bar{d}}) \ds$, subject to
\begin{equation}\label{eq:classic_A_I_rate}
A^\prime_{t}=\alpha \displaystyle \int_{t-\bar{d}}^{t}\varrho(t-s)I^\prime_{s} \ds \quad \text{with}\quad  I^\prime_{t}=\displaystyle v(t,A_{t})\gamma(t,C_{t})\sigma(t,A_{t})^{2},\quad t\in[0,\infty),
\end{equation}
for given initial conditions $v(0,\cdot)=v_0$, $A_0=a_0$, and $I_0=0$. Here, it is understood that $I_s=0$ for $s< 0$, and we note that the spatial derivative of $v$ at the boundary $x=A_t$ and the time derivatives at $t=0$ should of course be understood as one-sided derivatives from the right.

Notice that, if $C_t$ is causing $\gamma(t,C_t)$ to be small, then \eqref{eq:classic_BC} will be close to a fully insulated boundary condition, and so the front $A$ will only move slowly, as can also be seen from \eqref{eq:classic_A_I_rate}. On the other hand, large values of $\gamma(t,C_t)$ bring \eqref{eq:classic_BC} close to a fully absorbing boundary condition. Finally, we note that \eqref{eq:classic_BC}--\eqref{eq:classic_A_I_rate} enforce conservation of mass in the sense that
\[
I_t+\int_{A_t}^\infty v(t,x) \dx =1.
\]

For our analysis, we shall work with the following weak formulation. Given sufficient smoothness of the solution and its coefficients, along with some decay at infinity, it would follow from integration by parts that this weak formulation agrees with the classical formulation above. As initial conditions, we fix any $(P_0,a_0)\in \mathcal{P}([a_0,\infty))\times \R^+$ satisfying Assumption \ref{assump:ic_assumptions}.

\begin{defn}[Weak formulation]\label{def:weak_formulation} Let $(v,A)$ be such that (i) $t\mapsto A_t$ is continuous in $t\geq 0$, (ii) $v\geq 0$, (iii)  $(t,x) \mapsto v(t,x)$ is continuous jointly in $t>0$ and $x\geq A_t$, and (iv) $v(t,\cdot)\in L^1([A_t,\infty))$ and $v(\cdot,A_{\cdot})\in L^1([0,t])$ for all $t>0$. Let also $A_0=a_0$ and $v(t,x)\dx \rightarrow P_0$ as $t\downarrow 0$ in the topology of weak convergence of measures. Then, we say that $(v,A)$ is a weak solution of the free boundary problem \eqref{eq:classic_PDE}--\eqref{eq:classic_A_I_rate} with initial condition $(P_0,a_0)$ if 
\begin{align}\label{eq:the_weak_formulation}
	&\int_{A_t}^\infty \!\! \phi(x)v(t,x)\dx - 	\int_{a_0}^\infty \!\! \phi(x) \di P_0(x) = \int_0^t\! \int_{A_s}^\infty \!\! \partial_{xx}\phi(x)\frac{1}{2}\sigma(s,x)^2v(s,x)\dx \ds \nonumber \\
	&+ \int_0^t \!\int_{A_s}^\infty \!\!\partial_{x}\phi(x)b(s,x)v(s,x) \dx \ds + \int_0^t  \bigl( \tfrac{1}{2} \partial_x\phi(A_s)-\gamma(s,C_s)\phi(A_s)\bigr)\sigma(s,A_s)^2v(s,A_s) \ds ,
\end{align}
for all $t\geq 0$ and all $\phi \in C^2_b(\R)$, with
\begin{equation}\label{eq:weak_A_C_I}
	A_t=a_0 + \alpha \! \displaystyle \int_{t-\bar{d}}^{t}\varrho(t-s)I_{s} \ds, \;\;
	C_t = \!\int_{t-\bar{d}}^{t}\varrho(t-s)(I_{s}-I_{s-\bar{d}}) \ds, \;\;  I_t=1-\!\int_{A_t}^\infty \!\!\! v(t,x) \dx,
\end{equation}
for all $t>0$.
\end{defn}

Note that we can set  $I_0:=\lim_{t\downarrow 0}I_t=0$. The last term on the right-hand side of \eqref{eq:the_weak_formulation} encodes the boundary condition. It is well-defined by virtue of the continuity of $v$ at the boundary and the integrability of $t\mapsto v(t , A_t)$. Moreover, taking the test function $\phi\equiv1$, it also follows from the continuity of $t\mapsto v(t,A_t)$ for $t>0$ that $A$, $C$, and $I$ are continuously differentiable at positive times. In turn, we still have the condition \eqref{eq:classic_A_I_rate} for $t>0$.

\begin{rmk}[Shape of the moving boundary] We have $A_t=F(\int_{t-\bar{d}}^t \varrho(t-s) I_{s} \ds) $ with $F(x)=a_0 + \alpha x$. For a general $F:\R\rightarrow \R$, the arguments in Sections \ref{sect:density_perturb_and_bounds}--\ref{sect:decoupled_mf} continue to apply as long as $F\in W^{1,\infty}(\R)$, and the arguments in Section \ref{sect:free_boundary_uniqueness} go through with minimal changes if $F\in W^{2,\infty}(\R)$.
\end{rmk}

\subsection{Well-posedness and mean-field convergence}

Below, we summarise our main results on the well-posedness of the free boundary problem and the weak convergence of the empirical measures associated to the particle system.

\begin{thm}[Well-posedness]\label{thm:free_boundary_problem} Let Assumptions \ref{assump:coefficient_assumptions} and \ref{assump:ic_assumptions} be in place. There exists a unique weak solution $(v,A)$ to the free boundary problem given by Definition \ref{def:weak_formulation}. Moreover, for any $T>0$, we have
\[
 v(t,x) \leq \frac{c_1 }{\sqrt{t}} \exp\bigl( - c_2(x-A_t)^2 \bigr),\quad \text{for all}\quad x\in[A_t,\infty), \,t\in(0,T] ,
\]
for some constants $c_1,c_2>0$.
\end{thm}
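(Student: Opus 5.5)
Existence comes from the particle system, and uniqueness from a decoupled probabilistic representation combined with energy estimates.

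\textbf{Existence.} I first show tightness of the laws of the empirical measures of the killed particles, together with the pair $(I^n,A^n)$. I would use Skorokhod's M1 topology for $I^n$, since it is monotone and jumps by $1/n$. Along a subsequence I pass to a limit $(\mu,I,A)$. Because of the convolution with $\varrho$, the front $A$ is continuous.

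I then identify the limit as a McKean--Vlasov-type process. A representative particle $X$ is reflected along the continuous curve $A$ and killed at rate $\gamma(s,C_s)\,\mathrm{d}\ell_s$. For this step I need continuity of the map (path of $A$, Brownian path) $\mapsto$ (reflected path, local time). I also need \eqref{eq:approx_In_asymptotic}, from \cite[Thm.~2.5]{fausti_soj_22}, to replace the jump-driven $I^n$ by the compensator $\frac1n\sum_i\int \mathbbm{1}_{s<\tau^i}\gamma\,\mathrm{d}\ell^i$. In the limit this gives
\[
I_t=\E\Bigl[\int_0^t \mathbbm{1}_{s<\tau}\gamma(s,C_s)\,\mathrm{d}\ell_s\Bigr].
\]

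Next I fix the continuous curve $A$ and the function $C$, and study the linear problem. For a diffusion reflected along a Lipschitz or $C^1$ curve with bounded killing rate on the boundary, I would build the transition density via a parametrix or perturbation argument. The construction starts from the density of the process reflected along the curve and perturbs by the killing, in the spirit of Feynman--Kac. This yields the Aronson-type Gaussian upper bound
\[
v(t,x)\le c_1 t^{-1/2}e^{-c_2(x-A_t)^2},
\]
together with joint continuity up to the boundary. The Gaussian moment in Assumption \ref{assump:ic_assumptions} handles the initial law.

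Itô's formula for $\phi(X_t)\mathbbm{1}_{t<\tau}$ gives \eqref{eq:the_weak_formulation}. The occupation-density identity
\[
\E[\mathrm{d}\ell_s;\,s<\tau]=\sigma(s,A_s)^2v(s,A_s)\,\mathrm{d}s
\]
converts the local-time terms into the boundary integral. The condition $\int r^{-1}\Vrho{r}\,\mathrm{d}r<\infty$ is what makes $A$ regular enough, roughly $C^1$ with a controlled modulus, for the reflected-density estimates to hold.

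\textbf{Uniqueness.} Let $(v,A)$ be any weak solution. Its front $A$ is $C^1$ for $t>0$, as noted after Definition \ref{def:weak_formulation}. I show that $v$ coincides with the density of the decoupled killed reflected process driven by that same $(A,C)$. This uses uniqueness for the linear PDE with a given boundary, proved by duality: solve the backward Kolmogorov equation with the adjoint Robin condition, using the estimates above. So every solution has the above representation and the Gaussian bound.

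Now take two solutions $(v^1,A^1)$ and $(v^2,A^2)$. I shift coordinates $y=x-A^i_t$ so both live on the half-line. I then derive an energy or $L^1$-type estimate for the difference of the densities, and in particular for the boundary values $v^i(s,A^i_s)$, in terms of
\[
\sup_{s\le t}|I^1_s-I^2_s|.
\]
The resulting inequality has the form
\[
\|I^1-I^2\|_{\infty,[0,t]}\le \kappa(t)\,\|I^1-I^2\|_{\infty,[0,t]},\qquad \kappa(t)\to0 \text{ as } t\to0.
\]
This gives uniqueness on a short interval, and iterating extends it to all times. Convergence of the whole sequence then follows from uniqueness of limit points.

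\textbf{Main obstacle.} The hardest step is controlling $|v^1(s,A^1_s)-v^2(s,A^2_s)|$. Boundary values are not controlled by the $L^2$ energy alone. I would try to bound them through the transition density representation: differentiate the reflected heat kernel with respect to the boundary path, and use a Gronwall argument with the integrable singularity $s^{-1/2}$ from the Aronson bound. The $W^{2,\infty}$ shape remark indicates that second derivatives of the transformed drift enter here.
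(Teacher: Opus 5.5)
There is a genuine gap, and it sits where you place your ``main obstacle''. Your nonlinear uniqueness argument rests on a sup-norm contraction for $I^1-I^2$, which requires comparing the boundary values $v^1(s,A^1_s)-v^2(s,A^2_s)$. You only say you \emph{would try} to do this by differentiating the reflected kernel in the boundary path. Under Assumption~\ref{assump:coefficient_assumptions} this route is not available as sketched. In the Lamperti frame used for such Volterra representations, the drift $\tilde b$ contains $\partial_x\sigma$ evaluated at front-dependent points, so comparing two fronts needs $\partial_x\sigma$ to be Lipschitz. Without the Lamperti transform, you would instead need derivative bounds on the densities. You have neither.

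The missing idea is that boundary values never need to be compared. The paper runs the energy estimate on the mollified \emph{distribution function} $F^{\Delta,\varepsilon}_t$ of $\Delta_t=\mu_t-\nu_t$, weighted by $\gamma(t,C_t)$ so that the flux $\sigma_A(s,0)^2u_s(0)$ becomes $I'_s$. The flux term is then $\int_0^t\!\int_0^\infty p_\varepsilon(x)F^{\Delta,\varepsilon}_s(x)\dx\,\di(I-\tilde I)_s$. Since $F^\Delta_s(0)=\tilde I_s-I_s$, this tends to $\int_0^t(\tilde I_s-I_s)\di(I_s-\tilde I_s)=-\tfrac12(I_t-\tilde I_t)^2\le 0$. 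The only boundary quantity left is the cross term $(\gamma(s,\tilde C_s)-\gamma(s,C_s))\sigma_{\tilde A}(s,0)^2v_s(0)(\tilde I_s-I_s)$, which needs only the pointwise bound $v_s(0)\le c_1s^{-1/2}$. All remaining $|I-\tilde I|^2$ terms are absorbed into the dissipation $-\kappa_\sigma\|f^\Delta_t\|_2^2$ and a Gr\"onwall term via $|I_t-\tilde I_t|^2\le\frac{\kappa^2}{\delta}\|F^\Delta_t\|_2^2+2\delta\|f^\Delta_t\|_2^2$.

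This is also where $\int_0^{\bar d}r^{-1}\Vrho{r}\,\mathrm{d}r<\infty$ enters: it gives $\Phi\in L^2$ when controlling $A'-\tilde A'$. It is not needed for regularity of $A$, since $|A'|\le\alpha\|\varrho\|_\infty$ holds automatically because $I$ is nondecreasing and bounded by $1$. Likewise, the $W^{2,\infty}$ remark concerns the shape map $F$ in $A_t=F(\int_{t-\bar d}^t\varrho(t-s)I_s\ds)$, needed to control $A'-\tilde A'$. It says nothing about second derivatives of a transformed drift.

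Your existence step has a second gap. The identity $I_t=\E[\int_0^t\mathbbm{1}_{s<\tau}\gamma(s,C_s)\di\ell_s]$ does not follow from continuity of the Skorokhod map together with \eqref{eq:approx_In_asymptotic}, for three reasons. First, limit points $(\mu,I,A)$ are a priori random, and $A^n$ depends on every particle. Second, $\mathbbm{1}_{s<\tau^i}$ is not a continuous path functional. Third, you must show that a representative particle's noise is independent of the random limiting front, and you do not.

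The paper gets around this in three steps. It characterises $Z^{n,\phi}$ through a martingale argument. It then interchanges $\varepsilon\downarrow0$ and $n\to\infty$ using Aronson bounds for each particle \emph{conditional on} $\mathcal G^{i,n}$, which yields a randomised weak solution. Finally, it builds a decoupled particle conditionally on $\mathcal F^\mu_T$ and proves $\mu=\nu$ almost surely by a linearised energy estimate, using the same sign mechanism $-\tfrac14\E[(I^\mu-I^\nu)^2]$.

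Your duality argument for the linear problem is a plausible alternative at that stage. However, it needs $C^{1,2}$ solutions of the backward Robin problem up to a merely $C^1$ moving boundary with Lipschitz coefficients, and you do not establish these. The paper instead reuses the energy argument, controlling the remainder only in $L^1$.
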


 As in the classical works \cite{Ito_S_prime, Mitoma_S_prime}, we view the empirical measures $(P^n_t)_{t\geq 0}$ associated to \eqref{eq:sys_X} as $\mathcal{S}^\prime$-valued c\`adl\`ag stochastic processes given by
\begin{equation}\label{eq:emprical_measures}
\phi \mapsto  \langle P^n_t,\phi \rangle := \frac{1}{n}\sum_{i=1}^n \mathbf{1}_{t<\tau^i}\phi(X^{i,n}_t),\quad \phi\in \mathcal{S},
\end{equation}
where $\mathcal{S}$ denotes the Schwartz space on $\R$ and  $\mathcal{S}^\prime$ its dual, the space of tempered distributions. We then write $D_{\mathcal{S}^\prime}[0,T]$ for the space of $\mathcal{S}^\prime$-valued c\`adl\`ag stochastic processes on $[0,T]$ and we will show that there is weak convergence on this space for all $T>0$.

\begin{thm}[Mean-field limit]\label{thm:summary_contributions} Let Assumptions \ref{assump:coefficient_assumptions} and \ref{assump:ic_assumptions} be in place. Let $(v,A)$ be the unique weak solution to the free boundary problem given in Definition \ref{def:weak_formulation}, and let the empirical measures $P^n$ be defined as in \eqref{eq:emprical_measures}. For any $T>0$, we have 
\[(P^n_t,A^n_t)_{t\in[0,T]} \Rightarrow (\nu_t,A_t)_{t\in[0,T]},\quad\text{as}\quad n\rightarrow \infty,
\] on $D_{\mathcal{S}^\prime}[0,T]\times D_{\R}[0,T]$ in the uniform topology, where $\nu_0=P_0$ and $\nu_t=v(t,x)\hspace{1pt}\dx$ for $t>0$.
\end{thm}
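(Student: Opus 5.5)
The plan follows the classical tightness / identification / uniqueness scheme. Theorem~\ref{thm:free_boundary_problem} supplies uniqueness of the limit. The work is to show that every subsequential limit of $(P^n,A^n)$ is deterministic and coincides with $(v,A)$.

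\emph{Step 1: tightness.} I would prove tightness of $(P^n,A^n)$ on $D_{\mathcal{S}'}[0,T]\times D_{\R}[0,T]$, working with the pair $(P^n,I^n)$. By Mitoma's criterion it suffices to show tightness of the real processes $\langle P^n,\phi\rangle$ for each $\phi\in\mathcal{S}$. Apply It\^o's formula to $\phi(X^{i,n}_t)$ up to $\tau^i$ and use the compensator of $\mathbf{1}_{t\ge\tau^i}$ from \eqref{eq:tau_i_prob}, which is $\int\gamma\,\di\ell^{i,n}$ with $\hat C$ replaced by $C^n$ up to a vanishing error via \cite[Thm.~2.5]{fausti_soj_22}. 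This gives a semimartingale decomposition whose drift involves $\frac1n\sum_i\int \mathbf{1}_{s<\tau^i}(\tfrac12\phi'(A^n_s)-\gamma(s,C^n_s)\phi(A^n_s))\,\di\ell^{i,n}_s$, and whose martingale part has quadratic variation $O(1/n)$. The jumps of $I^n$ have size $1/n$, so a priori the relevant topology is M1. I would use M1 tightness of $I^n$: it is monotone, which makes M1 tightness essentially free. Then $A^n$, being a $\varrho$-smoothing of $I^n$, is tight in the uniform topology thanks to the bounded-variation assumption on $\varrho$. Tightness of the averaged local-time terms follows from uniform moment bounds on $\frac1n\sum_i\ell^{i,n}_T$, obtained from the Skorokhod representation and the Lipschitz bound on $A^n$.

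\emph{Step 2: identification.} Along a subsequence, pass to an almost surely convergent realisation $(P^n,I^n,L^n)\to(\nu,I,L)$, with $L^n$ the averaged killed local time. The key difficulty, which I expect to be the main obstacle, is to show that the limiting boundary terms are absolutely continuous in time with density $\sigma(s,A_s)^2 v(s,A_s)$. In other words, $\frac1n\sum_i\mathbf{1}_{s<\tau^i}\di\ell^{i,n}_s\to\sigma^2 v(s,A_s)\ds$, and this requires that $\nu_t$ has a continuous density up to the moving boundary.

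To get there, I would first show that, conditionally on the limiting boundary path, the limiting $\nu$ is the law of a single reflected diffusion killed at rate $\gamma(s,C_s)\di\ell_s$. This is a McKean-type decoupling: freeze $(A,C)$ from the limit and use independence of each particle from the common variables up to $O(1/n)$ via the $\hat{\mathbf X}^{n,(-i)}$ systems. For such a linear killed diffusion with a Lipschitz boundary, Aronson-type Gaussian bounds and continuity of its density give $\nu_t=v(t,x)\dx$ together with the estimate in Theorem~\ref{thm:free_boundary_problem}. The occupation-time formula then gives
\[
\E\Bigl[\int_0^t \mathbf{1}_{s<\tau}\,\di\ell_s\Bigr]=\int_0^t\sigma(s,A_s)^2v(s,A_s)\ds.
\]
A law of large numbers for the $n$ nearly independent copies transfers this to $L^n$. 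Here one must handle the local time near the boundary with $\varepsilon$-approximations of \eqref{eq:local_time_defn} uniformly in $n$, using the density bounds to control the error.

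Passing to the limit in the It\^o decomposition, the martingale vanishes. Using continuity of $\phi$ at the M1-continuity points (all but countably many $t$), we obtain \eqref{eq:the_weak_formulation} and \eqref{eq:weak_A_C_I} for $(\nu,A)$ almost surely. Hence $I$ is continuous, so M1 convergence upgrades to uniform convergence. By uniqueness in Theorem~\ref{thm:free_boundary_problem}, the limit is the deterministic pair $(v,A)$. Since the limit is deterministic and every subsequence has a further subsequence converging to it, the whole sequence converges in law. Uniform convergence of $P^n$ in $D_{\mathcal{S}'}$ follows from the continuity of $t\mapsto\nu_t$ together with the uniform convergence of each $\langle P^n,\phi\rangle$.
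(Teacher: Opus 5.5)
There is a genuine gap in Step 2, and it is where the theorem's difficulty lies. You identify the limit by claiming that, conditionally on the limiting front, $\nu$ is the law of a single decoupled killed diffusion. Your justification is ``independence of each particle from the common variables up to $O(1/n)$ via the $\hat{\mathbf{X}}^{n,(-i)}$ systems'' plus ``a law of large numbers for the $n$ nearly independent copies''. Neither is available.

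The auxiliary system $\hat{\mathbf{X}}^{n,(-i)}$ agrees with $\mathbf{X}^n$ only on $[0,\tau^i)$. After $\tau^i$, the extra $1/n$ jump of $I^n$ moves the front and perturbs every other local time. Because killing is of threshold type, this can switch other killing times, which feeds back again. So $\hat A^{i,n}$ and $A^n$ are not a priori $O(1/n)$-close on $[0,T]$, and you give no stability estimate controlling this cascade.

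More importantly, a one-particle identity such as $\E[\phi(X^{i,n}_t)\mathbf{1}_{t<\tau^i}\mid\mathcal{G}^{i,n}_T]=\int\phi\,p^{i,n}(t,x)\,\mathrm{d}x$ only identifies conditional means. To conclude that the random measure $\nu_t$ \emph{equals} the decoupled law given $A$, and that $\frac1n\sum_i\mathbf{1}_{s<\tau^i}\mathrm{d}\ell^{i,n}_s\to\sigma(s,A_s)^2v(s,A_s)\,\mathrm{d}s$, you need a conditional law of large numbers. That means decorrelation of pairs of particles given the common front, which you assert but do not supply. As written the argument is circular: you need a density for $\nu$ to identify the boundary term, and you need the identification to get the density.

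The paper avoids any propagation-of-chaos estimate. It first characterises the limiting boundary term only weakly. It uses the martingale $M^n$ in \eqref{eq:infected_martingale}, the occupation-time approximation with $\psi_\varepsilon$, and the conditional densities $p^{i,n}$ of Theorem~\ref{thm:perturb_density}. These are uniformly continuous at the front uniformly in $n$, and the argument only uses that $\hat{\mathbf{X}}^{n,(-i)}$ and $\mathbf{X}^n$ coincide up to time $s$ on $\{s<\tau^i\}$. With these it interchanges $\varepsilon\downarrow0$ and $n\to\infty$ inside expectations against adapted $H$. This yields the randomized measure-valued solution of Proposition~\ref{prop:rand_weak_solution}, with no density assumed.

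It then builds the decoupled particle from the limit's own $(A,C)$ (Proposition~\ref{prop:linearised_problem_nu}). It proves $\mu=\nu$ almost surely by a uniqueness theorem for the \emph{linear} problem among such weakly characterised solutions (Theorem~\ref{thm:uniqueness_linear}). That proof uses $L^2$ energy estimates on the Dirichlet-heat-kernel-mollified distribution functions $F^{\Delta,\varepsilon}$. The boundary contribution has a favourable sign (Lemma~\ref{lemma:square_of_I_diff}), and the remainder near the front is controlled through the pairwise bound of Proposition~\ref{prop:pairwise_bound}.

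Only after this does the limit inherit a continuous density and the classical boundary term, and only then can nonlinear uniqueness be applied. This linear-uniqueness step, or a genuinely proven conditional LLN robust to killing cascades, is what your proposal is missing. Step 1 (using Ledger's M1 version of Mitoma's criterion) and the final upgrade from M1 to uniform convergence are fine and match the paper.
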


Due to the discontinuous effect of reaction events in \eqref{eq:emprical_measures}, the convergence of the $P^n$ takes place on the Skorokhod space $D_{\mathcal{S}^\prime}[0,T]$ rather than a corresponding Wiener space. Nevertheless, the mean-field limit will be shown to be continuous. We shall start out by establishing the weak convergence with respect to Skorokhod's M1 topology on $D_{\mathcal{S}^\prime}[0,T]$, see \cite{ledger_M1}. Since the limit is shown to be continuous, this will yield weak convergence with respect to the uniform topology.

\subsection{A natural extension}

As discussed in \cite{fausti_soj_22}, it can be natural to allow the drift and diffusion coefficients in \eqref{eq:sys_X} to depend on the initial position $X_0^{i}$. In the epidemic model, the latter describes the initial shielding from the disease and it is convenient to partition the population according to this. For example, different subsets of the population starting with certain levels of initial shielding can then be taken to mean-revert around different base-levels, as illustrated in \cite[Fig.~3]{fausti_soj_22}.

This setting is akin to the study of interacting particle systems in random media, as in \cite{Dai_Pra}, and simply requires replacing $P_t^n$ of \eqref{eq:emprical_measures} with the double-layer empirical flow
\begin{equation*}
 \bar{P}^n_t  = \frac{1}{n}\sum_{i=1}^n \mathbf{1}_{t<\tau^i}\delta_{(X^{i,n}_t,X^{i}_0)},
\end{equation*}
where it happens that the $i$'th component of the random medium is the $i$'th initial position. Analogously to \cite{Dai_Pra}, the mean-field limit will then be a collection of coupled PDEs indexed by a medium parameter, which here is the initial point $x_0\sim P_0$. That is, \eqref{eq:the_weak_formulation}--\eqref{eq:weak_A_C_I} becomes
\begin{align*}
	&\int_{\bar{A}_t}^\infty \!\! \phi(x)v(t,x;x_0)\dx - 	\phi(x_0)= \int_0^t\! \int_{\bar{A}_s}^\infty \!\! \partial_{xx}\phi(x)\frac{1}{2}\sigma(s,x;x_0)^2v(s,x;x_0)\dx \ds \nonumber \\
	&+ \int_0^t \!\int_{\bar{A}_s}^\infty \!\!\partial_{x}\phi(x)b(s,x;x_0)v(s,x;x_0) \dx \ds + \int_0^t  \mathcal{D}_{\bar{A}_s,\bar{C}_s}(\phi)\sigma(s,\bar{A}_s;x_0)^2v(s,\bar{A}_s;x_0) \ds ,
\end{align*}
with $\mathcal{D}_{\bar{A}_s,\bar{C}_s}(\phi)= \tfrac{1}{2} \partial_x\phi(\bar{A}_s)-\gamma(s,\bar{C}_s)\phi(\bar{A}_s)$, where $\bar{A}$ and $\bar{C}$ are as in \eqref{eq:weak_A_C_I} but in terms of \[
\bar{I}_t=1-\!\int_{\bar{A}_t}^\infty \! \int_{a_0}^\infty \!\! v(t,x;x_0) \di P_0(x_0) \dx.
\]

Our analysis allows us to recover this formulation in the limit, but we leave out the details for simplicity of the presentation. As long as $b$ and $\sigma$ are continuous functions of $x_0$ and Assumption \ref{assump:coefficient_assumptions} applies to the coefficients uniformly in $x_0$, the arguments go through as they are. For the epidemic model discussed in Section \ref{sect:heuristics_particle}, a very practically relevant, yet simple, example would amount to having $P_0$ (and $P_0^n$) supported on $k$ distinct subsets of $(a_0,\infty)$ with certain coefficients $b^l(t,x)$ and $\sigma^l(t,x)$ specified for each of the subsets $l=1,\ldots k$ (in this case, the coefficients are continuous in $x_0$ for the disjoint union topology). Then, the above simplifies to $k$ free boundary problems in the form of Definition \ref{def:weak_formulation} but coupled through a common free boundary.

\subsection{Overview of our analysis and related works}

For our weak convergence and uniqueness arguments, we rely on a perturbation formula and Aronson estimate for the density of each particle conditional on a sub-filtration of
$\hat{\mathcal{F}}^{i,n}_t$ from \eqref{eq:reduced_filtration}. This shares some similarities with the methods of \cite{Zhongmin2004, QianZheng2004} which are extended to reflected SDEs in \cite{Zhongmin2023}. Among other things, \cite{Zhongmin2023} derives a perturbation formulation for $p_h$ in terms of $p$ and $h$, where $p_h$ is the transition density of a reflected diffusion $\di X_t=h(t,X_t)\dt+\di B_t+\di \ell_t$ with bounded drift and $p$ is the transition density of a reflected Brownian motion (in a smooth domain $D$). This goes via a Girsanov argument, which relies on the boundedness of $h$, and the arguments furthermore exploit existing Aronson estimates for $p_h$. We take a different approach to deriving our perturbation formula and obtain the Aronson estimates directly. Furthermore, we address the reactive boundary aspect and allow for our more general assumptions on the coefficients.

The rest of the paper concerns the mean-field limit. In Section \ref{sect:weak_convergence}, we establish tightness of the empirical measure flows and identify their weak limits. A key subtlety lies in rigorously characterising the boundary condition, and we first obtain an a priori weaker notion of random measure-valued solutions, which we later connect back to Definition \ref{def:weak_formulation}. In Section \ref{sect:decoupled_mf}, we introduce a `decoupled' mean-field particle which takes the moving boundary and its reactivity as inputs specified by a given limit point. The machinery of Section \ref{sect:density_perturb_and_bounds} guarantees that it enjoys good properties pointwise and allows us to show that it must agree with the limiting randomised weak solution from which it was defined, thus yielding a probabilistic representation and showing that each random limit point can be realised as a family of weak solutions to the desired free boundary problem. In Section \ref{sect:free_boundary_uniqueness}, we then prove uniqueness of weak solutions in the sense of Definition \ref{def:weak_formulation}, thereby collapsing any limit point to the single unique weak solution of the free boundary problem. This is achieved through energy estimates at the level of the sub-probability distribution function for a given solution, as this pairs naturally with the nature of the particle-boundary interactions.

As discussed in Section \ref{subsect:inert_and_encounter}, there are interesting connections to \cite{barnes}, but the analysis is fundamentally different. In particular, the system \eqref{eq:Barnes_system} is encoded by a (coupled) Skorokhod map for $L^n=\frac{1}{n}\sum_{i=1}^n\ell_t^i$ with $\ell^i_t=2\sup_{r\leq t}(Y_0^i+B_r^i-S_r^n)^{-}$, where $S^n=s_0+\alpha\int_0^t L^n_s\ds$. This is exploited to get $C[0,T]$-tightness of $L^n$ directly from that of the Brownian motions. Next, the Lipschitzness of the Skorokhod map and a delicate discretisation argument is employed to show that $L^n$ tends uniformly a.s.~to a deterministic limit $L$. By estimates for the Skorokhod map, there is tightness of the empirical measures, so the Lipschitzness of the local time functionals and a coupling argument give Wasserstein convergence to a reflected Brownian motion whose local time $\ell_t=2\sup_{r\leq t}(Y_0+B_r-S_r)^{-}$ and density $p(t,x)$ satisfies $L_t = \E[\ell_t]= \int_0^t p(s,S_s) \ds$ with $ S_t=s_0+\alpha\int_0^tL_s \ds$. The second equality for $L$ is obtained from \eqref{eq:local_time_defn} by passing the limit in $\varepsilon\downarrow 0$ outside the expectation and inside the integral in time, justified by the density estimates and regularity from \cite[Thms.~2.4 and 2.9]{Burdzy_et_al}. By \cite{Burdzy_et_al}, $p(t,x)$ is seen to be a classical solution to the free boundary problem \eqref{eq:Barnes_PDE} (note that \cite{barnes} refers to the boundary as $C^2([0,T])$ throughout when it is in fact only $C^2((0,T])$, but the results still hold), and  the Skorokhod map for $\ell$ together with $L_t=\E[\ell_t]$ yields a contraction argument that gives uniqueness.

Our approach is more general and we note that it readily accommodates extensions such as dependence on the empirical measures in the coefficients $b$ and $\sigma$, although we do not implement it here. We also note that \cite{barnes} relies on \cite{Burdzy_et_al}, but the relevant results of \cite{Burdzy_et_al} on reflected Brownian and heat equations in time-dependent domains do not cover the dynamics we consider. This is handled by the machinery in Section \ref{sect:density_perturb_and_bounds}. Finally, the way we realise the boundary term of the weak formulation \eqref{eq:the_weak_formulation} in the limit also goes via \eqref{eq:local_time_defn}, but we rely on a more delicate operation at level of the particle system: we first exploit \eqref{eq:approx_In_asymptotic} via a martingale argument, and then we show that the limit in $\varepsilon \downarrow 0$ commutes with an expectation of the empirical measures $P_t^n$, before finally confirming that the limits in $n\rightarrow \infty $ and $\varepsilon \downarrow 0$ can be interchanged. This relies critically on appropriate conditioning together with the density estimates and regularity from Section \ref{sect:density_perturb_and_bounds}.

We conclude by highlighting a few other related strands of work. Firstly, \cite{baker_shkolnikov_undercooling, hambly_meier} have studied kinetic undercooling in the one-dimensional Stefan problem via a McKean--Vlasov formulation for Brownian motion with the classical elastic killing mechanism, i.e., \eqref{eq:Grebenkov} with constant $\kappa>0$. This also involves a free boundary that increases with killing events, but the analysis is distinct from ours. Secondly, one-dimensional free boundary problems for epidemic spreading with Stefan-type conditions have been widely studied in the PDE literature; see e.g.~\cite{Hadeler, Wang_etal_2019, Wang_Du_2021, Wang_Du_2022}. These models have not been considered in a mean-field context, and we stress that our infection mechanism, which achieves the natural factorisation \eqref{eq:I_mean-field}, leads to a different reactive and nonlinear specification of the free boundary. Thirdly, we note that \eqref{eq:I_mean-field} can be seen as a generalisation of the classical SIR model, and we stress that \cite{berestycki_desjardins_weitz_oury} directly extends the SIR model by attaching a risk variable to the susceptible population that diffuses on the positive half-line. Unlike our mean-field limit, there is no free boundary and the risk variable is only reflected: instead, infections occur across the entire susceptible population at a rate that increases with the risk level. Finally, we note that \cite{pang_pardoux_2022} studies epidemic mean-field limits for non-Markovian SIR-type models in which infections arise from a Poisson process with a rate depending on the state of the system.

\section{Perturbation formula and density bounds}\label{sect:density_perturb_and_bounds}

Recall that the particle system involves an auxiliary family of `reduced information' filtrations $\{\hat{\mathcal{{F}}}^{i,n}\}_{i=1}^n$, which are defined in \eqref{eq:reduced_filtration}. We begin by decomposing these into
\begin{gather}
\hat{\mathcal{F}}^{i,n}_t = \mathcal{{G}}_{t}^{i,n} \lor \sigma(B^i_r,X_0^i : r\leq t), \nonumber \\
\mathcal{{G}}_{t}^{i,n}:=\sigma((X_{0}^{j},B_{r}^{j},\chi^{j,(k)}):j\neq i,\;k\leq n,\,r\leq t), \quad \text{for} \quad i=1,\ldots,n. \label{eq:G_filtration}
\end{gather}
We will show that, conditional on \eqref{eq:G_filtration}, the law of each particle enjoys an Aronson estimate and may be expressed as a perturbation around the transition density of a reflected Brownian motion. The conditioning on \eqref{eq:G_filtration} plays a key role in Section \ref{sect:weak_convergence}. We note that our approach is self-contained and only relies on probabilistic techniques. We find it both intuitive and succinct.

\subsection{A first Girsanov argument}
We are looking to perturb the $\mathcal{G}^{i,n}$-conditional density of each particle $X^{i,n}$ around that of a standard reflected Brownian motion. To this end, we first rely on a Girsanov argument to relate each particle to a reflected SDE on the half-line with unit volatility.

To ease notation, we set 
\begin{equation*}
\hat{A}^{i,n}_t:=\hat{A}_t^{n,(-i)}, \quad   
\hat{C}_t^{i,n}:=\hat{C}_t^{n,(-i)}, \quad \mathrm{and}
\quad
\hat{I}_t^{i,n}:=\hat{I}_t^{n,(-i)}, \quad
\mathrm{for} \: i=1,\ldots, n.
\end{equation*}
By Tonelli and integration by parts, we have that
\begin{equation}\label{eq:A_is_AC}
\hat{A}^{i,n}_t = a_0 + \alpha \int_0^t\Bigl(\int_{s-\bar{d}}^s \varrho(s-r) \di \hat{I}^{i,n}_r\Bigr)\di s,\quad \Bigl| \int_{s-\bar{d}}^s \varrho(s-r) \di \hat{I}^{i,n}_r \Bigr|  \leq \Vert \varrho \Vert_{L^\infty([0,\bar{d}])}<\infty, 
\end{equation}
where the bound used that $\hat{I}^{i,n}$ is non-decreasing and bounded by 1. Now define
\begin{equation}\label{eq:Lamperti_transform_Gamma}
 \Gamma^{i,n}(t, y)(\omega):=\int_{\hat{A}^{i,n}_t(\omega)}^{y} \frac{1}{\sigma(t,x)}\dx.
\end{equation}
In view of \eqref{eq:A_is_AC} and Assumption \ref{assump:coefficient_assumptions}, it then follows from \cite[Theorem~2.4]{fausti_soj_22} and its proof that
\begin{equation}\label{eq:conditional_sub_prob}
    \Prob (X_{t}^{i,n}\in(a,b)\mid\mathcal{{G}}_{T}^{i,n})=\Prob \bigl(Z_{t}^{i,n}\in(\Gamma^{i,n}(t,a),\Gamma^{i,n}(t,b)),\;t<\tau_{Z}^{i,n}\mid\mathcal{{G}}_{T}^{i,n}\bigr)
\end{equation}
with
\begin{equation}\label{eq:Z_i_n}
	\di Z_{t}^{i,n} = \tilde{b}^{i,n}(t,\omega,Z_{t}^{i,n})\dt + \di B_{t}^{i,n} + \tfrac{1}{2} \di \ell_{t}^{0}(Z^{i,n}),
\end{equation}
where $B^{i,n}$ is a Brownian motion independent of $\mathcal{{G}}^{i,n}$, $\ell_{t}^{0}(Z^{i,n})$ is the local time of ${Z^{i,n}}$ at $0$,
\begin{equation}\label{eq:tau_Upsilon}
    \tau_{Z}^{i,n}=\inf\Bigl\{ t\geq0:\int_{0}^{t} \sigma(r,\hat{A}_{r}^{i,n}) \gamma(r,\hat{C}_{r}^{i,n}) \di  \ell_{r}^{0} (Z^{i,n}) > \chi^{i,n} \Bigr\} 
\end{equation}
for a standard exponential random variable $\chi^{i,n}$ independent of $\hat{\mathcal{{F}}}^{i,n}$, and $\tilde{b}^{i,n}$ is defined as in the proof of \cite[Theorem~2.4]{fausti_soj_22}. All we need to know is that $\tilde{b}^{i,n}$ is $\mathcal{{G}}^{i,n}$-adapted and, for any $T>0$,
\begin{equation}\label{eq:lin_growth_bound}
	\esssup_{(t,\omega)\in [0,T]\times \Omega}| \tilde{b}^{i,n}(t,\omega ,z) | \leq \kappa(1+z)\quad \text{for all} \quad z\geq0,
	\end{equation}
	for some $\kappa>0$. This will allow us to exploit Girsanov's theorem via the following lemma.
  
\begin{lemma}[Change of measure]\label{lem:girsanov}
	Fix $T>0$, $n\geq 1$, $i\in \{1,\ldots,n\}$, and consider $Z^{i,n}$ from \eqref{eq:Z_i_n}. There exist a probability measure $\Q^{i,n}$ on $\hat{\mathcal{F}}_T^{i,n}$, equivalent to the restriction of $\Prob$ to $\hat{\mathcal{F}}_T^{i,n}$, so that $Z^{i,n}$
    becomes a reflected Brownian motion on $[0,T]$ under $\Q^{i,n}$ for the filtration $\hat{\mathcal{F}}^{i,n}$. Furthermore, $Z^{i,n}$ becomes independent of $\mathcal{G}^{i,n}_T$ under $\Q^{i,n}$, and the Radon--Nikodym derivative $\di \Prob/\di \Q^{i,n}$ (still dependent on $\mathcal{G}^{i,n}_T$) satisfies
	\begin{equation}\label{eq:radon-nikodym_G-cond_1}
	\E^{\Q^{i,n}}\Bigl[ \frac{\di  \Prob }{\di  \Q^{i,n}} \; \big| \;  \mathcal{G}^{i,n}_T\Bigr] = 1
	\end{equation}
	as well as
	\begin{equation}\label{eq:p-bound_radon-nikodym}
	\E^{\Q^{i,n}}\Bigl[ \Bigl( \frac{\di \Prob}{ \di \Q^{i,n}}\Bigr)^p \; \big| \; \mathcal{G}_T^{i,n}\Bigr] \leq 	C,
	\end{equation}
	for some $p>1$ close enough to $1$, for a fixed constant $C>0$ which only depends on $p$ and $T$.
\end{lemma}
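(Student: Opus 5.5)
The plan is to apply Girsanov's theorem to remove the drift $\tilde{b}^{i,n}$ from \eqref{eq:Z_i_n}. The natural density process is
\[
\mathcal{E}_t:=\exp\Bigl(-\int_0^t \tilde{b}^{i,n}(r,\omega,Z^{i,n}_r)\di B^{i,n}_r-\tfrac12\int_0^t |\tilde{b}^{i,n}(r,\omega,Z^{i,n}_r)|^2\dr\Bigr),
\]
and we set $\di\Q^{i,n}=\mathcal{E}_T\,\di\Prob$ on $\hat{\mathcal{F}}^{i,n}_T$. Then $\di\Prob/\di\Q^{i,n}=\mathcal{E}_T^{-1}$, which can be written as the stochastic exponential of $\int\tilde b\,\di W$ with $W:=B^{i,n}+\int_0^\cdot\tilde b\dr$.

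The key steps are as follows.

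First, we show that $\mathcal{E}$ is a true martingale even though $\tilde b$ has linear growth. We use \eqref{eq:lin_growth_bound} together with a Bene\v{s}-type argument. Partition $[0,T]$ into small intervals of length $\delta$. Since $Z^{i,n}$ is a reflected SDE with a drift of linear growth, $\sup_{t\le T}|Z^{i,n}_t|$ has Gaussian tails. This follows from the Skorokhod representation $Z_t = Z_0+\int\tilde b+B_t+\sup_{s\le t}(\cdots)^-$ combined with Gronwall, and it uses the Gaussian moment of $X^i_0$ from Assumption \ref{assump:ic_assumptions}, transported through the Lipschitz map $\Gamma^{i,n}(0,\cdot)$. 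Consequently $\E\exp(c\int_{t_k}^{t_{k+1}}|\tilde b|^2\dr)<\infty$ for small $\delta$, and Novikov applied on each subinterval, iterated, gives $\E[\mathcal{E}_T]=1$. All these bounds are uniform in $\omega$ for the $\mathcal{G}^{i,n}$-dependence, because the essential supremum in \eqref{eq:lin_growth_bound} is deterministic. Second, under $\Q^{i,n}$ the process $W$ is an $\hat{\mathcal{F}}^{i,n}$-Brownian motion, so $\di Z=\di W+\tfrac12\di\ell^0$. By uniqueness of the Skorokhod problem, $Z$ is then a reflected Brownian motion with $Z=\Psi(Z_0+W)$ for the Skorokhod map $\Psi$.

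The next step is the independence claim. The drift change acts only on the $B^{i,n}$ direction, and $\mathcal{G}^{i,n}_T$ is generated by inputs independent of $(X^i_0,B^{i,n})$ under $\Prob$. Write $\mathcal{E}_T=F(\omega_{\mathcal G},X_0^i,B^{i,n})$. Conditionally on $\mathcal{G}^{i,n}_T$, Girsanov applied to the single Brownian motion $B^{i,n}$ with the $\mathcal{G}$-measurable (frozen) coefficient shows that $(Z_0,W)$ has the same law, namely initial law times Wiener measure, for every frozen $\omega_{\mathcal G}$. Here the martingale property must hold conditionally, which follows from the uniform bounds of the first step. Hence the conditional law of $Z$ does not depend on $\mathcal{G}^{i,n}_T$, which gives independence, and $\Q^{i,n}$ restricted to $\mathcal{G}^{i,n}_T$ equals $\Prob$. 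The identity \eqref{eq:radon-nikodym_G-cond_1} is then immediate: $\E^{\Q}[\di\Prob/\di\Q\mid\mathcal{G}_T]=\E^{\Prob}[1\mid\mathcal G_T]\cdot(\text{restriction ratio})=1$. A subtlety to check is that $Z_0$ should be handled by leaving its law unchanged, so that only the Brownian part is tilted.

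Finally, for \eqref{eq:p-bound_radon-nikodym} we write $\E^{\Q}[\mathcal{E}_T^{-p}\mid\mathcal G]$ in terms of $W$ and $Z$. Under $\Q$, $Z$ is a reflected Brownian motion independent of $\mathcal G$, so $|\tilde b(r,Z_r)|\le\kappa(1+Z_r)$. We use the standard decomposition
\[
\bigl(\mathcal{E}(\textstyle\int\tilde b\,\di W)_T\bigr)^p=\mathcal{E}\bigl(\textstyle\int 2p\tilde b\,\di W\bigr)_T^{1/2}\exp\bigl((2p^2-p)\textstyle\int_0^T|\tilde b|^2\dr\bigr)^{1/2}
\]
and then Cauchy--Schwarz. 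The first factor has conditional expectation at most $1$ by the supermartingale property. The second factor is bounded by $\E\exp\bigl((2p^2-p)\kappa^2\int_0^T(1+Z_r)^2\dr\bigr)$. For $p$ close to $1$ this is finite only if $\kappa^2T$ is small relative to the Gaussian tail constants of $Z$, and this is the main obstacle, since the constant in front is not small in general. I would handle it with the same time-partition trick: split $T$ into steps of length $\delta$ so that each step carries a small exponent, use the Markov property of reflected Brownian motion together with the Gaussian moment of $Z_0$, and chain the conditional estimates. If the initial Gaussian moment is not strong enough, I would shrink $p-1$ and $\delta$ accordingly. The resulting constant depends only on $p$, $T$ and $\kappa$, which are fixed data.
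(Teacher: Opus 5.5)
Your construction of $\Q^{i,n}$, the Bene\v{s}-type martingale argument, the identification of $Z^{i,n}$ as a reflected Brownian motion, the independence from $\mathcal{G}^{i,n}_T$, and \eqref{eq:radon-nikodym_G-cond_1} are all fine and match the paper. The paper encodes your ``frozen $\omega_{\mathcal G}$'' conditioning through the enlarged filtration $\sigma(B^{i,n}_s,X^{i,n}_0:s\leq t)\lor\mathcal{G}^{i,n}_T$, and your derivation of \eqref{eq:radon-nikodym_G-cond_1} from $\Q^{i,n}=\Prob$ on $\mathcal{G}^{i,n}_T$ is slightly shorter than the paper's.

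The genuine gap is \eqref{eq:p-bound_radon-nikodym}. The obstacle you flag is fatal for the route you propose. After your Cauchy--Schwarz split under $\Q^{i,n}$ you are left with $\E^{\Q^{i,n}}[\exp(\lambda\int_0^T(1+Z_r)^2\dr)]$, where $\lambda=(2p^2-p)\kappa^2>\kappa^2$. Since $Z$ is a reflected Brownian motion under $\Q^{i,n}$, this expectation is $+\infty$ whenever $\lambda T^2\geq\pi^2/8$ (Cameron--Martin). That happens for every $p>1$ as soon as $\kappa T\geq\pi/(2\sqrt{2})$, and no later partition can repair a divergent bound.

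Partitioning before discarding the exponential martingale does not help either. In the Bene\v{s} argument each conditional factor has expectation exactly $1$. Here, your Cauchy--Schwarz bound on the factors $\E^{\Q^{i,n}}\bigl[(\mathcal{E}_{t_k}/\mathcal{E}_{t_{k+1}})^{p}\mid\text{past at }t_k\bigr]$ is of size $\exp(c\,\delta Z_{t_k}^2)$, with $c$ of order $\kappa^2$ and not small in $p-1$. Integrating these against the remaining weight with the reflected-Brownian kernel needs a fresh H\"older split at every step. The Gaussian coefficient then follows a Riccati-type recursion $\beta_k\approx\beta_{k+1}+c_1\beta_{k+1}^2\delta+c_2\kappa^2\delta$, which blows up after a time of order $1/\kappa$ whatever $\delta$ is. Shrinking $p-1$ is useless on this route, because the relevant exponent tends to $\kappa^2$, not to $0$, as $p\downarrow 1$.

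The missing idea is to compute the moment under $\Prob$, where the linear-growth drift is present and the small exponent $p-1$ appears. Since $\Q^{i,n}=\Prob$ on $\mathcal{G}^{i,n}_T$,
\[
\E^{\Q^{i,n}}\Bigl[\Bigl(\frac{\di\Prob}{\di\Q^{i,n}}\Bigr)^{p}\,\Big|\,\mathcal{G}^{i,n}_T\Bigr]
=\E\Bigl[\exp\Bigl((p-1)\int_0^T\tilde{b}^{i,n}\,\di B^{i,n}_r+\frac{p-1}{2}\int_0^T|\tilde{b}^{i,n}|^2\dr\Bigr)\,\Big|\,\mathcal{G}^{i,n}_T\Bigr].
\]
Your Cauchy--Schwarz split applied to this expression leaves $\E[\exp((2(p-1)^2+(p-1))\kappa^2\int_0^T(1+Z_r)^2\dr)\mid\mathcal{G}^{i,n}_T]^{1/2}$, and this coefficient vanishes as $p\downarrow1$.

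Under $\Prob$, the Skorokhod--Gr\"onwall estimate from your first step gives $\sup_{t\leq T}Z_t\leq C(1+Z_0+\sup_{t\leq T}|B^{i,n}_t|)$ with a deterministic $C$. Moreover, $(Z_0,B^{i,n})$ is independent of $\mathcal{G}^{i,n}_T$. So the expectation is bounded uniformly once $p-1$ is small enough in terms of $\kappa$, $T$ and the Gaussian moment of $Z_0$, and no time partition is needed. This is precisely why the lemma claims \eqref{eq:p-bound_radon-nikodym} only for $p$ close to $1$. The paper itself obtains this step by reference to the corresponding argument in \cite{HamblySojmark19}.
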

\begin{proof}
	Let $\tilde{\mathcal{F}}^{i,n}_t:=\sigma(B_s^{i,n},X_0^{i,n}:s\leq t) \lor \mathcal{G}_T^{i,n}$, noting that $\hat{\mathcal{F}}^{i,n}_t \subseteq \tilde{\mathcal{F}}^{i,n}_t$. Then define $\Q^{i,n}$ by
	\[
	\Q^{i,n}(E):= \E[Y_T\mathbf{1}_E], \qquad \text{for all} \quad E\in \hat{\mathcal{F}}^{i,n}_T=\tilde{\mathcal{F}}^{i,n}_T,
	\]
	where $Y_t$ is the stochastic exponential of $-\int_0^t \tilde{b}^{i,n}(s,\omega, Z_s^{i,n})dB_s^{i,n}$, which is adapted to $\hat{\mathcal{F}}^{i,n}_t $. By the same reasoning as in \cite[Proposition 6.1 \& Lemma 6.4]{HamblySojmark19}, we can deduce from the linear growth bound \eqref{eq:lin_growth_bound} on $\tilde{b}^{i,n}$ that $(Y_t)_{t\in[0,T]}$ is a martingale for the larger filtration $\tilde{\mathcal{F}}^{i,n}$. Therefore, an application of Girsanov's theorem shows that $Z^{i,n}$ is a Brownian motion under $\Q^{i,n}$ in the filtration $\tilde{\mathcal{F}}^{i,n}$ on $[0,T]$ (as well as in the smaller filtration $\hat{\mathcal{F}}^{i,n}$ to which it is also adapted).
	
	Next, we confirm that $Z^{i,n}$ is independent of $\mathcal{G}_T^{i,n}$ under $\Q^{i,n}$. Fix an arbitrary sequence of times $t_1,\ldots,t_k$ and bounded continuous functions $f_1,\ldots,f_k$. Then we get
	\begin{align*}
	\E^{\Q^{i,n}}\biggl[ \prod_{l=1}^k &f_l(Z^{i,n}_{t_l})\;\Big| \; \mathcal{G}_T^{i,n} \biggr] = \E^{\Q^{i,n}}\biggl[ \E^{\Q^{i,n}}\biggl[ \prod_{l=1}^k f_l\bigl( Z_0^{i,n}+(Z^{i,n}_{t_l}-Z_0^{i,n})\bigr) \;\Big| \; \sigma(X_0^{i,n}) \lor \mathcal{G}_T^{i,n} \biggr] \; \Big| \; \mathcal{G}_T^{i,n}\biggr]\\
	&=\E^{\Q^{i,n}}\biggl[ \E^{\Q^{i,n}}\biggl[ \prod_{l=1}^k f_l\bigl( z+(Z^{i,n}_{t_l}-Z_0^{i,n})\bigr) \biggr]_{z=Z_0^{i,n}} \biggr]=\E^{\Q^{i,n}}\biggl[ \prod_{l=1}^k f_l(Z^{i,n}_{t_l}) \biggr],
	\end{align*}
	since $Z^{i,n}_0$ is  $\sigma(X_0^{i,n})$-measurable (note $\Gamma^{i,n}(0,\cdot)$ is deterministic) while the increments of $Z^{i,n}$ are independent of $\sigma(X_0^{i,n}) \lor \mathcal{G}_T^{i,n}$ (the latter being contained in $\tilde{\mathcal{F}}^{i,n}_0$) by the above Girsanov argument, and since $\sigma(X_0^{i,n}) \perp \mathcal{G}_T^{i,n} $ (giving the last equality). This confirms the independence.
	
	It remains to show the two final claims \eqref{eq:radon-nikodym_G-cond_1} and \eqref{eq:p-bound_radon-nikodym}. Firstly, we have $\di \Prob/\di \Q^{i,n}=Y_T^{-1}$ with
	\[
	Y_T^{-1}=1+\int_0^T Y_s^{-1}\tilde{b}^{i,n}(s,\omega,Z^{i,n}_s) \di Z^{i,n}_s,
	\]
	where the integrand is $\tilde{\mathcal{F}}^{i,n}$-adapted. In turn, we get $\E^{\Q^{i,n}}[Y_T^{-1}\mid \mathcal{G}_T^{i,n}]=1$ from
	\[
\E^{\Q^{i,n}}\Bigl[ \int_0^T Y_s^{-1} \di  Z^{i,n}_s \;\big| \; \mathcal{G}_T^{i,n} \Bigr] =  \E^{\Q^{i,n}}\biggl[\E^{\Q^{i,n}}\Bigl[ \int_0^T Y_s^{-1} \di  Z^{i,n}_s \;\big| \; \tilde{\mathcal{F}}^{i,n}_0 \Bigr] \;\Big| \; \mathcal{G}_T^{i,n}\biggr] = 0,
	\]
	since the stochastic integral is a martingale on $[0,T]$ for the filtration $\tilde{\mathcal{F}}^{i,n}$ under $\Q^{i,n}$, by the above. This verifies \eqref{eq:radon-nikodym_G-cond_1}. Finally, recall again that $\tilde{b}^{i,n}$ is bounded in terms of $1+Z^{i,n}$, where the latter is independent of $\mathcal{G}_T^{i,n}$ under $\Q^{i,n}$. Thus, we can argue as in the proof of \cite[Lemma 6.5]{HamblySojmark19} to deduce the uniform conditional $L^p$ bound \eqref{eq:p-bound_radon-nikodym}, for a small enough $p>1$.
\end{proof}

\subsection{The perturbation formula and bounds}

We are interested in the random density function $p^{i,n}(t,x)$ for the conditional sub-probability measure \eqref{eq:conditional_sub_prob}. We will show that it exists, derive bounds for it, and confirm that it may be expressed as a particular perturbation around the density of a standard reflected Brownian motion on the positive half-line. The perturbed part of this expression involves two terms: one term for the way in which $Z^{i,n}$ of \eqref{eq:Z_i_n} differs from a reflected Brownian motion and another term for describing the loss of mass due to killing the particle upon reacting with the boundary.

Recall that the transition density for a reflected Brownian motion on $\R^+$ is given by
    \begin{equation}\label{eq:reflected_kernel}
    N(x,t;y,s) = \frac{1}{\sqrt{2\pi(t-s)}}e^{-\frac{(y-x)^2}{2(t-s)}} + \frac{1}{\sqrt{2\pi(t-s)}}e^{-\frac{(y+x)^2}{2(t-s)}}.
    \end{equation}

\begin{thm}[Perturbation formula]\label{thm:perturb_density}
	Restricted to the event $\{X_t^{i,n}\neq \dagger\}$, for $t>0$, the conditional law of $X_t^{i,n}$ given $\sigma(\cup_{s>0}\mathcal{{G}}_{s}^{i,n})$ has a random density $p^{i,n}(t,x)$, which is supported on $[\hat{A}_t^{i,n},\infty)$ and  jointly continuous in $x
    \geq \hat{A}_t^{i,n}$ and $t>0$. This density can be expressed as
	\begin{equation}\label{eq:p_and_p-hat}
	p^{i,n}(t,x)=\hat{p}^{i,n}\Bigl(t\,,\int_{\hat{A}^{i,n}_t}^x \frac{1}{\sigma(t,y)}\dy  \Bigr)\frac{1}{\sigma(t,x)},\quad \text{for all}\quad x \geq \hat{A}_t^{i,n},\,t> 0,
	\end{equation}
	where $\hat{p}^{i,n}$ is supported on $[0,\infty)$ and satisfies the implicit Volterra relation
	\begin{align}\label{eq:density_rep-1}
	\hat{p}^{i,n}(t,x) & =\int_0^\infty N(x,t,y,0)\hspace{1pt} \di \hat{\mu}^{i}_0(y) +\int_{0}^{t}\int_{0}^{\infty}\partial_{y}N(x,t; y ,r)\tilde{b}^{i,n}(r, y)\hat{p}^{i,n}(r,y)\dy \dr \nonumber\\
	& \qquad\; -\int_{0}^{t}N(x,t;0,r)\gamma(r,\hat{{C}}_{r}^{i,n})\sigma(r,\hat{{A}}_{r}^{i,n})^2p^{i,n}(r,\hat{A}^{i,n}_r)\dr
	\end{align}
     with $\hat{\mu}_0^{i}=\mathrm{Law}(X_0^i) \circ (\int_{a_0}^\cdot \frac{1}{\sigma(0,y)}\dy)^{-1}$.    Furthermore, for any $T>0$, we have
\begin{equation}\label{eq:lamperti_aronson}
	\hat{p}^{i,n}(t,x) \leq \frac{C_1}{\sqrt{t}} \exp\Bigl( -c_1x^{2}\Bigr),\quad \text{for all}\quad x\in[0,\infty), \,t\in(0,T],
	\end{equation}
for some constants $c_1,C_1>0$, and, in turn, there are constants $c_2,C_2>0$ such that
	\begin{equation}\label{eq:particle_aronson}
		p^{i,n}(t,x) \leq \frac{C_2 }{\sqrt{t}} \exp\Bigl( -c_2(x-\hat{A}^{i,n}_t)^2 \Bigr),\quad \text{for all}\quad x\in[\hat{A}^{i,n}_t,\infty), \,t\in(0,T],
	\end{equation}
	These constants can be chosen independently of $i=1,\ldots,n$ and $n\geq 1$, and we note that \eqref{eq:lamperti_aronson}--\eqref{eq:particle_aronson} confirm that the perturbation formula \eqref{eq:density_rep-1} is well-defined.
\end{thm}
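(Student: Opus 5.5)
We work throughout conditionally on $\mathcal{G}^{i,n}_T$, so that $\hat A^{i,n}$, $\hat C^{i,n}$ and $\tilde b^{i,n}$ are frozen, deterministic-looking inputs. By \eqref{eq:conditional_sub_prob}, the statement reduces to showing that the killed reflected process $Z^{i,n}$ of \eqref{eq:Z_i_n}--\eqref{eq:tau_Upsilon} has a conditional sub-density $\hat p^{i,n}$ with the stated properties. The formula \eqref{eq:p_and_p-hat} then follows by the change of variables $x\mapsto\Gamma^{i,n}(t,x)$, whose Jacobian is $1/\sigma(t,x)$. Likewise, \eqref{eq:particle_aronson} follows from \eqref{eq:lamperti_aronson}, since $\sigma$ is bounded above and below, so that $\Gamma^{i,n}(t,x)\geq (x-\hat A^{i,n}_t)/\Vert\sigma\Vert_\infty$. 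All of the work is therefore on $Z^{i,n}$.

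\textbf{Step 1: existence and the Aronson bound.} For a Borel set $E\subseteq[0,\infty)$, Lemma \ref{lem:girsanov} and Hölder's inequality give
\[
\Prob(Z^{i,n}_t\in E,\,t<\tau_Z^{i,n}\mid\mathcal G^{i,n}_T)\le \E^{\Q^{i,n}}\bigl[(\tfrac{\di\Prob}{\di\Q^{i,n}})^p\mid\mathcal G_T^{i,n}\bigr]^{1/p}\,\Q^{i,n}(Z^{i,n}_t\in E)^{1/q}.
\]
The first factor is bounded by $C$ via \eqref{eq:p-bound_radon-nikodym}. In the second factor, $Z^{i,n}$ is a reflected Brownian motion independent of $\mathcal G_T^{i,n}$ under $\Q^{i,n}$, so its law has density $\int N(x,t;y,0)\di\hat\mu^i_0(y)$. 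This density is bounded by $Ct^{-1/2}e^{-cx^2}$ thanks to the Gaussian moment in Assumption \ref{assump:ic_assumptions}. Hence the conditional sub-probability is absolutely continuous, giving a density $\hat p^{i,n}$.

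However, Hölder only bounds the measure of sets, via $|E|^{1/q}$, and does not give a pointwise density bound directly. To get \eqref{eq:lamperti_aronson} I would instead derive the Volterra equation first and then bootstrap, as follows.

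\textbf{Step 2: the perturbation formula.} Fix $t$ and $x$, and apply Itô's formula to $r\mapsto f(r,Z_r)\mathbf 1_{r<\tau_Z}$ on $[0,t-\varepsilon]$, with $f(r,y)=N(x,t;y,r)$. Here $f$ solves the backward heat equation in $(r,y)$ and has Neumann derivative $\partial_y f(r,0)=0$.
\begin{itemize}
\item The heat-equation part and the $\tfrac12\di\ell^0$ reflection term cancel, so only the drift term $\partial_yN\,\tilde b$ survives.
\item The killing at the stopping time contributes, after taking $\Prob$-conditional expectation given $\mathcal G_T$ and $\{r<\tau_Z\}$, the compensator $-\int N(x,t;0,r)\sigma\gamma\,\di\ell^0_r$. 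Here I use that $\chi^{i,n}$ is exponential and independent.
\end{itemize}
Taking conditional expectations, the martingale part vanishes, which I would justify via \eqref{eq:radon-nikodym_G-cond_1} and integrability. It remains to identify the expected local-time integral as $\int_0^t N(x,t;0,r)\,\sigma(r,\hat A_r)\gamma(r,\hat C_r)\,\hat p^{i,n}(r,0)\dr$. This uses the occupation-density definition \eqref{eq:local_time_defn} and the continuity of $\hat p^{i,n}$ at $0$. Converting back with $\hat p(r,0)=\sigma(r,\hat A_r)\,p(r,\hat A_r)$ yields the factor $\sigma^2 p$ in \eqref{eq:density_rep-1}. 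Letting $\varepsilon\downarrow0$ then gives \eqref{eq:density_rep-1}, first tested against $x$ in the weak sense and then pointwise.

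\textbf{Step 3: bootstrap for \eqref{eq:lamperti_aronson}.}
\begin{itemize}
\item The killing term is nonpositive, so $\hat p\le$ (free term) $+$ (drift term).
\item The drift term is controlled using $|\partial_yN|\lesssim (t-r)^{-1}e^{-c(x-y)^2/(t-r)}$, together with the linear growth \eqref{eq:lin_growth_bound}; the factor $(1+y)$ is absorbed by the Gaussian.
\item This gives a weakly singular Volterra inequality for the weighted quantity $\sup_y \sqrt r\,e^{c y^2}\hat p(r,y)$. The kernel singularity is $(t-r)^{-1/2}r^{-1/2}$, which is integrable.
\item A Gronwall/Picard iteration, with slightly shrinking constants $c$ at each step, closes the bound. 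The constants depend only on $T$, $\kappa$, the bounds on $\sigma$ and $\gamma$, and the initial moments, so they are uniform in $i$ and $n$.
\end{itemize}
The a priori finiteness needed to run Gronwall can be obtained by first truncating $\tilde b$ or by a localisation argument.

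\textbf{Step 4: continuity.} Joint continuity in $t>0$ and $x\ge0$ follows from \eqref{eq:density_rep-1} and the bound, since each term is a heat potential with an integrable, bounded density.

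\textbf{Main obstacle.} The delicate point is the circularity between Steps 2 and 3. Identifying the local-time term requires continuity and boundedness of $\hat p$ near $0$, which is what we are trying to prove. I would handle this by mollifying the killing: replace $\di\ell^0$ by $\varepsilon^{-1}\mathbf 1_{[0,\varepsilon)}(Z_r)\dr$. I would then derive the analogue of \eqref{eq:density_rep-1} with uniform-in-$\varepsilon$ Aronson bounds, which hold because the sign of the killing term allows it to be dropped. Finally I would pass $\varepsilon\to0$ using equicontinuity from the heat-potential representation.
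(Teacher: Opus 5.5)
\textbf{Gap: the bootstrap in Step 3.} The weak point is Step 3. Your Volterra bootstrap has to trade the Gaussian weight against the linear growth \eqref{eq:lin_growth_bound} of $\tilde b^{i,n}$ at every step, and as sketched it does not close on an arbitrary horizon $[0,T]$.

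Suppose $\hat p(r,y)\le M r^{-1/2}e^{-c(r)y^2}$ and integrate $(1+y)e^{-c(r)y^2}$ against $|\partial_yN(x,t;y,r)|\lesssim (t-r)^{-1}e^{-(x-y)^2/4(t-r)}$. You get back a factor $(1+x)$ times a Gaussian which, after reweighting by $e^{c(t)x^2}$, decays only like $e^{-\alpha(t-r)x^2}$ with $\alpha\approx|c'|$. There are then three ways to proceed, and none works:
\begin{itemize}
\item Taking $\sup_x$ first turns $(1+x)e^{-\alpha(t-r)x^2}$ into $(t-r)^{-1/2}$. Together with the kernel this gives a non-integrable $(t-r)^{-1}$.
\item Integrating in $r$ first, the window $r\in(t-\delta,t)$ contributes about $\kappa\sqrt{\pi/\alpha}\,M$ uniformly in large $x$, which does not shrink with $\delta$. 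Closing then forces $\alpha\gtrsim\kappa^2$, which exhausts the Gaussian constant after a time of order $c_0/\kappa^2$.
\item Paying a fixed loss $\eta$ in $c$ per Picard step makes the losses accumulate over infinitely many iterations.
\end{itemize}
Truncating $\tilde b$ does not rescue this, because uniformity in the truncation level is exactly what is at stake.

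\textbf{The fix: apply the Girsanov--H\"older bound to the drift term.} You already have the needed tool in Step 1. The point is to apply it not to indicator sets but to the drift term of the Duhamel formula, which is the conditional expectation of a smooth function of $Z^{i,n}_r$. By \eqref{eq:radon-nikodym_G-cond_1}--\eqref{eq:p-bound_radon-nikodym},
\[
\E[\mathbf 1_{r<\tau_Z^{i,n}}|\partial_yN(x,t;Z^{i,n}_r,r)|(1+Z^{i,n}_r)\mid\mathcal G^{i,n}_T] \le C\,\E^{\Q^{i,n}}[|\partial_yN(x,t;Z^{i,n}_r,r)|^q(1+Z^{i,n}_r)^q\mid\mathcal G^{i,n}_T]^{1/q}.
\]
Under $\Q^{i,n}$ the law of $Z^{i,n}_r$ is the explicit reflected heat flow of $\hat\mu^i_0$, with density $\lesssim r^{-1/2}e^{-cx^2}$ on $[0,T]$. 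Completing the square gives the bound $(t-r)^{-1+1/(2q)}r^{-1/(2q)}e^{-\lambda x^2}$, which is integrable in $r$. So the drift term is bounded pointwise by $\kappa'e^{-\lambda'x^2}$ without any knowledge of $\hat p$ under $\Prob$, and no bootstrap is needed. This is how the paper proceeds.

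\textbf{Removing the circularity.} Keep the killing term as the Stieltjes integral $-\int_0^t N(x,t;0,r)\sigma\gamma\,\di\E[\ell^0_{r\land\tau_Z^{i,n}}(Z^{i,n})\mid\mathcal G^{i,n}_T]$ instead of identifying it with $\hat p(r,0)$ immediately. Its sign then gives \eqref{eq:lamperti_aronson} at once. The Aronson bound in turn yields $\di\E[\ell^0_{r\land\tau_Z^{i,n}}\mid\mathcal G^{i,n}_T]\le Cr^{-1/2}\dr$, hence continuity of all three terms and of $\hat p$. Only then is the identification $\E[\ell^0_{s\land\tau_Z^{i,n}}\mid\mathcal G^{i,n}_T]=\int_0^s\hat p(r,0)\dr$ made. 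This dissolves the circularity you flag. Your mollified-killing detour is therefore unnecessary; it may be workable, but it would need its own convergence argument for the killed laws.
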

\begin{proof}
	 To derive \eqref{eq:p_and_p-hat}-\eqref{eq:density_rep-1}, we shall work with $Z^{i,n}$ as defined in \eqref{eq:Z_i_n}. Consider an arbitrary interval $(a,b)$ in $\R^+$, and fix any $T>0$. We will begin by deriving a perturbation of
	\[
	 \Prob\bigl( Z^{i,n}_t\in (a,b) , t<\tau_Z^{i,n} \mid \mathcal{G}^{i,n}_T  \bigr)
	\]
around the probability that a reflected Brownian motion is in $(a,b)$ at time $t$. To do this, we consider, for each
	$s\in[0,t]$, the expected probability, conditionally on $\mathcal{{G}}_{T}^{i,n}$,
	of ending up in $(a,b)$ at time $t$ when first running $Z^{i,n}$
	for $s$ units of time subject to $s<\tau_{Z}^{i,n}$ and then
	starting an independent reflected Brownian motion from the
	resulting position and running it for the
	remaining time $t-s$. This can be expressed as
	\begin{align}
	&F_{(a,b)}(t;s)  :=\E \Bigl[\mathbf{1}_{s<\tau_{Z}^{i,n}}\int_{a}^{b}N(x,t;Z_{s}^{i,n},s) \dx \mid\mathcal{{G}}_{T}^{i,n}\Bigr],\quad \text{for}\quad  s \in [0,t),
	\label{eq:convec_heat}
	\end{align}
 for $N(x,t;y,s)$ from \eqref{eq:reflected_kernel}. Since $\lim_{s\uparrow t}\int_{a}^{b}N(x,t;y,s)\dx = 1$
	for all $y\in(a,b)$, while it vanishes for all $y\notin[a,b]$, it follows from the definition of $F_{(a,b)}$ in \eqref{eq:convec_heat}
	that 
	\begin{align*}
	F_{(a,b)}\bigl(t;t-\bigr)= \Prob \bigl( Z_{t}^{i,n}\in (a,b),  t<\tau_{Z}^{i,n}\mid\mathcal{{G}}_{t}^{i,n}\bigr),
	\end{align*}
	where we have also used that the law of $Z^{i,n}_t$ has a density, so there are no atoms, as follows readily from Lemma \ref{lem:girsanov}. At the other end point $s=0$, we instead have
	\begin{equation*}
	F_{(a,b)}(t;0) =\int_{0}^{\infty}\!\int_a^b N(x,t;y,0)\dx \di  \hat{\mu}_{0}^{i}(y)
	\end{equation*}
	 Consequently, we can write
	\begin{equation}\label{eq:prob_approx_error}
	\Prob (Z_{t}^{i,n}\in(a,b),\;t<\tau_{Z}^{i,n}\mid\mathcal{{G}}_{t}^{i,n}) = \int_{0}^{\infty}\!\!\int_a^b N(x,t;y,0) \dx \di  \hat{\mu}_{0}^{i}(y) +\int_{0}^{t}F_{(a,b)}(t;\ds),
	\end{equation}
	so it remains to show that the measure $F_{(a,b)}(t;\ds)$ is absolutely continuous on $[0,t]$ and that this gives the additional terms in \eqref{eq:density_rep-1}. To this end, notice first that \eqref{eq:tau_Upsilon} gives
	\begin{align}\label{eq:F_for_Ito}
	F_{(a,b)}(t;s)  & = 	\E \Bigl[ \Prob \bigl(s<\tau_{Z}^{i,n} \mid \mathcal{G}^{i,n}_T\lor \sigma(Z^{i,n}) \bigr) \int_{a}^{b}N(x,t;Z_{s}^{i,n},s) \dx\mid\mathcal{{G}}_{T}^{i,n}\Bigr]\nonumber\\
	&= 	\E \Bigl[ e^{-\int_{0}^{s}\sigma(r,\hat{{A}}_{r}^{i,n})\gamma(r,\hat{{C}}_{r}^{i,n})\di \ell_{r}^{0}(Z^{i,n})  } \int_{a}^{b}N(x,t;Z_{s}^{i,n},s)\dx\mid\mathcal{{G}}_{T}^{i,n}\Bigr].
	\end{align}
	Next, note that $N(x,t;y,s)$ is the Green's function for the heat equation with Neumann boundary condition, so $(y,s)\mapsto N(x,t;y,s)$ solves the adjoint (backward) problem
	\begin{equation*}
	\partial_{s}v(y,s)=-\frac{1}{2}\partial_{yy}v(y,s),\quad\partial_{y}v(0,s)=0, \quad \text{for}\quad (y,s)\in\R^{+}\times[0,t),
	\end{equation*}
	for the terminal condition $v(\cdot,t)=\delta_x$. Since the Brownian motion $B^{i,n}$, driving the dynamics of $Z^{i,n}$, is independent of $\mathcal{{G}}^{i,n}$, an application of It\^o's formula (inside the conditional expectation on the right-hand side of \eqref{eq:F_for_Ito}) therefore yields the equivalence of measures
	\begin{align}
	&F_{(a,b)}(t;\dr)  = \int_{a}^{b}  \E \bigl[\mathbf{1}_{r<\tau_{Z}^{i,n}}\partial_{y}N(x,t;Z_{r}^{i,n},r)\tilde{b}^{i,n}(r,Z_r^{i,n})\mid\mathcal{{G}}_{T}^{i,n}\bigr]  \dx  \dr \nonumber\\
	&\qquad \qquad - \int_{a}^{b} N(x,t;0,r)  \sigma(r,\hat{{A}}_{r}^{i,n})\gamma(r,\hat{{C}}_{r}^{i,n}) \dx  \di  \E \bigl[ \ell_{r\land \tau_{Z}^{i,n}}^{0} (Z^{i,n}) \mid\mathcal{{G}}_{T}^{i,n} \bigr] \label{eq:F_ab_char},
	\end{align}
	on $[0,s]$ for any $s<t$. While the second term on the right-hand side is well-defined over $[0,t]$, the above characterisation of $F_{(a,b)}(t;\dr)$ need not hold on $[0,t]$ due to the singularity of $r\mapsto\partial_{y}N(x,t;Z_r^{i,n},r)$ as  $r\nearrow t$ if $Z_r^{i,n}$ gets close to $x$. That is, the integrand may not be a legitimate $L^1$ function with respect to $\dr$ on $[0,t]$. Nevertheless, we will be able to show that the singularity is countered by the regularizing effect of the Brownian part of $Z^{i,n}$ once the conditional expectation is taken into account, using Lemma \ref{lem:girsanov}. Indeed, we claim that, for all $x\in\R^+$,
	\[
	r\mapsto \E \bigl[\mathbf{1}_{r<\tau_{Z}^{i,n}}\partial_{y}N(x,t;Z_{r}^{i,n},r)\tilde{b}^{i,n}(r,Z_r^{i,n})\mid\mathcal{{G}}_{T}^{i,n}\bigr]
	\]
	is (almost surely) in $L^1([0,t])$. Bounding each term of
	\begin{align*}
	\partial_{y}N(x,t;y,s) & =\frac{(x-y)}{\sqrt{2\pi}(t-s)^{3/2}}e^{-\frac{(x-y)^{2}}{2(t-s)}}-\frac{(x+y)}{\sqrt{2\pi}(t-s)^{3/2}}e^{-\frac{(x+y)^{2}}{2(t-s)}}.
	\end{align*}
and using the linear growth bound \eqref{eq:lin_growth_bound} on $\tilde{b}^{i,n}$, it suffices to show that
\begin{equation}\label{eq:1st_density_bound}
\int_{0}^{t}\E \biggl[  \frac{1}{(t-s)} e^{-\frac{a(x-Z_{s}^{i,n})^2}{(t-s)}}(1+Z_{s}^{i,n}) \;\Big| \;\mathcal{{G}}_{T}^{i,n}\biggr]\ds < \infty,
	\end{equation}
    where $a>0$ is just some constant. We now change measure to $\Q^{i,n}$ from Lemma \ref{lem:girsanov}, using \eqref{eq:radon-nikodym_G-cond_1}. Conditionally on $\mathcal{{G}}_{T}^{i,n}$, the density $p_\Q^{i,n}(s,x)$ of $Z_s^{i,n}$ under $\Q^{i,n}$ is
	\[
    p_\Q^{i,n}(s,x)= \int_0^\infty N(x,s,y,0)\di  \hat{\mu}^{i}_0(y)
    \]
By Assumption \ref{assump:ic_assumptions} and the boundedness of $\sigma$ from below, there is a $c_0>0$ with $\int e^{c_0y^2}\di  \hat{P}^{i}_0(y)<\infty$. Exploiting $(x-y)^2\geq \frac{\theta}{1+\theta}x^2 - \theta y^2$ for any $\theta\geq0$, we can bound $N(x,s;y,0)$ with $\theta = 2c_0s$ to get
\begin{equation}\label{eq:initial_Aronson_bound}
p_\Q^{i,n}(s,x) \leq \frac{2}{\sqrt{2\pi s}} \int_0^\infty e^{c_0y^2}\di  \hat{\mu}^{i}_0(y) e^{-\frac{c_0}{1+2c_0s}x^2} \leq \frac{c_1}{\sqrt{s}} e^{-\frac{c_2}{1+s}x^2}
\end{equation}
for universal constants $c_1,c_2>0$. Next, after changing the measure to $\Q^{i,n}$ in \eqref{eq:1st_density_bound}, we apply H\"older's inequality  with exponents $1/p+1/q=1$ to separate out the (conditional) $L^p$ norm of the Radon--Nikodym derivative. Taking $p>1$ close enough to $1$, it then follows from \eqref{eq:p-bound_radon-nikodym} of Lemma \ref{lem:girsanov} that \eqref{eq:1st_density_bound} is bounded by a constant multiple of
		\begin{equation*}
	\int_{0}^{t}
\biggl( \int_0^\infty  \frac{1}{(t-s)^q} e^{-\frac{aq(x-z)^2}{(t-s)}}(1+z)^q  p^{i,n}_\Q(s,z) \dz\biggr)^{\!\frac{1}{q}} \ds.
	\end{equation*}
Using \eqref{eq:initial_Aronson_bound} and completing the square, we can bound the $\dz$-integral by a constant multiple of
\begin{equation*}
\frac{1}{(t-s)^q}\frac{1}{s^{1/2}}\int_0^\infty e^{-\frac{aq}{t-s}(x-z)^2}  e^{-bz^2} \dz \leq  \frac{\kappa(t-s)^{1/2}}{(t-s)^{q} s^{1/2}} e^{-\lambda  x^2}
\end{equation*}
for suitable constants $b,\kappa,\lambda>0$. Raising to the power $1/q$ and integrating in $s$, it follows from $\int_0^t(t-s)^{-r}s^{-u}\ds \simeq t^{1-r-u} $ for $0\leq r,u<1$ that
\begin{equation}\label{eq:bound_perturbed_terms}
\int_0^t\E \bigl[\mathbf{1}_{r<\tau_{Z}^{i,n}}\partial_{y}N(x,t;Z_{r}^{i,n},r)\tilde{b}^{i,n}(r,Z_r^{i,n})\mid\mathcal{{G}}_{T}^{i,n}\bigr] \hspace{1.5pt}\dr\leq \kappa^\prime e^{-\lambda^\prime x^2}
\end{equation}
for some constants $\kappa^\prime, \lambda^\prime$ that only depend on $T$, $q$, and \eqref{eq:initial_Aronson_bound}. In particular, \eqref{eq:F_ab_char} holds on $[0,t]$, so we can plug it into \eqref{eq:prob_approx_error}. It also confirms we can apply Fubini's theorem, so we get
\begin{align*}
&\Prob (Z_{t}^{i,n}\in(a,b),\;t<\tau_{Z}^{i,n}\mid\mathcal{{G}}_{t}^{i,n}) = \int_a^b \int_{0}^{\infty} N(x,t;y,0)\,\di  \hat{\mu}_{0}^{i}(y) \, \dx\\
&\qquad+ \int_{a}^{b} \int_0^t \int_0^\infty \partial_{y}N(x,t;z,r)\tilde{b}^{i,n}(r,z)  \hat{p}^{i,n}(r,z) \dz \dr \dx\\
&\qquad - \int_{a}^{b} \int_0^t N(x,t;0,r)  \gamma(r,\hat{{C}}_{r}^{i,n})\sigma(r,\hat{{A}}_{r}^{i,n}) \di  \E \bigl[ \ell_{r\land \tau_{Z}^{i,n}}^{0} (Z^{i,n}) \mid\mathcal{{G}}_{T}^{i,n} \bigr] \dx,
\end{align*}
where we are taking $\hat{p}^{i,n}(r,\cdot)$ to denote (the precise representative of) the density of $Z_r^{i,n}$ subject to $r<\tau_Z^{i,n}$ conditional on $\mathcal{G}^{i,n}_T$. Since this holds for all $(a,b)\subset\R^+$ and all $t\in[0,T]$, we deduce that
\begin{align}\label{eq:p-hat-density}
\hat{p}^{i,n}(t,x) &= \int_{0}^{\infty} N(x,t;y,0)\,\di  \hat{\mu}^i_0(y) + \int_0^t \int_0^\infty \partial_{y}N(x,t;z,r)\tilde{b}^{i,n}(r,z)  \hat{p}^{i,n}(r,z) \dz \,\dr \nonumber\\
&\quad - \int_0^t N(x,t;0,r)  \gamma(r,\hat{{C}}_{r}^{i,n})\sigma(r,\hat{{A}}_{r}^{i,n})\, \di  \E \bigl[ \ell_{r\land \tau_{Z}^{i,n}}^{0} (Z^{i,n}) \mid\mathcal{{G}}_{T}^{i,n} \bigr]
\end{align}
for all $t\in[0,T]$ and almost every $x\in(0,\infty)$. In particular, since the third term is non-positive, we obtain directly from \eqref{eq:initial_Aronson_bound} and \eqref{eq:bound_perturbed_terms} that
\begin{equation}\label{eq:p-hat_Aronson}
\hat{p}^{i,n}(t,x) \leq C t^{-1/2} e^{-cx^2}
\end{equation} for all $t\in(0,T]$ and a.e.~$x\in(0,\infty)$, for some $c,C>0$ that are uniform in $i=1,\ldots,n$ and $n\geq 1$.

With the bound \eqref{eq:p-hat_Aronson}, we can observe that, for any times $ \underline{r} < \overline{r}$ in $[0,t]$,
\begin{equation*}
\E \bigl[ \ell_{\overline{r}\land \tau_{Z}^{i,n}}^{0} (Z^{i,n}) -\ell_{\underline{r} \land \tau_{Z}^{i,n}}^{0} (Z^{i,n}) \mid\mathcal{{G}}_{T}^{i,n} \bigr]  \leq \liminf_{\varepsilon \rightarrow 0} \int_{\underline{r} }^{\overline{r}} \frac{1}{\varepsilon} \int_0^\varepsilon  \hat{p}^{i,n}(r,x) \dx \dr \leq C \int_{\underline{r} }^{\overline{r}} r^{-1/2} \dr.
\end{equation*}
Since $N(x,t;0,r)\leq (t-r)^{-1/2}$ and $\int_0^t (t-r)^{-1/2} r^{-1/2} \dr $ is finite, it follows that we can apply dominated convergence to check that the third term on the right-hand side of \eqref{eq:p-hat-density} is continuous in $x\in\R^+$ and $t\in(0,T]$. Furthermore, it is immediate from the Aronson estimate and dominated convergence that both the first and second term on the right-hand side of \eqref{eq:p-hat-density} are continuous in $x\in\R^+$ and $t\in(0,T]$. Therefore, we can conclude from \eqref{eq:p-hat-density} that $\hat{p}^{i,n}(t,x)$ is in fact continuous in $x\in\R^+$ and $t\in(0,T]$. Moreover, with continuity of $\hat{p}^{i,n}(r,x)$ up to the boundary in $x$, and a maximal singularity of $r^{-1/2}$ at $x=0$, we can confirm that
\begin{equation}\label{eq:expected_local_time}
\E \bigl[ \ell_{s \land \tau_{Z}^{i,n}}^{0} (Z^{i,n}) \mid\mathcal{{G}}_{T}^{i,n} \bigr]= \lim_{\varepsilon \rightarrow 0}\int_0^s \frac{1}{\varepsilon} \int_0^\varepsilon \hat{p}^{i,n}(r,x) \dx  \dr= \int_0^s\hat{p}^{i,n}(r,0) \dr.
\end{equation}
The first equality follows by using the generalised occupation time formula \cite[Ex.~1.13,~Ch.~6]{revuz_yor} and dominated convergence in $\varepsilon$ with respect to the expectation, justified as in the proof of Lemma \ref{lemma:DCT_epsilon} below. The second equality follows by first applying dominated convergence for the outer $\dr$-integral, due to $0\leq \hat{p}^{i,n}(r,\cdot) \leq Cr^{-1/2}$ for $r\in[0,t]$, and then using the uniform continuity in $x$ on right-neighbourhoods of 0. Utilising \eqref{eq:expected_local_time} in \eqref{eq:p-hat-density}, we obtain \eqref{eq:density_rep-1} with $\sigma(r,\hat{A}^{i,n}_r)\hat{p}^{i,n}(r,0)$ in place of $\sigma(r,\hat{A}^{i,n}_r)^2p^{i,n}(r,\hat{A}_r^{i,n})$ in the final term. Noting that \eqref{eq:p_and_p-hat} follows from \eqref{eq:conditional_sub_prob}--\eqref{eq:Lamperti_transform_Gamma}, we deduce \eqref{eq:density_rep-1}. Thus, it only remains to observe that (i) the continuity of $(t,x)\mapsto p^{i,n}(t,x)$ follows from the continuity of $(t,x)\mapsto \int_{\hat{A}^{i,n}_t}^x \frac{1}{\sigma(t,y)}\dy$ along with that of $(t,x)\mapsto \hat{p}^{i,n}(t,x)$ and (ii) we obtain the Aronson estimate \eqref{eq:particle_aronson} for $p^{i,n}$ from that of $\hat{p}^{i,n}$, which was established in \eqref{eq:p-hat_Aronson}, using the lower bound $\int_{\hat{A}^{i,n}_t}^x \frac{1}{\sigma(t,y)}\dy \geq c (x- \hat{A}^{i,n}_t)$ for all $t\geq0$ and $x\geq \hat{A}^{i,n}_t$, for some $c>0$.
\end{proof}

\begin{corollary}[$L^2$ control]\label{cor:L2-sqrt}
    For any $T>0$, there exists $C>0$ such that
    \begin{equation}\label{eq:L2-control}
    \bigl\Vert p^{i,n}(t,\cdot) \bigr\Vert_2^2 \leq \frac{C}{\sqrt{t}},\quad t\in(0,T],
    \end{equation}
    where $C$ can be chosen uniformly in $i=1,\ldots, n$ and $n\geq 1$.
\end{corollary}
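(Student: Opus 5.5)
The bound \eqref{eq:L2-control} follows directly from the Aronson estimate \eqref{eq:particle_aronson} in Theorem \ref{thm:perturb_density}. Fix $T>0$ and let $c_2,C_2>0$ be the constants from \eqref{eq:particle_aronson}, which are uniform in $i=1,\ldots,n$ and $n\geq 1$. Since $p^{i,n}(t,\cdot)$ is supported on $[\hat{A}^{i,n}_t,\infty)$, squaring the pointwise bound gives
\[
\bigl\Vert p^{i,n}(t,\cdot)\bigr\Vert_2^2 = \int_{\hat{A}^{i,n}_t}^\infty p^{i,n}(t,x)^2\dx \leq \frac{C_2^2}{t}\int_{\hat{A}^{i,n}_t}^\infty e^{-2c_2(x-\hat{A}^{i,n}_t)^2}\dx = \frac{C_2^2}{t}\cdot\frac{1}{2}\sqrt{\frac{\pi}{2c_2}},
\]
since after the shift $y=x-\hat{A}^{i,n}_t$ the Gaussian integral no longer depends on the random position of the front. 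This already yields a uniform bound of order $t^{-1}$, but the claimed order $t^{-1/2}$ is better, so one factor of the supremum has to be traded for mass.

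To get the sharper rate, I would bound the square by the supremum times the density itself: $\int p^{i,n}(t,x)^2\dx \leq \sup_x p^{i,n}(t,x)\cdot\int p^{i,n}(t,x)\dx$. The supremum is at most $C_2 t^{-1/2}$ by \eqref{eq:particle_aronson}. The integral is at most $1$, because $p^{i,n}(t,\cdot)$ is the density of the conditional sub-probability law of $X^{i,n}_t$ on $\{X^{i,n}_t\neq\dagger\}$. Hence $\Vert p^{i,n}(t,\cdot)\Vert_2^2\leq C_2 t^{-1/2}$ for all $t\in(0,T]$, and we may take $C=C_2$, which inherits the uniformity in $i$ and $n$.

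The only point needing care is that the total mass of $p^{i,n}(t,\cdot)$ is at most one almost surely. This holds because it equals $\Prob(t<\tau^i_Z\mid\mathcal{G}^{i,n}_T)\leq 1$ by \eqref{eq:conditional_sub_prob}. Equivalently, via the change of variables \eqref{eq:p_and_p-hat}, $\int_{\hat{A}^{i,n}_t}^\infty p^{i,n}(t,x)\dx=\int_0^\infty\hat{p}^{i,n}(t,z)\dz\leq 1$. With this in hand there is no genuine obstacle, and the bound holds almost surely.
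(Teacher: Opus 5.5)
Your proof is correct, but it takes a different route from the paper. The paper goes back to the Volterra representation \eqref{eq:density_rep-1} and bounds its three terms separately in $L^2$. For the initial-data term, it writes the squared norm as $\int_0^\infty\int_0^\infty\int_0^\infty N(x,t;y,0)N(x,t;z,0)\dx \di\hat{\mu}^{i}_0(y)\di\hat{\mu}^{i}_0(z)$. The inner integral is $O(t^{-1/2})$ by a Gaussian convolution (Chapman--Kolmogorov type) bound, which uses only that $\hat{\mu}^i_0$ is a probability measure. The drift and reaction terms are shown to be bounded in $L^2$ uniformly on $[0,T]$ using \eqref{eq:lamperti_aronson}. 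The bound for $p^{i,n}$ then follows via \eqref{eq:p_and_p-hat} and the two-sided bounds on $\sigma$.

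Your interpolation $\Vert p\Vert_2^2\le\Vert p\Vert_\infty\Vert p\Vert_1$ bypasses both the representation formula and the change of variables. It needs only the sup bound $C_2t^{-1/2}$ contained in \eqref{eq:particle_aronson} and the sub-probability mass. Since the paper's bound on the reaction term itself invokes the Aronson estimate (through $p^{i,n}(r,\hat{A}^{i,n}_r)\le Cr^{-1/2}$), your route is strictly more economical. It also shows that this kind of $L^2$ control follows automatically from any $t^{-1/2}$ sup bound for a sub-probability density. For instance, it yields the first condition in \eqref{eq:density_control_uniqueness} from the second, and gives the $L^2$ claim for the decoupled density in Proposition \ref{prop:linearised_problem_nu} for free.

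What the paper's decomposition buys in exchange is structural information. It places the $t^{-1/2}$ singularity in the free reflected evolution of the initial law, with the drift and killing perturbations contributing only an $L^2$-bounded correction near $t=0$.
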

\begin{proof}
This follows directly from \eqref{eq:density_rep-1}. Indeed,
\[
\int_0^\infty\Bigl(\int_0^\infty N(x,t,y,0)\di  \hat{\mu}_0^{i}(y)\Bigr)^2\dx = \int_0^\infty\!\int_0^\infty \!\int_0^\infty N(x,t,y,0)N(x,t,z,0)\dx \di \hat{\mu}_0^{i}(y) \di  \hat{\mu}_0^{i}(z),
\]
where the inner integral is bounded by a constant multiple of $t^{-1/2}$ for $t\in(0,T]$, and, due to \eqref{eq:lamperti_aronson}, the $L^2$-norms of the second and third terms of \eqref{eq:density_rep-1} are bounded by a constant for $t\in[0,T]$.
\end{proof}

We stress that the above methods can also be applied to the joint law of multiple particles. For example, let $\mathcal{G}^{i,i^\prime;n}$ be defined as in \eqref{eq:G_filtration} but with $j\notin \{ i,i^\prime\}$, where $i\neq i^\prime$. Then, the same arguments as in the proof of Theorem \ref{thm:perturb_density} yield the following observation.

\begin{prop}[Pairwise estimates]\label{prop:pairwise_bound}
    For any $T>0$, there exists $C,c>0$ such that
    \[
    \Prob\bigl(X^{i,n}_t\in \dx , \,X^{j,n}_t\in \dy \mid \mathcal{G}^{i,j;n}_T \bigr) \leq Ct^{-1} e^{-c\Vert(x,y) \Vert^2 } \dx \dy,\quad t\in(0,T],
    \]
    provided $i\neq j$, where $C,c$ are uniform in $i,j=1,\ldots,n$ and $n\geq 1$.
\end{prop}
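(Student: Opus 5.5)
The plan is to repeat the proof of Theorem \ref{thm:perturb_density} for the pair $(X^{i,n},X^{j,n})$, now viewed as a two-dimensional reflected diffusion in the quadrant with killing. We condition on $\mathcal{G}^{i,j;n}_T$, which contains all particles other than $i$ and $j$.

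The first difficulty is that the front seen by $i$ and $j$ now depends on whether the other of the two has been infected. To avoid this, we work with the doubly reduced system in which both $i$ and $j$ are fully reflected and immune. We denote its front by $\hat{A}^{i,j;n}$ and its contagiousness by $\hat{C}^{i,j;n}$; both are $\mathcal{G}^{i,j;n}$-adapted.

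The key reduction is a sub-probability domination. On the event $\{t<\tau^i\wedge\tau^j\}$, the actual system agrees with the doubly reduced one up to the first infection among $i,j$. This holds because particles $i$ and $j$ influence the others only through their own infection. Consequently, the joint conditional law of $(X^{i,n}_t,X^{j,n}_t)$ on $\{X^{i,n}_t\neq\dagger,\,X^{j,n}_t\neq\dagger\}$ is bounded above by the law of the doubly reduced reflected pair (killing only removes mass). This is the mirror image of how the third term in \eqref{eq:p-hat-density} was simply discarded in proving \eqref{eq:p-hat_Aronson}, so the killing mechanism never needs a precise treatment here. One should check against the construction in \cite{fausti_soj_22} that the infection times are built so this domination holds pathwise. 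I expect this to be the main conceptual obstacle; if it fails, I would instead carry the killing terms along and use only their sign.

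For the reduced pair we apply the Lamperti transform \eqref{eq:Lamperti_transform_Gamma} coordinatewise with respect to $\hat{A}^{i,j;n}$. This produces $(Z^i,Z^j)$, two reflected SDEs on $\R^+$ with unit volatility, driven by independent Brownian motions that are independent of $\mathcal{G}^{i,j;n}$. Their drifts are $\mathcal{G}^{i,j;n}$-adapted and satisfy the linear growth bound \eqref{eq:lin_growth_bound}. As in Lemma \ref{lem:girsanov}, with the two-dimensional stochastic exponential, we obtain a measure $\Q$ under which $(Z^i,Z^j)$ are independent reflected Brownian motions, independent of $\mathcal{G}^{i,j;n}_T$. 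The analogues of \eqref{eq:radon-nikodym_G-cond_1} and \eqref{eq:p-bound_radon-nikodym} hold for some $p>1$, by the same argument as in \cite{HamblySojmark19}, since the drift grows linearly in $|z|$. The initial points are independent, so the $\Q$-density is the product of two bounds \eqref{eq:initial_Aronson_bound}, i.e.\ at most $Cs^{-1}e^{-c|z|^2}$.

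We then run the Duhamel argument with the product kernel $N(x,t;y,s)N(x',t;y',s)$, which is the Neumann Green's function on the quadrant. This gives
\[
\hat p^{ij}(t,x,x') \le \mathrm{(free\ term)} + \int_0^t\!\!\int \bigl|\nabla_{(y,y')}[N N]\bigr|\,|\tilde b|\,\hat p^{ij}\,\dy\,\dy'\dr .
\]
The perturbation integral is bounded via Hölder's inequality and the conditional $L^p$ bound. In two dimensions the kernel gradient has singularity $(t-s)^{-3/2}$, and integrating against the Gaussian with variance $(t-s)/q$ recovers a factor $(t-s)$. Against the prior density bound $s^{-1}$, the $1/q$ power leaves $\int_0^t (t-s)^{-r}s^{-u}\ds$ with $r=1/2+(1-1/q)$ and $u=1/q$.

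This exponent bookkeeping is the main technical obstacle, since $u=1/q<1$ requires only $q>1$, but $r<1$ requires $q<2$, i.e.\ $p>2$. If the resulting exponents are not integrable, I would first try to bound the full pair density via Hölder's inequality against the single-particle bounds from Theorem \ref{thm:perturb_density} rather than a crude product bound. Failing that, I would iterate the Volterra inequality, using the $\Prob$-density's own Aronson bound once it is known to be finite (a bootstrap in the spirit of \eqref{eq:initial_Aronson_bound}). The free term is a product of one-dimensional heat kernels integrated against the initial laws, giving $Ct^{-1}e^{-c|(x,x')|^2}$. Undoing the Lamperti map with $\sigma$ bounded above and below yields the stated Gaussian in $(x-\hat A_t,y-\hat A_t)$, which dominates $e^{-c\|(x,y)\|^2}$ after adjusting constants on $[0,T]$, since $\hat A$ is bounded there. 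All constants depend only on $T$ and the coefficients, hence are uniform in $i,j,n$.
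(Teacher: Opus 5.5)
Your outline is the paper's outline: doubly reduced system, coordinatewise Lamperti transform, Girsanov to reflected Brownian motion in the quadrant, then the argument of Theorem~\ref{thm:perturb_density}. Your domination step is also a legitimate way to discard the killing. However, the proposal does not prove the bound, because the step you flag as the main technical obstacle is never closed.

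Your exponent count is right. In the quadrant, $|\nabla_{(y,y')}(NN)|\lesssim (t-r)^{-3/2}e^{-a|(x,x')-(y,y')|^2/(t-r)}$. After raising to the power $q$, integrating against the $\Q$-density $\lesssim r^{-1}e^{-c|z|^2}$, and taking the $1/q$-th root, you are left with $(t-r)^{-3/2+1/q}r^{-1/q}$. This is integrable at $r=t$ only if $q<2$, i.e.\ $p>2$. Lemma~\ref{lem:girsanov} only gives some $p>1$ close to $1$, and for drifts of linear growth on a fixed horizon one cannot expect $p>2$ in general.

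Neither fallback repairs this. Bounds on the two marginal densities say nothing about a joint density: $(X,X)$ has bounded marginal densities but no joint density. So H\"older against Theorem~\ref{thm:perturb_density} cannot produce a factor $t^{-1}\dx\dy$. The bootstrap presupposes a finite a priori bound on $\sup_{s\le t}s\Vert\hat p^{ij}(s,\cdot)\Vert_\infty$, which is what is to be proved. The paper's proof only says to run Theorem~\ref{thm:perturb_density} in the quadrant. Your count shows that a literal transcription with $p$ close to $1$ does not close, so an extra ingredient is needed to make that instruction work.

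The missing observation is conditional independence. In the doubly reduced system, the front $\hat A^{i,j;n}$ and the contagiousness are $\mathcal{G}^{i,j;n}_T$-measurable on $[0,T]$. So the reflected particle $i$ is a functional of $(X^i_0,B^i)$ and $\mathcal{G}^{i,j;n}_T$ alone, and likewise for $j$. Since $(X^i_0,B^i)$, $(X^j_0,B^j)$ and $\mathcal{G}^{i,j;n}_T$ are independent, the two reflected particles are conditionally independent given $\mathcal{G}^{i,j;n}_T$.

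Their conditional joint density is therefore the product of the two conditional marginal densities. Each marginal obeys \eqref{eq:particle_aronson}, because the proof of Theorem~\ref{thm:perturb_density} runs verbatim with $\mathcal{G}^{i,j;n}$ in place of $\mathcal{G}^{i,n}$ and without the killing term. In that one-dimensional argument, $p$ close to $1$ suffices. Combined with your domination step, this gives $Ct^{-1}e^{-c[(x-\hat A_t)^2+(y-\hat A_t)^2]}$. Your final adjustment of constants, using that $\hat A$ is bounded on $[0,T]$, then finishes the proof with constants uniform in $i,j,n$.

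Equivalently, inside your quadrant Duhamel formula the conditional expectation of $\partial_yN(x,t;Z^i_r,r)\,\tilde b(r,Z^i_r)\,N(x',t;Z^j_r,r)$ factorises. The first factor is the one-dimensional quantity controlled in the proof of Theorem~\ref{thm:perturb_density}, bounded by $(t-r)^{-1+1/(2q)}r^{-1/(2q)}$. The second factor is $\lesssim r^{-1/2}$, using the one-dimensional bound for particle $j$ and the fact that $N$ integrates to $1$. The resulting integrand $(t-r)^{-1+1/(2q)}r^{-1/(2q)-1/2}$ is integrable for every $q>1$.
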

\begin{proof}
Fix two indices $i\neq j$. Analogously to the paragraph following \eqref{eq:reduced_filtration}, consider the auxiliary system $\hat{\mathbf{X}}^{n,-(i,j)}$ where the $i$’th and $j$'th particles are fully reflected. We can then apply \eqref{eq:Lamperti_transform_Gamma} to each of them to get a characterisation analogous to \eqref{eq:conditional_sub_prob}, which we can now relate to a reflected Brownian motion in the positive quadrant by following the same arguments. Proceeding as in the proof of Theorem \ref{thm:perturb_density}, we thus obtain the desired bound on the conditional density.
\end{proof}

\section{Weak convergence}\label{sect:weak_convergence}

We can express the empirical measures \eqref{eq:emprical_measures} as
\begin{equation*}
P_{t}^{n}(E)=\frac{1}{n}\sum_{i=1}^{n}\mathbf{{1}}_{E}(X_{t}^{i,n})\quad \text{for}\quad  E\subseteq\R\cup\{\dagger \}.
\end{equation*}
Setting $I^n_t:=P_t^n(\{\dagger \})$, conservation of mass takes the form $I^n_t+P^n_t(\R)=1$. Note that $I^n$ evolves discontinuously upon the killing of the particles. It will often be convenient to work in the frame of the moving boundary. This amounts to considering the push-forward empirical measures
\begin{equation}\label{eq:shift_empirical}
{\mu}_{t}^{n}:=P_{t}^{n}\circ\theta_{A_{t}^{n}}^{-1},
\end{equation}
where $\theta_c$ is given by $\theta_{c}(\dagger):=\dagger$ and $\theta_{c}(x):=x+c$ for $x\in\R$. To fix notation, we let any finite measure $\mu$ on $\R$ act on continuous functions $f\in C(\R)$ by $\langle \mu, f \rangle := \int_\R f(x)\di \mu(x)$. 

\subsection{The empirical measure flows and tightness}

Let $\mu^n$ be given by \eqref{eq:shift_empirical}. As the local times are supported only on the set of times where the given particle is at the moving boundary, we have $X^{i,n}_{\tau^{i}} = A_{\tau^i}^n$ and hence
\[
\langle {\mu}_{t}^{n} , f \rangle = \frac{1}{n}\sum_{i=1}^n \mathbf{1}_{t<\tau^i}f(X^{i,n}_{t} - A^n_{t}) =\frac{1}{n}\sum_{i=1}^n f(X^{i,n}_{t\land \tau^{i}} - A_{t\land \tau^i}^n) - f(0)I_t^n.
\]
For a smooth function $\phi$, we can apply It\^o's formula to the first expression on the right-hand side of the above decomposition. In this way, we obtain the c\`adl\`ag dynamics
\begin{align}\label{eq:empirical_dynamics}
\di \langle\mu^{n}_t,\phi\rangle	&=\langle\mu_{t}^{n},\tfrac{{1}}{2}\sigma_{A^n}(t,\cdot)^{2}\partial_{xx}\phi\rangle\dt+\langle\mu_{t}^{n}, b_{A^n}(t,\cdot)\partial_{x}\phi\rangle\dt - \langle\mu_{t}^{n},\partial_{x}\phi\rangle\di A_{t}^{n} \nonumber 
 \\ &\quad + \frac{1}{n}\sum_{i=1}^n\int_0^{t\land \tau^i} \!\!\! \sigma_{A^n}(s,X^{i,n}_s) \partial_x \phi(X^{i,n}_s)\di B^i_s  + \frac{1}{2} \partial_{x}\phi(0)\frac{1}{n}\sum_{i=1}^{n}\di \ell_{t \land \tau^{i}}^{i,n}-\phi(0)\di I_{t}^{n}
\end{align}
for all $t\geq0$ and all test functions $\phi \in C_b^\infty(\R)$, where we have introduced the notation
\begin{equation*}
    b_{h} (t, x) = b(t, x+h(t)), \quad \sigma_{h}(t,x) = \sigma(t, x+h(t)),
\end{equation*}
for the drift and diffusion coefficients shifted by a given curve $t\mapsto h(t)$ in the spatial variable. We shall use this notation throughout. In deriving \eqref{eq:empirical_dynamics}, we have further exploited that the local times are supported on the set of times where the respective particles are at the boundary.  Note that the discontinuities in the evolution of $\mu^n_t$ are all concentrated in the last term of \eqref{eq:empirical_dynamics} driven by $I^n$.

We view each $\mu^n$ as a measure-valued c\`adl\`ag process which is an element
\[
\phi \mapsto (\langle \mu^n_t, \phi \rangle)_{t\in[0,T]},\quad \phi \in \mathcal{S},
\]
of the Skorokhod space $D_{\mathcal{S}^\prime}[0,T]$, equipped with Skorokhod's M1 topology as defined in \cite{ledger_M1}, where we recall that $\mathcal{S}^\prime$ denotes the space of tempered distributions on $\R$. Likewise, any other Skorokhod space we shall consider will be equipped with the M1 topology.

\begin{prop}[Tightness]\label{prop:tightness} The family $(\mu^n)_{n\geq 1}$ is tight on $D_{\mathcal{S}^\prime}[0,T]$.
\end{prop}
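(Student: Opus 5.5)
By Mitoma's theorem in the M1 setting (Ledger, \cite{ledger_M1}), tightness of $(\mu^n)_{n\geq1}$ on $D_{\mathcal{S}^\prime}[0,T]$ reduces to tightness of the real-valued processes $(\langle \mu^n_t,\phi\rangle)_{t\in[0,T]}$ on $D_{\R}[0,T]$ with the M1 topology, for each fixed $\phi\in\mathcal{S}$. For each such $\phi$ I would split the dynamics \eqref{eq:empirical_dynamics} as
\[
\langle\mu^n_t,\phi\rangle = \langle\mu^n_0,\phi\rangle + V^n_t + M^n_t + \tfrac12\partial_x\phi(0)L^n_t - \phi(0)I^n_t .
\]
Here $V^n$ collects the three $\dt$ and $\di A^n_t$ terms, $M^n$ is the stochastic integral term, and $L^n_t:=\frac1n\sum_i\ell^{i,n}_{t\land\tau^i}$. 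The aim is to show that $\langle\mu^n_0,\phi\rangle + V^n + M^n + \tfrac12\partial_x\phi(0)L^n$ is C-tight, and that $\phi(0)I^n$ is a monotone process. This is the structure for which M1 tightness is well suited.

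\textbf{Bounded variation and martingale parts.} The initial value $\langle\mu^n_0,\phi\rangle$ is bounded by $\|\phi\|_\infty$. For $V^n$, $\sigma$ is bounded, and $A^n$ is Lipschitz with constant $\alpha\|\varrho\|_{L^\infty}$ by \eqref{eq:A_is_AC} (the same computation applies to $A^n$). The drift term is controlled by linear growth of $b$ together with $\sup_{n}\E[\frac1n\sum_i\sup_{t\le T}|X^{i,n}_t|^2]<\infty$, which follows from Assumption \ref{assump:ic_assumptions} and standard SDE estimates; the only dependence on the boundary is through $A^n$, which is bounded. These give uniform moduli of continuity of the form $\E|V^n_t-V^n_s|\le C|t-s|$. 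For $M^n$, the quadratic variation is at most $\frac1n\|\sigma\|_\infty^2\|\partial_x\phi\|_\infty^2\,t$, so by BDG $\E[\sup_t|M^n_t|^2]\to0$. In particular $M^n$ is C-tight.

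\textbf{Local-time part.} $L^n$ is continuous and nondecreasing, so I need increment bounds. I would control $\E[L^n_t-L^n_s]$ using Theorem \ref{thm:perturb_density}: by the argument around \eqref{eq:expected_local_time}, conditionally on $\mathcal{G}^{i,n}_T$ and after the Lamperti change \eqref{eq:Lamperti_transform_Gamma}, the expected local time increment of each particle on $[s,t]$ is at most $C\int_s^t r^{-1/2}\dr\le 2C\sqrt{t-s}$. Here one uses \eqref{eq:lamperti_aronson} and the fact that the particle coincides with its immune copy before $\tau^i$. To obtain a Kolmogorov/Aldous-type criterion I would also need second moments of the increments. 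For these I would use Proposition \ref{prop:pairwise_bound}, which bounds the cross terms $\E[(\ell^i_t-\ell^i_s)(\ell^j_t-\ell^j_s)]\lesssim (t-s)$ for $i\neq j$, together with a single-particle bound $\E[(\ell^i_t-\ell^i_s)^2]\lesssim (t-s)$. Alternatively, and more simply, I would use the Skorokhod-map bound $\ell^i_t-\ell^i_s\le 2\sup_{s\le r\le t}|\text{increment of the driving semimartingale minus }A^n|$. This gives $\E[(L^n_t-L^n_s)^2]\le C(t-s)$ directly from the martingale and drift estimates, hence C-tightness.

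\textbf{Jump part (main obstacle).} The remaining term $I^n$ is nondecreasing and bounded by $1$. A sum of a C-tight process and a monotone, uniformly bounded process is tight in M1: the M1 oscillation function vanishes on monotone pieces, so the M1 modulus of the sum is controlled by the uniform modulus of the continuous part. I would apply this to $\langle\mu^n_t,\phi\rangle$ via the characterisation in terms of the M1 oscillation $w_s$ and the boundary oscillations. The delicate point is that a sum of M1-tight processes is not automatically M1-tight. The argument must therefore exploit that $-\phi(0)I^n$ has a fixed sign of monotonicity and that the remaining part is C-tight, and it must check that the oscillation near $t=0$ and $t=T$ vanishes. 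The latter follows because the continuous part is equicontinuous and $I^n$ near $0$ is controlled by $L^n$ up to the martingale error in \eqref{eq:approx_In_asymptotic}.
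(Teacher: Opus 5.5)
Your route is essentially the paper's. You split $\langle\mu^n,\phi\rangle$ into a C-tight part (drift, $\di A^n$ and martingale terms, plus the local-time term controlled through the Skorokhod map) and the bounded monotone part $-\phi(0)I^n$. You then conclude with the M1 criterion after Ledger's Mitoma-type reduction. Your check that the M1 oscillation of $c+m$ is bounded by the uniform modulus of $c$ when $m$ is monotone, and your control of the endpoint oscillations via $\E[I^n_\delta]\le \sup\gamma\,\E[L^n_\delta]$, are exactly what the paper takes from \cite[Props.~4.1--4.2]{ledger_M1}.

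\textbf{The gap.} Your step ``$\E[(L^n_t-L^n_s)^2]\le C(t-s)$, hence C-tightness'' does not hold. Kolmogorov-type criteria need $\E|L^n_t-L^n_s|^p\le C|t-s|^{1+\beta}$ with $\beta>0$. Monotonicity does not rescue the exponent $1$. For instance, the nondecreasing ramps $L^n_t=\min\{1,\max\{0,n(t-U)\}\}$ with $U$ uniform on $[0,T/2]$ satisfy $\E[(L^n_t-L^n_s)^2]\le \tfrac{4}{T}(t-s)$ uniformly in $n$, yet they converge to a jump.

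\textbf{The fix.} Do what the paper does. Apply BDG with $p=4$ to the Skorokhod-map bound $\ell^{i,n}_t-\ell^{i,n}_s\le 2\sup_{s\le r\le t}|N^i_r-N^i_s|$, where $N^i$ is the driving semimartingale of $X^{i,n}-A^n$. Use Gr\"onwall for the moments of the drift term. This gives $\E[(\ell^{i,n}_{t\land\tau^i}-\ell^{i,n}_{s\land\tau^i})^4]=O(|t-s|^2)$ uniformly in $i,n$, and Jensen transfers the bound to $L^n$.

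\textbf{A smaller point on $V^n$.} The bound $\E|V^n_t-V^n_s|\le C|t-s|$ is not a tightness criterion by itself. You do, however, have a pathwise bound: for $\phi\in\mathcal{S}$ the integrands $\sigma_{A^n}^2\partial_{xx}\phi$ and $b_{A^n}\partial_x\phi$ are bounded, by the linear growth of $b$ and $A^n_t\in[a_0,a_0+\alpha]$. Moreover $|\tfrac{\di}{\dt}A^n_t|\le\alpha\Vert\varrho\Vert_{L^\infty}$. Hence $V^n$ is Lipschitz with a deterministic constant, and no moment bounds on the particles are needed for this term.
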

\begin{proof}
For any $s<t$, we have
\[
\langle\mu^{n}_t ,\phi\rangle - \langle\mu^{n}_s ,\phi\rangle = (Y^{n,\phi}_t-Y^{n,\phi}_s) +  \frac{1}{2} \partial_{x}\phi(0)\frac{1}{n}\sum_{i=1}^{n}(\ell_{t\land \tau^i}^{i,n}-\ell_{s\land \tau^i}^{i,n})-\phi(0)(I_{t}^{n}-I_{s}^{n}),
\]
for $\phi\in \mathcal{S}$, where $Y^{n, \phi}$ collects the first four terms of the right-hand side of \eqref{eq:empirical_dynamics}. By Jensen's inequality,  the Burkholder--Davis--Gundy inequality, and Theorem~\ref{thm:perturb_density}, we get that $\E[(Y^{n,\phi}_t-Y^{n,\phi}_s)^4]=O(|t-s|^2)$ uniformly in $n\geq 1$ as $|t-s|\rightarrow 0$. Moreover, representing the local time of the shifted particle $X^{i,n}_t-A^{n}_t$ through Skorokhod's map, one can exploit the Lipschitz property of the latter \cite[Lemma 1.1.1]{reflected_SDEs_book} to show that we have the control $\E[(\ell_{t\land \tau^i}^{i,n}-\ell_{s\land \tau^i}^{i,n})^4]=O(|t-s|^2)$, uniformly in $n\geq 1$, by estimating this quantity using the particle dynamics and the Gr\"onwall and Burkholder--Davis--Gundy inequalities. Finally, $I^n$ is bounded in $n\geq 1$ and has increasing sample paths, so, by the same reasoning as \cite[Proposition 4.1]{ledger_M1}, tightness follows from the criterion for tightness in the M1 topology in \cite[Proposition 4.2]{ledger_M1}.
\end{proof}

By the same reasoning as in the above proof, the c\`adl\`ag processes $Z^{n,
	\phi}$ given by
\[
Z^{n,\phi}_t:= \frac{1}{2}\partial_{x}\phi(0) \frac{1}{n}\sum_{i=1}^{n}\ell_{t\land \tau^i}^{i,n}-\phi(0)I_{t}^{n},
\]
for $\phi \in C^1(\R)$, are tight on $D_{\R}[0,T]$ equipped with the M1 topology. In particular, by \cite[Thm.~3.2]{ledger_M1}, $Z^n$ given by $\phi \mapsto (Z^{n,\phi}_t)_{t\in[0,T]}$ is tight on $D_{\mathcal{S}^\prime}[0,T]$ together with $\mu^n$. We stress that also $I^n$ is tight on $D_{\R}[0,T]$ and note that $Z^{n,
	-\mathbf{1}}=I^n$, where $ \mathbf{1}$ denotes the function that is constantly  $1$. We shall use this notation throughout the paper. Since M1-convergence implies convergence of the marginals outside a co-countable set of times, it follows from dominated convergence that, when $I^n$ tends to $I^*$, also $A^n$ and $C^n$ tend to their respective limits $A^*$ and $C^*$ defined in terms of $I^*$, as in \eqref{eq:weak_A_C_I}. It suffices to keep track of $(\mu^n,Z^n)$, due to $Z^{n,
	-\mathbf{1}}=I^n$, but we write $(\mu^n,I^n,Z^n)$ since we will be working explicitly with $I^n$ throughout. The triple $(\mu^n,I^n,Z^n)$ collects the empirical measure flow, its loss of mass, and its boundary condition.

By Burkholder--Davis--Gundy, the Brownian term in \eqref{eq:empirical_dynamics} vanishes uniformly in probability as $n\rightarrow \infty$. Since we have exponential decay in the sense of $\sup_{n\geq 1}\E[\langle \mu^n_t,e^{c (\cdot)}\rangle]<\infty$ for some $c>0$ by Theorem~\ref{thm:perturb_density}, we can upgrade the continuity of the marginal projections on $D_{\mathcal{S}^\prime}[0,T] $ (see \cite[Prop.~2.7]{ledger_M1}) to have weak convergence for each of the terms involving $\mu^n$ in \eqref{eq:empirical_dynamics}. Thus, we can deduce that any limit point $(\mu^*,I^*,Z^*)$ of $(\mu^n,I^n,Z^n)_{n\geq1}$ satisfies
\begin{equation}\label{eq:weak_limit_point}
\di \langle\mu^{*}_t,\phi\rangle	=\langle\mu_{t}^{*},\tfrac{{1}}{2}\sigma_{A^*}(t,\cdot)^{2}\partial_{xx}\phi\rangle\dt+\langle\mu_{t}^{*}, b_{A^*}(t,\cdot)\partial_{x}\phi\rangle\dt - \langle\mu_{t}^{*}, \partial_{x}\phi\rangle\di A_{t}^{*} + \di Z^{*,\phi}_t,
\end{equation}
for every $\phi \in C^\infty_b(\R)$, where the convergence of the integrals against $A^n$ is, e.g., ensured by \cite[Thm.~2.6]{sojmark_wunderlich}. Moreover, we have that $\mu^*$ is a sub-probability measure with $I^*_t+\mu^*_t([A^*_t,\infty))=1$ and we have that $A^*$ is defined as in \eqref{eq:weak_A_C_I} in terms of $I^*$. From here, the main work lies in deriving an appropriate characterisation of $Z^{*,\phi}$, in order to ultimately recover the weak formulation \eqref{eq:the_weak_formulation}--\eqref{eq:weak_A_C_I} of Definition \ref{def:weak_formulation}.

\subsection{Characterizing the limiting boundary condition}\label{subsect:characterise_limit_BC}

We start by defining the filtrations
\begin{equation}\label{eq:Filtration_F_bar}
\mathcal{\bar{{F}}}_{t}^{n}:=\sigma(X_{0}^{j},B_{s}^{j},\{s<\tau^j\}:s\in[0,t],\,j\in\{1,\ldots,n\}).
\end{equation}
These contain enough information that, for each $n\geq 1$, the processes $\langle \mu^n, \phi \rangle$ and $Z^{n,\phi}$ are adapted to $(\mathcal{\bar{{F}}}_{t}^{n})_{t\geq 0}$, for any $\phi\in C_b^\infty(\R)$. In particular, also $I^n$, $A^n$, and $C^n$ are adapted to $(\mathcal{\bar{{F}}}_{t}^{n})_{t\geq 0}$. What these filtrations do \emph{not} reveal, however, are the conditional killing probabilities. This makes it possible to uncover the following martingale property of the $Z^n$.

\begin{prop}[Martingale structure]\label{prop:In_key_martingale_result} 
	For any $\phi \in C^1(\R)$, the limit points of $(Z^{n,\phi})_{n\geq 1}$ are continuous processes. Moreover, let $\psi_{\varepsilon}$ be the probability density function of a reflected
	Brownian motion on the positive half-line at time $\varepsilon>0$, and let $H$ be a bounded continuous process $H$ adapted to the filtration $(\mathcal{\bar{{F}}}_{t}^{n})_{t\geq0}$ from \eqref{eq:Filtration_F_bar}. Then the difference
    \begin{equation}\label{eq:vanishing_difference}
 \int_0^t H_s \di Z^{n,\phi}_s-\lim_{\varepsilon\downarrow0}\int_{0}^{t}H_s\bigl(\tfrac{1}{2}\partial_x\phi(0)-\phi(0)\gamma(s,C_{s}^{n})\bigr)\langle\mu_{s}^{n},\sigma_{A^n}(s,\cdot)^{2}\psi_{\varepsilon}\rangle \ds 
    \end{equation}
is a martingale for $(\mathcal{\bar{{F}}}_{t}^{n})_{t\geq0}$ and it vanishes uniformly on compacts in $L^2(\Omega)$ as $n\rightarrow \infty$.
\end{prop}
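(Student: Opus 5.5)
We split $Z^{n,\phi}$ into a local-time part and a killing part, and treat the killing part by a compensator argument. Write
\[
Z^{n,\phi}_t=\tfrac12\partial_x\phi(0)\,\tfrac1n\textstyle\sum_i \ell^{i,n}_{t\wedge\tau^i}-\phi(0)I^n_t .
\]

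\textbf{Compensator of $I^n$.} Given the conditional law \eqref{eq:tau_i_prob}, the candidate $(\bar{\mathcal F}^n_t)$-compensator of $\mathbf 1_{\tau^i\le t}$ is $\int_0^{t\wedge\tau^i}\gamma(s,C^n_s)\,\di\ell^{i,n}_s$. The filtration $\bar{\mathcal F}^n$ sees only the indicators $\{s<\tau^j\}$, not the exponential clocks. Before $\tau^i$, the auxiliary system $\hat{\mathbf X}^{n,(-i)}$ agrees with $\mathbf X^n$, so $\hat C^{i,n}=C^n$ and $\hat\ell^{i,n}=\ell^{i,n}$ on $[0,\tau^i)$. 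Hence
\[
M^i_t:=\mathbf 1_{\tau^i\le t}-\int_0^{t\wedge\tau^i}\gamma(s,C^n_s)\,\di\ell^{i,n}_s
\]
is an $\bar{\mathcal F}^n$-martingale. This is the content of \cite[Thm.~2.5]{fausti_soj_22} behind \eqref{eq:approx_In_asymptotic}, and I would verify it directly via the Cox construction with the $\chi$'s.

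The compensator is continuous and the $\tau^i$ are a.s.\ distinct (each is totally inaccessible). Therefore $\langle M^i,M^j\rangle=0$ for $i\neq j$, and
\[
\E\Bigl[\sup_{t\le T}\Bigl(\tfrac1n\sum_i\int_0^t H\,\di M^i\Bigr)^2\Bigr]\le \tfrac{4\|H\|_\infty^2}{n}\to0
\]
by Doob's inequality. Thus
\[
\int_0^t H\,\di Z^{n,\phi}=\tfrac1n\sum_i\int_0^{t\wedge\tau^i} H_s\bigl(\tfrac12\partial_x\phi(0)-\phi(0)\gamma(s,C^n_s)\bigr)\di\ell^{i,n}_s\;-\;\phi(0)\,\tfrac1n\sum_i\int_0^t H\,\di M^i .
\]
The last term is an $\bar{\mathcal F}^n$-martingale that vanishes uniformly in $L^2$.

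\textbf{Local times as a density limit.} It remains to identify the remaining integral with the $\varepsilon\downarrow0$ limit in \eqref{eq:vanishing_difference} pathwise. Since $\psi_\varepsilon$ is a smoothed boundary delta, the occupation-time formula for the shifted continuous semimartingales $Y^i=X^{i,n}-A^n$ (with $\di\langle Y^i\rangle=\sigma_{A^n}^2\,\di s$) gives
\[
\int_0^{t\wedge\tau^i}K_s\,\sigma_{A^n}(s,Y^i_s)^2\psi_\varepsilon(Y^i_s)\,\ds=\int_0^\infty\psi_\varepsilon(a)\int_0^{t\wedge\tau^i}K_s\,\di L^a_s(Y^i)\,\di a .
\]
Here $K_s=H_s(\frac12\partial_x\phi(0)-\phi(0)\gamma(s,C^n_s))$ is continuous. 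As $\varepsilon\downarrow0$, the measure $\psi_\varepsilon(a)\,\di a$ concentrates at $0^+$. Right-continuity of $a\mapsto L^a$ then gives convergence to $\int K\,\di L^0(Y^i)$, which matches $\ell^{i,n}$ in the normalisation \eqref{eq:local_time_defn}. One must check this factor: $\psi_\varepsilon=2\times$Gaussian on the half-line, which compensates the one-sided window. So the limit in \eqref{eq:vanishing_difference} exists a.s.\ and equals the local-time integral. This also shows that the whole difference equals $-\phi(0)\frac1n\sum_i\int H\,\di M^i$, hence is a martingale.

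\textbf{Continuity of limit points.} By the above, $Z^{n,\phi}$ equals a continuous process (the local-time integrals) plus a martingale that vanishes uniformly in probability. The continuous part is tight in $C[0,T]$ by the fourth-moment bound $\E[(\ell^{i,n}_{t\wedge\tau^i}-\ell^{i,n}_{s\wedge\tau^i})^4]=O(|t-s|^2)$ from the proof of Proposition~\ref{prop:tightness}, together with boundedness of $\gamma$ on bounded sets. Kolmogorov's criterion then gives that its limit points are continuous, and hence so are those of $Z^{n,\phi}$.

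\textbf{Main obstacle.} The delicate step is rigorously justifying the compensator under $\bar{\mathcal F}^n$. The killing intensity of particle $i$ is defined through the auxiliary system, with a filtration excluding particle $i$'s clock but including the others', while $\bar{\mathcal F}^n$ includes all survival indicators. I would handle this by conditioning on $\hat{\mathcal F}^{i,n}_\infty$ and using that $\chi^{i,\cdot}$ is independent of it, then projecting onto $\bar{\mathcal F}^n$ with the agreement of the systems before $\tau^i$. The orthogonality of different $M^i$ follows from the absence of simultaneous jumps.
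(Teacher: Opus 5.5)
Your proposal is correct and follows essentially the same route as the paper. Both use the compensated martingale $M^n=I^n-\frac1n\sum_i\int_0^{\cdot\wedge\tau^i}\gamma(s,C^n_s)\,\di\ell^{i,n}_s$ from \cite[Thm.~2.5]{fausti_soj_22}, split $\int_0^t H_s\,\di Z^{n,\phi}_s$ into a local-time integral minus $\phi(0)\int_0^t H_s\,\di M^n_s$, identify the $\varepsilon\downarrow0$ limit via the generalised occupation time formula and right-continuity of the local times in the level, and get continuity of limit points from Kolmogorov's criterion with the fourth-moment bound. The one difference is that you prove the $L^2$-vanishing of $\int H\,\di M^n$ yourself via orthogonality and Doob's inequality, whereas the paper defers to \cite{fausti_soj_22}; note that total inaccessibility does not by itself exclude $\tau^i=\tau^j$, so the a.s.\ distinctness needed for $[M^i,M^j]=0$ must be read off from the construction of the particle system.
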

\begin{proof}
From \cite[Theorem~2.5]{fausti_soj_22} we have that
\begin{equation}\label{eq:infected_martingale}
M^n_t := I^n_t - \frac{1}{n}\sum_{i=1}^{n}\int_{0}^{t}\mathbf{1}_{s<\tau^{i}} \gamma(s,C_{s}^{n})\di \ell_{s}^{i,n}
\end{equation}
is a martingale that vanishes uniformly on compacts in $L^2(\Omega)$ as $n\rightarrow \infty$. As in the proof of Proposition \ref{prop:tightness}, we have an estimate $\E[(\ell_{t\land \tau^i}^{i,n}-\ell_{s\land \tau^i}^{i,n})^4]\leq C|t-s|^2$ where the constant $C$ is uniform in $i,n\geq1$, so Kolmogorov's continuity criterion allows us to confirm that the limit of the second term on the right-hand side of \eqref{eq:infected_martingale} is supported on the space of continuous paths. In view of $M^n$ vanishing, this yields the first claim. Next, we can write
\begin{equation}\label{eq:H_infected_martingale}
\int_0^t H_s \di Z^{n,\phi}_s=\frac{1}{n}\sum_{i=1}^n\int_0^{t\land \tau^i} \!\!   \bigl(  \tfrac{1}{2} \partial_x\phi(0)-\phi(0)\gamma(s,C^n_s)\bigr)H_s \di \ell^{i,n}_s - \phi(0)\int_0^t H_s \di M^n_s.
\end{equation}
Since $M^n$ is a martingale, so is the second term on the right-hand side of \eqref{eq:H_infected_martingale}. Moreover,  this term then vanishes uniformly on compacts in $L^2(\Omega)$ as $n \rightarrow \infty$ by the same reasoning as in the proof of \cite[Theorem~2.5]{fausti_soj_22}. To confirm that \eqref{eq:vanishing_difference} is a martingale, we show that the first term on the right-hand side of \eqref{eq:H_infected_martingale} equals the second term of the difference \eqref{eq:vanishing_difference}. To this end, let $\ell^{i;x}$ denote the local time \eqref{eq:local_time_defn} of $X^{i,n}$ defined along $t\mapsto A^n_{t}+x$, for any $x\geq0$. In particular, $\ell^{i;0}=\ell^{i,n}$. Applying the generalised occupation time formula \cite[Ex.~1.13,~Ch.~6]{revuz_yor}, we then get
	\[
\int_{0}^{\infty}\int_{0}^{t\land\tau^{i}}\!\!\!H_s \gamma(s,C_{s}^{n})\psi_{\varepsilon}(x)\di \ell_{s}^{i;x} \hspace{1pt} \dx=\int_{0}^{t}H_s\gamma(s,C^n_{s})\mathbf{{1}}_{s<\tau^{i}}\sigma(s,X_{s}^{i,n})^{2}\psi_{\varepsilon}(X_{s}^{i,n}-A^n_s) \ds
	\]
	and hence
\begin{equation}\label{eq:local_time_to_empirical}
	\frac{1}{n}\sum_{i=1}^{n}\int_{0}^{\infty}\int_{0}^{t\land\tau^{i}}\!\!\!H_s\gamma(s,C_{s}^{n})\psi_{\varepsilon}(x)\di \ell_{s}^{i;x} \dx=\int_{0}^{t}H_s\gamma(s,C^n_{s})\langle\mu_{s}^{n},\sigma_{A^n}(s,\cdot)^{2}\psi_{\varepsilon}\rangle \ds
	\end{equation}
	for any $\varepsilon>0$. 
   By working with the approximating Riemann sums, controlling the associated error as for $S_2(n,\varepsilon)$ in the proof of \cite[Prop.~4.2]{fausti_soj_22}, and relying on the right-continuity of each $x\mapsto \ell^{i;x}$, it follows by arguing as in the aforementioned proof that
	\[
\int_{0}^{\infty}\int_{0}^{t\land\tau^{i}}H_s\gamma(s,C^n_{s})\psi_{\varepsilon}(x) \di \ell_{s}^{i;x} \dx \rightarrow \int_{0}^{t\land\tau^{i}}H_s\gamma(s,C^n_{s}) \di  \ell^{i,n}_{s}
	\]
    almost surely as $\varepsilon \rightarrow 0$. From this and \eqref{eq:local_time_to_empirical}, we in turn conclude that
    \begin{equation}\label{eq:limit_local_time}
\lim_{\varepsilon\downarrow0}\int_{0}^{t}H_s \gamma(s,C_{s}^{n})\langle \mu_{s}^{n},\sigma_{A^n}(s,\cdot)^{2}\psi_{\varepsilon}\rangle \ds = \frac{1}{n}\sum_{i=1}^{n}\int_{0}^{t\land \tau^{i}} H_s \gamma(s,C_s^{n}) \di  \ell_{s}^{i,n}.
	\end{equation}
	Of course, the same arguments apply also without $\gamma(s,C^n_s)$ in the integrand, so we get that \eqref{eq:vanishing_difference} equals the second term on the right-hand side of \eqref{eq:H_infected_martingale}. Since the latter was shown to be a martingale with respect to $(\mathcal{\bar{{F}}}_{t}^{n})_{t\geq0}$, this completes the proof.
\end{proof}

The next lemma lets the limit in $\varepsilon$ pass through the expected values of \eqref{eq:vanishing_difference}.
\begin{lemma}\label{lemma:DCT_epsilon}
	Let everything be as in Proposition \ref{prop:In_key_martingale_result}. Then we have
	\[
	\E\Bigl[\int_{0}^{t}H_s \di  Z_{s}^{n,\phi}\Bigr]=  \lim_{\varepsilon\downarrow0}\E\Bigl[\int_{0}^{t}H_s\bigl(\frac{1}{2} \partial_x\phi(0) - \phi(0)\gamma(s,C_{s}^{n})\bigr) \langle\mu_{s}^{n},\sigma_{A^{n}}(s,\cdot)^{2}\psi_{\varepsilon}\rangle \ds\Bigr].
	\]
\end{lemma}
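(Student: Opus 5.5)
Taking expectations in Proposition \ref{prop:In_key_martingale_result} gives the identity with the limit in $\varepsilon$ inside the expectation, since the martingale in \eqref{eq:vanishing_difference} starts at zero and so has zero mean. Writing
\[
J^n_\varepsilon(t):=\int_{0}^{t}H_s\bigl(\tfrac{1}{2} \partial_x\phi(0) - \phi(0)\gamma(s,C_{s}^{n})\bigr) \langle\mu_{s}^{n},\sigma_{A^{n}}(s,\cdot)^{2}\psi_{\varepsilon}\rangle \ds ,
\]
we already know that $J^n_\varepsilon(t)\to \frac1n\sum_i\int_0^{t\land\tau^i}(\cdots)H_s\di\ell^{i,n}_s$ almost surely, by \eqref{eq:limit_local_time}. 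So the whole task is to justify exchanging $\lim_{\varepsilon\downarrow 0}$ with $\E$. I plan to do this by showing that the family $\{J^n_\varepsilon(t)\}_{\varepsilon\in(0,1]}$ is uniformly integrable for fixed $n$; a bound on second moments uniform in $\varepsilon$ suffices.

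Since $H$, $\gamma$ (on the relevant range) and $\sigma$ are bounded, we have
\[
|J^n_\varepsilon(t)|\le C\int_0^t \frac1n\sum_{i=1}^n \mathbf 1_{s<\tau^i}\,\psi_\varepsilon(X^{i,n}_s-A^n_s)\ds .
\]
The first option is to bound the conditional first moment particle by particle. We condition on $\mathcal{G}^{i,n}_T$ and note that on $\{s<\tau^i\}$ the original and auxiliary systems agree up to time $s$, so $A^n_s=\hat A^{i,n}_s$ there. Theorem \ref{thm:perturb_density} then gives
\[
\E\bigl[\mathbf 1_{s<\tau^i}\psi_\varepsilon(X^{i,n}_s-A^n_s)\mid\mathcal{G}^{i,n}_T\bigr]=\int_0^\infty\psi_\varepsilon(x)\,p^{i,n}(s,\hat A^{i,n}_s+x)\dx\le C_2 s^{-1/2},
\]
using \eqref{eq:particle_aronson} and $\int\psi_\varepsilon=1$. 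Integrating over $[0,t]$ yields $\sup_\varepsilon\E|J^n_\varepsilon(t)|\le Ct^{1/2}$.

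Uniform integrability needs slightly more. For the second moment, we expand the square of the sum, which produces diagonal terms and off-diagonal terms $i\neq j$.
\begin{itemize}
\item The off-diagonal terms are handled by Proposition \ref{prop:pairwise_bound}. The time-$s$ and time-$r$ products are bounded either by repeated conditioning on $\mathcal G^{i,j;n}_T$, or more simply by Cauchy--Schwarz in time together with the diagonal-in-time pairwise density bound $Cs^{-1}$. The integral is finite provided the singularity is integrated properly, and I would instead write $\E[(\int f_i)(\int f_j)]$ and condition sequentially.
\item For the diagonal terms, since $n$ is fixed, it is cleaner to avoid products of $\psi_\varepsilon$ altogether. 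Here I use the generalised occupation time formula: $\int_0^t\mathbf 1_{s<\tau^i}\sigma^2\psi_\varepsilon(X^{i,n}_s-A^n_s)\ds=\int_0^\infty\psi_\varepsilon(x)\ell^{i;x}_{t\land\tau^i}\dx$. Hence $\E[(\cdot)^2]\le \sup_{x}\E[(\ell^{i;x}_t)^2]$ by Jensen with respect to the probability density $\psi_\varepsilon$.
\end{itemize}
The moments of $\ell^{i;x}_t$ are controlled uniformly in $x\ge0$ via Tanaka's formula applied to $(X^{i,n}-A^n-x)^-$. Together with the Lipschitz drift, bounded $\sigma$, BDG and the bounded variation of $A^n$ (bounded derivative by \eqref{eq:A_is_AC}), this gives $\E[(\ell^{i;x}_t)^2]\le C$ uniformly in $x$. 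In fact this argument alone handles the whole sum, because $(\frac1n\sum_i a_i)^2\le\frac1n\sum_i a_i^2$, so the pairwise estimate is not even needed.

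This yields $\sup_{\varepsilon}\E[J^n_\varepsilon(t)^2]<\infty$, hence uniform integrability, and Vitali's theorem combined with \eqref{eq:limit_local_time} and the zero mean of the martingale concludes the proof. The main obstacle is the uniform-in-$x$ moment bound on the local times $\ell^{i;x}$ along the shifted curves $A^n+x$, particularly up to the killing time. I would obtain it first for the fully reflected auxiliary particle, where Tanaka's formula is clean, and then use $\ell^{i;x}_{t\land\tau^i}\le\hat\ell^{i;x}_t$, which holds since the systems coincide before $\tau^i$.
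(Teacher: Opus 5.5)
Your proof is correct, but it justifies the exchange of $\lim_{\varepsilon\downarrow0}$ and $\E$ differently from the paper.

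The paper uses the occupation-time identity \eqref{eq:local_time_to_empirical} to dominate your $J^n_\varepsilon(t)$ pathwise by $\frac{c}{n}\sum_i \sup_{x\geq 0}\ell^{i;x}_t$. This bound does not depend on $\varepsilon$. It is integrable by the Barlow--Yor BDG-type inequality for the supremum of local times over levels, so dominated convergence applies directly.

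You instead apply Jensen's inequality with respect to the probability density $\psi_\varepsilon$, which moves the supremum over levels outside the expectation. You then only need $\sup_{x\geq0}\E[(\ell^{i;x}_{t\land\tau^i})^2]<\infty$. This follows from Tanaka's formula one level at a time, using only second moments of $\sup_{s\leq t}|X^{i,n}_s|$ and of $\ell^{i;0}_t$, which are available as in the proof of Proposition \ref{prop:tightness}. You conclude by uniform integrability, Vitali, and the almost sure limit from \eqref{eq:limit_local_time}.

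\textbf{What each route buys.} Yours is more elementary: it avoids the maximal inequality over levels and needs only fixed-level second moments. The paper's route produces a single integrable dominating variable and is shorter given the citation.

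\textbf{What you can drop.}
\begin{itemize}
\item The first-moment bound via Theorem \ref{thm:perturb_density} is not needed.
\item The off-diagonal sketch is not needed either, and as written it would not work directly: Proposition \ref{prop:pairwise_bound} only controls the joint law at equal times. The inequality $(\frac1n\sum_i a_i)^2\leq\frac1n\sum_i a_i^2$ already reduces everything to the diagonal estimate.
\item The comparison with the auxiliary particle is optional. You can apply Tanaka's formula directly to the stopped continuous semimartingale $X^{i,n}_{\cdot\land\tau^i}-A^n_{\cdot\land\tau^i}$.
\end{itemize}
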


\begin{proof}
	Fix $\varepsilon>0$. Recalling the equality in \eqref{eq:local_time_to_empirical}, we can exploit the monotonicity of the local times along with the boundedness of $\gamma$ and $H$ to see that
	\begin{align*}
		\Bigl|\int_{0}^{t}H_s\bigl( \tfrac{1}{2}\partial_x\phi(0)-&\phi(0)\gamma(s,C_{s}^{n})\bigr)\langle\mu_{s}^{n},\sigma_{A^n}(s,\cdot)^{2}\psi_{\varepsilon}\rangle \ds\Bigr|  \leq\frac{c}{n}\sum_{i=1}^{n}\int_{0}^{\infty}\int_{0}^{t\land\tau^i}\psi_{\varepsilon}(x) \di  \ell_{s}^{i;x} \dx\\
		& = \frac{c}{n}\sum_{i=1}^{n}\int_{0}^{t\land\tau^i}\!\!\!\!\int_{0}^{\infty}\psi_{\varepsilon}(x) \dx \di \ell_{s}^{i;x} \leq\frac{c}{n}\sum_{i=1}^{n}\sup_{x\geq0}\ell_{t}^{i;x},
	\end{align*}
for a constant $c>0$, where the last line used that $\psi_\varepsilon$ integrates to 1. This bound is independent of $\varepsilon>0$, and we have that $\sup_{x\geq0}\ell_{t}^{i;x}$ is finite in expectation by Barlow--Yor's BDG type inequality for the local times of continuous semimartingales \cite[Page~199]{barlow_yor} (see also \cite[Ch.~XI,~Thm.~2.4]{revuz_yor}). Using that the difference \eqref{eq:vanishing_difference} is a martingale, we can thus take expectations in \eqref{eq:vanishing_difference} and apply dominated convergence as $\varepsilon \downarrow 0$ to obtain the desired result.
\end{proof}

We now obtain the following characterisation of the boundary condition for \eqref{eq:weak_limit_point}. We shall write $Z^n$ for the process given by the assignment $\phi \mapsto Z^{n,\phi}$, and likewise in the limit.

\begin{prop}\label{prop:f_integral_converge} Consider any given limit point $(\mu^*,I^*,Z^*)$ of $(\mu^n,I^n,Z^n)$.  Let $f: [0,T]\times D_{\R}[0,T]\times D_{\mathcal{S}^\prime}[0,T] \rightarrow \R$ be a bounded function that is continuous with respect to uniform convergence. Setting $f(s):=f(s,I_{\cdot \land s}^{*},\mu^*_{\cdot \land s})$ we have
	\[
\E \Bigl[\int_{0}^{t}f(s) \di  Z_{s}^{*,\phi}\Bigr]=\lim_{\varepsilon\downarrow0}\E \Bigl[\int_{0}^{t}f(s)\bigl(\tfrac{1}{2} \partial_x\phi(0)-\phi(0)\gamma(s,C_{s}^{*})\bigr)\langle\mu_{s}^{*},\sigma_{A^{*}}(s,\cdot)^{2}\psi_{\varepsilon}\rangle \ds\Bigr]
	\]
    for any $\phi \in C^\infty(\R)$.
\end{prop}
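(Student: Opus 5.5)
The plan is to pass to the limit $n\to\infty$ in Lemma \ref{lemma:DCT_epsilon}, applied with $H_s:=f(s,I^n_{\cdot\land s},\mu^n_{\cdot\land s})$. This $H$ is $(\bar{\mathcal F}^n_t)$-adapted and bounded, although it need not be continuous in $s$. I will handle this by a preliminary approximation: replace $f(s)$ by the time-averaged version $f_\delta(s):=\delta^{-1}\int_{(s-\delta)\vee 0}^{s} f(r)\,\mathrm{d}r$. This is continuous in $s$, remains adapted, and converges to $f(s)$ at every continuity point. By Proposition \ref{prop:In_key_martingale_result}, $Z^{*,\phi}$ is continuous and of finite variation, being bounded by the limits of the local-time averages and of $I^n$. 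Hence both sides of the identity are stable as $\delta\downarrow0$ by dominated convergence. The right-hand side is stable once the bound from the next paragraph is in place.

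For fixed $\delta$, I then take $n\to\infty$ along the subsequence realising the limit point.

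\emph{Left-hand side.} By Skorokhod representation we may assume almost sure M1 convergence of $(\mu^n,I^n,Z^n)$. The limits $Z^{*,\phi}$ and $I^*$ are continuous, so M1 convergence upgrades to uniform convergence for these components. The integrand $f_\delta(\cdot)$ evaluated along $(I^n,\mu^n)$ converges uniformly to its limit, because $f$ is continuous for the uniform topology, the time-average smooths the jumps, and marginal convergence holds at all but countably many times. Since $Z^{n,\phi}$ is a difference of two increasing processes with uniformly integrable total variation (by the fourth-moment local-time bounds), $\int_0^t H^n_s\,\mathrm{d}Z^{n,\phi}_s$ converges in $L^1$ to the corresponding limiting integral.

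\emph{Right-hand side.} This is the main obstacle: we must interchange $\lim_n$ with $\lim_{\varepsilon}$. I will prove
\[
\sup_{n}\,\mathbb{E}\Bigl[\int_0^t\bigl|\langle\mu^n_s,\sigma_{A^n}^2\psi_\varepsilon\rangle-\langle\mu^n_s,\sigma_{A^n}^2\psi_{\varepsilon'}\rangle\bigr|\,\mathrm{d}s\Bigr]\to0 \quad\text{as } \varepsilon,\varepsilon'\downarrow0,
\]
together with the analogous statement with the factor $\gamma(s,C^n_s)H_s$ inserted. To do this, write $\langle\mu^n_s,g\rangle=\frac1n\sum_i \mathbf 1_{s<\tau^i}g(X^{i,n}_s-A^n_s)$. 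The key point is to compare $A^n$ with $\hat A^{i,n}$: they differ by at most $O(1/n)$ through $\alpha\|\varrho\|_\infty$, and similarly for $C^n$. Then condition on $\mathcal G^{i,n}_T$ and use the continuous conditional density $p^{i,n}$ from Theorem \ref{thm:perturb_density}. This gives $\int\psi_\varepsilon(x)\sigma^2 p^{i,n}(s,\hat A^{i,n}_s+x)\,\mathrm{d}x\to\sigma^2p^{i,n}(s,\hat A^{i,n}_s)$, uniformly thanks to the Aronson bound $Cs^{-1/2}$, which is integrable in $s$. The adapted factor $H_s$ is not $\mathcal G^{i,n}$-measurable because it depends on particle $i$. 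I replace it by its counterpart built from the reduced system $\hat{\mathbf X}^{n,(-i)}$, which differs only by $O(1/n)$ in $I^n$ and in the empirical measure. Continuity of $f$ then makes the resulting error vanish in $L^1$. To establish the needed uniform continuity of $p^{i,n}$ at the boundary, uniformly in $i$ and $n$, I will use the perturbation formula \eqref{eq:density_rep-1}: each of its terms has a modulus of continuity controlled by the uniform bounds \eqref{eq:lamperti_aronson}. If uniformity proves delicate, I will instead use the pairwise estimate of Proposition \ref{prop:pairwise_bound} to obtain $L^2$ control of the difference directly.

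Given this uniform Cauchy property in $\varepsilon$, for each fixed $\varepsilon$ the map $\mu\mapsto\langle\mu,\sigma^2_{A}\psi_\varepsilon\rangle$ is continuous, with $\psi_\varepsilon$ smooth and Gaussian-decaying. The uniform exponential moments justify convergence of the expectation as $n\to\infty$, with $C^n\to C^*$ as noted before \eqref{eq:weak_limit_point}. A standard $3\epsilon$ argument then interchanges the limits, yielding the identity for $f_\delta$. Letting $\delta\downarrow0$ completes the proof.
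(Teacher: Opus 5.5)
Your treatment of the right-hand side follows the paper's proof. You condition particle by particle on $\mathcal G^{i,n}_T$, replace the adapted weight by its reduced-system counterpart, use a boundary modulus of continuity of $p^{i,n}$ that is uniform in $i,n$ (from \eqref{eq:density_rep-1} and \eqref{eq:lamperti_aronson}), and send $n\to\infty$ at a fixed mollification level. Your left-hand side (time-averaging $f$, Skorokhod representation, upgrading M1 to uniform convergence at the continuous limit, convergence of Stieltjes integrals) is a hands-on substitute for \cite[Theorem~3.6]{sojmark_wunderlich}. It also makes the weight genuinely continuous, as Lemma \ref{lemma:DCT_epsilon} requires; the paper passes over this when it applies that lemma to $f_n$.

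The gap is the estimate you single out as the main step. The claim $\sup_n\E\bigl[\int_0^t\bigl|\langle\mu^n_s,\sigma_{A^n}(s,\cdot)^2(\psi_\varepsilon-\psi_{\varepsilon'})\rangle\bigr|\ds\bigr]\to0$ is false, already for each fixed $n$. By the argument behind \eqref{eq:limit_local_time}, as $\varepsilon\downarrow0$ the measures $\langle\mu^n_s,\sigma_{A^n}(s,\cdot)^2\psi_\varepsilon\rangle\ds$ converge pathwise to $\frac1n\sum_i\mathbf 1_{s<\tau^i}\di\ell^{i,n}_s$. This limit is singular with respect to $\ds$ and non-zero with positive probability. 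If the densities were Cauchy in $L^1(\Omega\times[0,t])$, a subsequence would converge in $L^1([0,t])$ for almost every $\omega$, and the limit would be absolutely continuous. (For $n=1$ and $\varepsilon'=\varepsilon/2$, scaling shows the expectation tends to a strictly positive multiple of $\E[\ell^{1,1}_{t\wedge\tau^1}]$.)

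Your method could not have produced this bound anyway. Conditioning one particle at a time on $\mathcal G^{i,n}_T$ only controls quantities linear in $\mu^n$. The cancellation between $\psi_\varepsilon$ and $\psi_{\varepsilon'}$ is visible in $\frac1n\sum_i\E\bigl[\int_0^t\hat H^{i,n}_s\mathbf 1_{s<\tau^i}\sigma(s,X^{i,n}_s)^2(\psi_\varepsilon-\psi_{\varepsilon'})(X^{i,n}_s-A^n_s)\ds\bigr]$, with the $\mathcal G^{i,n}_T$-measurable reduced-system weight $\hat H^{i,n}$ (absorbing the $\gamma$ factor) pulled out. It is not visible in the modulus. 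Proposition \ref{prop:pairwise_bound} cannot rescue the modulus either: it is only an upper bound on pair densities (seeing the cancellation would need their continuity at the corner), and its diagonal terms are of order $n^{-1}\varepsilon^{-1/2}s^{-1/2}$.

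The repair is to use only the signed estimate. This is precisely what the paper proves, and it is all your $3\epsilon$ interchange of $\lim_n$ and $\lim_\varepsilon$ needs. The error from swapping $H$ for $\hat H^{i,n}$ is controlled uniformly in $\varepsilon$ by the local-time bound from the proof of Lemma \ref{lemma:DCT_epsilon}.

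Removing your time average on the right-hand side does not need Cauchy-ness. It needs only integrability of $\langle\mu^*_s,\sigma_{A^*}(s,\cdot)^2\psi_\varepsilon\rangle$ on $\Omega\times[0,t]$, uniform in $\varepsilon$. Proposition \ref{prop:pairwise_bound} together with \eqref{eq:particle_aronson} does give this. They yield $\E[\langle\mu^n_s,\sigma_{A^n}(s,\cdot)^2\psi_\varepsilon\rangle^2]\le Cs^{-1}+Cn^{-1}\varepsilon^{-1/2}s^{-1/2}$. Hence $\E[\langle\mu^*_s,\sigma_{A^*}(s,\cdot)^2\psi_\varepsilon\rangle^2]\le Cs^{-1}$, which is a bound in $L^p(\Omega\times[0,t])$, uniform in $\varepsilon$, for any $p\in(1,2)$.

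Finally, $A^n$ and $\hat A^{i,n}$ need not be $O(1/n)$-close after $\tau^i$, since an extra infection feeds back through the front. What the conditioning step uses, and what holds, is that on $\{s<\tau^i\}$ the full and reduced systems coincide on $[0,s]$. So $A^n=\hat A^{i,n}$ and $C^n=\hat C^{i,n}$ there exactly, and the only discrepancy in the weight is the atom of particle $i$ in $\mu^n$.
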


\begin{proof}
Set $f_n(s):=f(s,I_{\cdot \land s}^{n},\mu^n_{\cdot \land s})$. By Proposition \ref{prop:In_key_martingale_result} we can apply \cite[Theorem 3.6]{sojmark_wunderlich} and uniform integrability to confirm that
	\begin{equation}\label{eq:integral_convergence}
			\E \Bigl[\int_{0}^{t}f(s) \di  Z_{s}^{*,\phi}\Bigr] =\lim_{n\rightarrow\infty}\E \Bigl[\int_{0}^{t}f_n(s) \di  Z_{s}^{n,\phi}\Bigr].
	\end{equation}
Set also $f_{n,\phi}(s):=f(s,I_{\cdot \land s}^{n},\mu^n_{\cdot \land s})(\tfrac{1}{2}\partial_x\phi(0)-\phi(0)\gamma(s,C_{s}^{n}))$. By \eqref{eq:integral_convergence} and Lemma~\ref{lemma:DCT_epsilon}, we then have
\begin{equation}\label{eq:eps_then_n}
\E \Bigl[\int_{0}^{t}f(s) \di  Z_{s}^{*,\phi}\Bigr] = \lim_{n\rightarrow \infty} \lim_{\varepsilon \downarrow 0 }\E \left[\int_{0}^{t}f_{n,\phi}(s)\langle \mu_{s}^{n},\sigma_{A^{n}}(s,\cdot)^{2}\psi_{\varepsilon}\rangle \ds\right] .
\end{equation}
	Fix an arbitrary $\delta>0$ and write
	\begin{align}
&\E \left[\int_{0}^{t}f_{n,\phi}(s)\langle \mu_{s}^{n},\sigma_{A^{n}}(s,\cdot)^{2}\psi_{\varepsilon}\rangle \ds\right]  =\E \left[\int_{0}^{t}f_{n,\phi}(s)\langle \mu_{s}^{n},\sigma_{A^{n}}(s,\cdot)^{2}\psi_{\delta}\rangle \ds\right]\nonumber \\
		& \qquad \qquad \qquad \qquad +\E \left[\int_{0}^{t}f_{n,\phi}(s)\langle \mu_{s}^{n},\sigma_{A^{n}}(s,\cdot)^{2}(\psi_{\varepsilon}-\psi_{\delta})\rangle \ds\right].\label{eq:uni_cont_limit}
	\end{align}
 Set
\[
\hat{f}_{i,n,\phi}(t):=f(t,\hat{I}^{i,n}_t,\hat{\mu}_t^{i,n})(\tfrac{1}{2} \partial_x\phi(0)-\phi(0)\gamma(t,\hat{C}_{t}^{n})),\quad \hat{\mu}_t^{i,n}:=\frac{1}{n}\sum_{j\neq i} \mathbf{1}_{t<\tau^j}\delta_{\hat{X}^{j,n,(-i)}_t}.
\]
For each $i=1,\ldots,n$, we can then observe that
	\begin{align*}
&\E \left[\mathbf{1}_{s<\tau^{i}}f_{n,\phi}(s)\sigma_{A^{n}}(s,X_{s}^{i,n}-A^n_s)^{2}(\psi_{\varepsilon}(X_{s}^{i,n}-A^n_s)-\psi_{\delta}(X_{s}^{i,n}-A^n_s))\right]
        \\ & \quad	=\E \left[\hat{f}_{i,n,\phi}(s)\E \bigl[\mathbf{1}_{s<\tau^{i}}\sigma(s,X_{s}^{i,n})^{2}\bigl(\psi_{\varepsilon}(X_{s}^{i,n}-\hat{A}^{i,n}_s)-\psi_{\delta}(X_{s}^{i,n}-\hat{A}^{i,n}_s)\bigr)\mid\mathcal{{G}}_{T}^{i,n}\bigr]\right] 
        \\
        &\quad =\E \left[\hat{f}_{i,n,\phi}(s) \int_{\hat{A}_s^{i,n}}^\infty 
        \sigma(s,x)^{2}\bigl(\psi_{\varepsilon}(x-\hat{A}^{i,n}_s)-\psi_{\delta}(x-\hat{A}^{i,n}_s)\bigr)p^{i,n}(s,x) \dx
        \right]  ,
	\end{align*}
	by the definition of $\mathcal{{G}}_{T}^{i,n}$ in \eqref{eq:G_filtration}, where the last equality follows from Theorem~\ref{thm:perturb_density}. In addition to the continuity of $p^{i,n}$ in Theorem~\ref{thm:perturb_density}, we can verify from \eqref{eq:lamperti_aronson} and \eqref{eq:density_rep-1} that, for any $\theta >0$, there exists $c(\theta)>0$ so that $|\sigma(s,x)^{2}p^{i,n}(s,x)-\sigma(s,y)^{2}p^{i,n}(s,y)|\leq \theta/2$, for all $x,y\in[\hat{A}_s^{i,n}, \hat{A}_s^{i,n}+c(\theta)]$, uniformly in $n\geq 1$. Thus, we can estimate
    \begin{equation*}   \Bigl| \int_{\hat{A}_s^{i,n}}^\infty 
\sigma(s,x)^{2}p^{i,n}(s,x)\bigl(\psi_{\varepsilon}(x-\hat{A}^{i,n}_s)-\psi_{\delta}(x-\hat{A}^{i,n}_s)\bigr) \dx \Bigr| \leq \theta + \kappa \bigl(e^{-c(\theta)^2/2\varepsilon} +e^{-c(\theta)^2/2\delta}\bigr),  
    \end{equation*}
for a constant $\kappa >0$, uniformly in $n\geq 1$, by splitting the integral on $[\hat{A}_s^{i,n}, \hat{A}_s^{i,n}+c(\theta)]$ and its complement. Consequently, 
    \[
\limsup_{\varepsilon,\delta \downarrow 0}\,\limsup_{n\rightarrow \infty}\,\biggl| \,\E \left[\int_{0}^{t}\hat{f}_{i,n,\phi}(s)\langle \mu_{s}^{n},\sigma_{A^{n}}(s,\cdot)^{2}(\psi_{\varepsilon}-\psi_{\delta})\rangle \ds\right] \biggr| \leq \theta,
\]
for any $\theta>0$, and hence the right-hand side is zero. From this and \eqref{eq:eps_then_n}--\eqref{eq:uni_cont_limit}, we can now conclude that
\begin{align}
\E \Bigl[\int_{0}^{t}f(s) \di  Z_{s}^{*,\phi}\Bigr] &= \lim_{\delta\downarrow 0} \lim_{n\rightarrow \infty} \E \left[\int_{0}^{t}f_{n,\phi}(s)\langle \mu_{s}^{n},\sigma_{A^{n}}(s,\cdot)^{2}\psi_{\delta}\rangle \ds\right] \nonumber\\
&=\lim_{\delta\downarrow 0} \E \left[\int_{0}^{t}f(s)\bigl(\tfrac{1}{2} \partial_x\phi(0)-\phi(0)\gamma(s,C_{s}^{*})\bigr)\langle \mu_{s}^{*},\sigma_{A^{*}}(s,\cdot)^{2}\psi_{\delta}\rangle \ds\right]\label{eq:delta_to_zero}
\end{align}
which completes the proof.
\end{proof}

For any given limit point $(\mu^*,I^{*})$ on $D_{\mathcal{S}^\prime}[0,T] \times D_{\R}[0,T]$, let $(\mathcal{F}^*_t)_{t\geq0}$ be the corresponding filtration defined by
\begin{equation}\label{eq:limit_filtration}
\mathcal{F}^*_t := \sigma\bigl(\langle \mu^*_s, \phi \rangle ,I^*_s : s\in[0,t], \phi \in \mathcal{S}\bigr).
\end{equation}
\begin{corollary} For any $\phi\in C^1(\R)$ and any bounded continuous process $H$ adapted to $(\mathcal{F}^*_t)_{t\geq 0}$, we have
	\[
	\E \Bigl[\int_{0}^{t}H_s  \di  Z_{s}^{*,\phi}\Bigr]=\lim_{\delta\downarrow0}\E \Bigl[\int_{0}^{t}H_s\bigl( \tfrac{1}{2}\partial_x\phi(0)-\phi(0)\gamma(s,C_{s}^{*})\bigr)\langle\mu_{s}^{*},\sigma_{A^{*}}(s,\cdot)^{2}\psi_{\delta}\rangle \ds\Bigr].
	\]
	\end{corollary}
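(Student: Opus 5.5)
The plan is to reduce the corollary to Proposition~\ref{prop:f_integral_converge} by approximating $H$ with processes of the special form $f(s)=f(s,I^*_{\cdot\land s},\mu^*_{\cdot\land s})$, where $f$ is bounded and continuous for the uniform topology. Both sides of the claimed identity are linear in $H$, and both are continuous under bounded convergence of $H$ in a suitable sense. So it suffices to show that $H$ can be approximated by such functionals, in a way that passes through both the $\di Z^{*,\phi}$-integral and the $\lim_{\delta\downarrow0}$ on the right-hand side.

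First, reduce to $\phi\in C^\infty$. For $\phi\in C^1$, the processes $Z^{*,\phi}$ depend on $\phi$ only through $(\phi(0),\partial_x\phi(0))$, both on the left and on the right. By linearity, $Z^{*,\phi}=\tfrac12\partial_x\phi(0)Z^{*,\mathbf{x}}-\phi(0)\cdot(\text{the }I^*\text{ part})$, as read off from the definition of $Z^{n,\phi}$ before passing to the limit. We may therefore replace $\phi$ by a smooth function with the same value and derivative at $0$.

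Second, approximate $H$. Since $H$ is bounded, continuous and $(\mathcal{F}^*_t)$-adapted, a standard monotone class argument gives the following: $H_s$ is an $L^1$ limit of bounded continuous functions of finitely many coordinates $\langle\mu^*_{s_k},\phi_k\rangle, I^*_{s_k}$ with $s_k\le s$. To obtain continuity in the uniform topology and adaptedness, replace point evaluations by time averages $\frac1h\int_{(s_k-h)\vee0}^{s_k}\cdot\,\di r$, and discretise in time, $H^m_s=H_{\lfloor sm\rfloor/m}$ smoothed in $s$. This yields bounded $f_m(s)$ of the form required by Proposition~\ref{prop:f_integral_converge}, uniformly bounded by $\|H\|_\infty$, with $f_m(s)\to H_s$ in probability for a.e.~$s$ (indeed for all $s$ by continuity of $H$).

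Third, pass to the limit in both sides. On the left, $Z^{*,\phi}$ has continuous paths (Proposition~\ref{prop:In_key_martingale_result}). Moreover it is of bounded variation with total variation controlled by $|\partial_x\phi(0)|\lim_n\frac1n\sum_i\ell^{i,n}_t+|\phi(0)|I^*_t$, which is integrable by Barlow--Yor as in Lemma~\ref{lemma:DCT_epsilon}. Dominated convergence then gives $\E[\int f_m\di Z^{*,\phi}]\to\E[\int H\di Z^{*,\phi}]$.

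The main obstacle is the right-hand side, where the $\delta$-limit must be interchanged with the $m$-limit. I would get uniformity in $\delta$ from the bound
\[
\E\Bigl[\int_0^t\langle\mu^*_s,\sigma_{A^*}(s,\cdot)^2\psi_\delta\rangle\ds\Bigr]\le C,
\]
uniformly in $\delta$, together with uniform integrability. This follows by weak convergence from the density bound of Theorem~\ref{thm:perturb_density}: $\E\langle\mu^n_s,\psi_\delta\rangle\le C s^{-1/2}$, via the conditional densities $p^{i,n}$. Better still, the same conditional-density argument shows that the measures $\langle\mu^*_s,\sigma^2_{A^*}\psi_\delta\rangle\ds$ are bounded by $Cs^{-1/2}\ds$ in $L^2$-averaged sense; the pairwise bound of Proposition~\ref{prop:pairwise_bound} controls the second moment $\E[\langle\mu^n_s,\psi_\delta\rangle^2]\le C s^{-1}+O(1/n)\delta^{-1}$. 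Then $|\E\int(f_m-H)(\ldots)\ds|\le C\,\E[\int|f_m-H|^2 s^{-1/2}\ds]^{1/2}\cdot(\ldots)^{1/2}$ is small uniformly in $\delta$, so the limits commute. This completes the plan.
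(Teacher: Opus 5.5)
Your plan is correct, but you justify the key step, exchanging $\lim_{\delta\downarrow0}$ with the approximation $f_m\to H$, differently from the paper.

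\emph{The paper's route.} The paper never works directly with $\mu^*$ here. It re-runs the proof of Proposition~\ref{prop:f_integral_converge} with $f_m$ in place of $f$. The particle-level control of the $\psi_\varepsilon-\psi_\delta$ term comes from the uniform-in-$n$ continuity of $\sigma^2p^{i,n}$ at the boundary, and it depends on $f$ only through $\|f\|_\infty$. So one can send $n\to\infty$, then $m\to\infty$ at fixed $\delta$, and only then $\delta\downarrow0$. At fixed $\delta$ one has $\langle\mu^*_s,\sigma_{A^*}(s,\cdot)^2\psi_\delta\rangle\le C\delta^{-1/2}$, so bounded convergence handles the $m$-limit.

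\emph{Your route.} You use Proposition~\ref{prop:f_integral_converge} as a black box. You make the error $\E[\int_0^t(f_m(s)-H_s)(\cdots)\langle\mu^*_s,\sigma_{A^*}(s,\cdot)^2\psi_\delta\rangle\ds]$ small uniformly in $\delta$ via the bound $\E[\langle\mu^*_s,\psi_\delta\rangle^2]\le Cs^{-1}$. This bound does follow from Proposition~\ref{prop:pairwise_bound}, exactly as in the proof of Lemma~\ref{lemma:error_terms_decay_linear_2}: off-diagonal terms via the conditional joint density on $\{s<\tau^i\wedge\tau^j\}$, diagonal terms of order $O(n^{-1}\delta^{-1})$, then weak convergence at fixed $\delta$. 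Cauchy--Schwarz in $(\omega,s)$ with the integrable weight $s^{-1/2}$ then finishes the job.

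\emph{Comparison.} Your argument is more modular and yields an $L^2$ control of the boundary mass of the limit that is uniform in $\delta$. The cost is a two-particle estimate. The paper's argument needs only one-particle conditional densities, but it has to reopen the particle-level argument.

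\emph{One point to tighten: the left-hand side.} Convergence $f_m(s)\to H_s$ in probability for each fixed $s$ does not by itself justify dominated convergence in $\E[\int_0^t f_m(s)\,\di Z^{*,\phi}_s]$. The reason is that $\di Z^{*,\phi}$ is a random measure that may charge random times correlated with $f_m-H$. Your grid construction fixes this easily:
\begin{itemize}
\item for each $m$, only finitely many variables $H_{k/m}$ need approximating, so choose approximations with summable $L^1$ errors;
\item Borel--Cantelli together with the path continuity of $H$ then gives $\sup_{s\le t}|f_m(s)-H_s|\to0$ almost surely.
\end{itemize}
Alternatively, you can use uniform integrability of the increments of the limiting local-time process. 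The paper is equally brief about the mode in which $f_m$ converges to $H$.
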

\begin{proof}
	Take a sequence of bounded continuous functions $f_m$ to which Proposition \ref{prop:f_integral_converge} applies. Let these be such that $f_m(t,I^*_{\cdot \land t},\mu^*_{\cdot \land t})$ converges to $H$ as $m\rightarrow \infty$. In turn, $f_m(t,I^n_{\cdot \land t},\mu^n_{\cdot \land t})$ converges to $H$ as first $n \rightarrow \infty$ and then $m\rightarrow \infty$. Repeating the proof of Proposition \ref{prop:f_integral_converge}, we are able to send first $n\rightarrow \infty$ and then $m\rightarrow \infty$ in the first term of the corresponding analogue of \eqref{eq:uni_cont_limit}, before finally sending $\delta \rightarrow 0$, as in  \eqref{eq:delta_to_zero}. In this way, the claim follows. 
\end{proof}

In view of all of the above, we have shown the following.

\begin{prop}[Randomized measure-valued weak solution]\label{prop:rand_weak_solution} Let $(\mu_t,I_t,Z_t)_{t\in[0,T]}$ be a limit point of $(\mu^n_t,I^n_t,Z^n_t)_{t\in[0,T]}$. Then, $\mu=(\mu_t)_{t\in[0,T]}$ is a stochastic process taking values in $\mathcal{M}_{\leq 1}(\R^+)$ with $\mu_0=P_0\circ \theta_{a_0}^{-1}$ and it satisfies
\begin{equation}\label{eq:weak_form_1}
\di \langle\mu_t,\phi\rangle	=\langle\mu_{t},\tfrac{{1}}{2}\sigma_{A}(t,\cdot)^{2}\partial_{xx}\phi\rangle\dt+\langle\mu_{t}, (b_{A}(t,\cdot)-A^\prime_{t})\partial_{x}\phi\rangle\dt + \di Z^{\phi}_t,
\end{equation}
for all $t\in[0,T]$ and all $\phi \in C^{2}_b(\R)$, almost surely, with 
\begin{equation}\label{eq:bc_as_limit}
\E\biggl[\int_0^t H_s \di Z_t^{ \phi}\biggr]
= \lim_{\delta \downarrow 0}  \E\biggl[\int_0^t H_s \bigl( \tfrac{1}{2} \partial_x \phi(0) - \phi(0) \gamma(s, C_s) \bigr)\langle \mu_s, \sigma_A(s, \cdot)^2 \psi_\delta  \rangle \ds \biggr],
\end{equation}
for all $t\in[0,T]$, all $\phi\in C_b^2(\R)$, and
all bounded continuous processes $H$ adapted to the filtration \eqref{eq:limit_filtration} generated by $(\mu,I)$, where
\begin{equation}\label{eq:randomized_A_C_I}
A_t = a_0 + \alpha\int_{t-\bar{d}}^t \varrho(t-s) I_{s} \ds,\quad C_t = \int_{t-\bar{d}}^t \varrho(t-s) (I_{s} - I_{s-\bar{d
}})\ds,\quad I_t= 1- \mu_t(\R^+).
\end{equation}
\end{prop}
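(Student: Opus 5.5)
This proposition collects what has already been established, so the plan is to assemble the pieces from the preceding subsections, checking each property of the limit point in turn. By Proposition \ref{prop:tightness} and the remark following it, the triple $(\mu^n,I^n,Z^n)$ is tight on $D_{\mathcal{S}^\prime}[0,T]\times D_{\R}[0,T]\times D_{\mathcal{S}^\prime}[0,T]$ with the M1 topology. We fix a subsequential limit $(\mu,I,Z)$ and, by Skorokhod representation, assume almost sure M1 convergence.

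\textbf{Mass, support and initial condition.} Each $\mu^n_t$ is a sub-probability measure on $\R^+$, since particles are shifted by $A^n_t$ and live on $[A^n_t,\infty)$. Moreover $I^n_t = 1-\mu^n_t(\R^+)$. The uniform exponential moments from Theorem \ref{thm:perturb_density} prevent mass escaping to infinity, so these properties pass to the limit at all continuity times. They then extend to all $t$ by right-continuity, giving $\mu_t\in\mathcal{M}_{\leq 1}(\R^+)$ and $I_t=1-\mu_t(\R^+)$. The initial condition follows from Assumption \ref{assump:ic_assumptions}, since $\mu^n_0 = P^n_0\circ\theta_{a_0}^{-1}\to P_0\circ\theta_{a_0}^{-1}$. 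As observed after Proposition \ref{prop:tightness}, $A^n$ and $C^n$ converge to the $A$ and $C$ defined from $I$ by \eqref{eq:randomized_A_C_I}, via dominated convergence along co-countable sets of times.

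\textbf{Dynamics \eqref{eq:weak_form_1}.} This is \eqref{eq:weak_limit_point}, which was derived for $\phi\in C^\infty_b$. Because $I$ is nondecreasing and bounded, \eqref{eq:A_is_AC} shows $A$ is Lipschitz. Hence $\di A_t = A'_t\dt$, and the $-\langle\mu_t,\partial_x\phi\rangle\di A_t$ term merges into the drift as written.

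To extend from $C^\infty_b$ to $C^2_b$, we approximate $\phi\in C^2_b$ by mollifications $\phi_k\to\phi$ with $\phi_k,\partial_x\phi_k,\partial_{xx}\phi_k$ converging locally uniformly and bounded. The $\mu$-integrals converge by dominated convergence, since $\mu_t$ has finite mass. For the boundary term, $Z^{\phi}$ depends on $\phi$ only through $(\phi(0),\partial_x\phi(0))$ via $Z^{n,\phi}$. We therefore define $Z^{\phi}$ by linearity as $\tfrac12\partial_x\phi(0)Z^{(1)}-\phi(0)I$, where $Z^{(1)}$ is the limit of $\frac1n\sum_i\ell^{i,n}_{\cdot\wedge\tau^i}$. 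This is consistent with the $C^\infty_b$ case, since the map $\phi\mapsto Z^{n,\phi}$ is linear in these two values.

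\textbf{Boundary characterisation \eqref{eq:bc_as_limit}.} This is exactly the preceding Corollary, which holds for $\phi\in C^1$. It therefore holds in particular for $\phi\in C^2_b$, and for all bounded continuous $H$ adapted to \eqref{eq:limit_filtration}.

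\textbf{Main obstacle.} The only point needing care is that the limiting identities hold for all $t\in[0,T]$, not just off a countable set of M1-discontinuity times. To handle this, I would use that $Z^{\phi}$ is continuous by Proposition \ref{prop:In_key_martingale_result}, and that $I = -Z^{-\mathbf{1}}$ is then continuous too. The remaining terms in \eqref{eq:weak_form_1} are Lebesgue integrals in time, so $\langle\mu_t,\phi\rangle$ is continuous. An identity valid on a dense set of times then extends to every $t$, and the M1 limit is in fact continuous.
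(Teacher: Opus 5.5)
Your proposal is correct and matches the paper, which obtains this proposition by assembling tightness, the limiting identity \eqref{eq:weak_limit_point}, the absolute continuity of $A$, and the preceding Corollary for the boundary term. Your added details are consistent with it, namely passing from $C^\infty_b$ to $C^2_b$ using that $Z^\phi$ depends only on $(\phi(0),\partial_x\phi(0))$, and extending from co-countable times to all $t$ via the continuity of $Z^\phi$ from Proposition~\ref{prop:In_key_martingale_result}. Two small slips: $Z^{n,-\mathbf{1}}=I^n$ gives $I=Z^{-\mathbf{1}}$ rather than $-Z^{-\mathbf{1}}$, and in the mollification step finite mass of $\mu_t$ does not suffice for the drift term because $b$ has linear growth, so you need the exponential (or first-moment) bound on $\mu$ that the limit inherits.
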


Note that taking $\phi = -\mathbf{1}$ in \eqref{eq:weak_form_1} yields $I=Z^{-\mathbf{1}}$, in agreement with the finite particle setting. Note also that there need not be differentiability of $I$ and $A$ in the above. However, analogously to \eqref{eq:A_is_AC}, we can derive from \eqref{eq:randomized_A_C_I} that  $A$ is at least absolutely continuous with 
\begin{equation}\label{eq:A_weak_derivative}
A_t =  a_0 + \int_0^t A^\prime_s \di s,\quad A^\prime_s = 
\alpha\int_{s-\bar{d}}^s \varrho (s-r )\di  I_r.
\end{equation}
This is the sense in which $A^\prime$ in \eqref{eq:weak_form_1} should be understood.

\section{Decoupled mean-field representation}\label{sect:decoupled_mf}

Throughout this section, we fix a limit point $(\mu,I,Z)$ where $\mu=(\mu_t)_{t\in[0,T]}$ is then a randomized weak solution in the sense of Proposition \ref{prop:rand_weak_solution}. 

\subsection{Decoupled mean-field particle}

Given $\mu$, we introduce a decoupled particle $X$, which is the natural mean-field analogue of the finite particle system, except that we let the corresponding front $A$ and current contagiousness $C$ be specified exogenously in terms of $I^\mu_t:=1-\mu_t(\R^+)$. 

Let $\mathcal{F}^\mu$ denote the filtration generated by $I^\mu$. The fully reflected `decoupled' mean-field particle is defined by
\begin{equation}\label{eq:decoupled_reflect}
\left\{ \begin{array}{@{}l@{}l}
	\di X_t = b(t,X_t)\dt + \sigma(t,X_t)\di B_t + \tfrac{1}{2} \di \ell_t, & \quad t \in[0,T], \vspace{3pt}\\
	A_t = a_0 + \alpha \int_{t-\bar{d}}^t \varrho(t-s) I_s^\mu \ds , & \quad t \in[0,T],
	\end{array} \right.
	\end{equation}
    where $\ell$ denotes the local time of $X$ along $A$ such that $X_t $ lives in $[A_t,\infty)$. Here $B$ is a Brownian motion, $X_0\sim P_0$, and both are taken to be independent of each other as well as being independent of $\mathcal{F}^\mu$. Existence of a solution $X$ adapted to the filtration $\mathcal{F}^\mu_t \lor \sigma(X_0, B_s:s\leq t)$ can be easily confirmed, since the boundary $A$ in \eqref{eq:decoupled_reflect} is given exogenously and is absolutely continuous with bounded derivative, as in \eqref{eq:A_is_AC}. Next, we let $\chi$ be a standard Exponential random variable independent of $X_0$, $B$, and $\mathcal{F}^\mu$, and we then define the decoupled infection time
\begin{equation*}
    \tau:= \inf \Bigl\{ t\geq0 : \int_0^t \gamma(s,C_s) \di \ell_s   \geq \chi \Bigr\}
    \end{equation*}
    with $C_t=\int_{t-\bar{d}}^t \varrho(t-s) (I^{\mu}_{s} - I^{\mu}_{s-\bar{d}})\ds$. Note that $\tau$ only depends on $X$ through the local time $\ell$, as $C$ is given exogenously and $\chi$ is independent of $\mathcal{F}^\mu_t \lor \sigma(X_0, B_s:s\leq t)$. 
    
    Consider the $\mathcal{F}^\mu$-adapted flow of (random) sub-probability measures $(P_t)_{t\in[0,T]}$ defined by
\[
\langle P_t ,\phi \rangle=\E[\phi(X_t) \mathbf{1}_{t<\tau}\mid \mathcal{F}^\mu_T].
\]
Since $B$ and $X_0$ were taken to be independent of $\mathcal{F}^\mu_T$, we can observe that, for any $\mathcal{F}^\mu$-adapted test functions $(\phi(t,\cdot))_{t\in[0,T]}$, we have
\begin{align}
\langle P_t , \phi \rangle &= \E[\E[\phi(t,X_t)\mathbf{1}_{t<\tau}\mid \mathcal{F}^\mu_T \lor \sigma(X_0, B_s :s\leq t) ] \mid \mathcal{F}^\mu_T] \nonumber \\
& = \E[\phi(t,X_t) \Prob(t<\tau\mid \mathcal{F}^\mu_T \lor \sigma(X_0, B_s :s\leq t) ) \mid \mathcal{F}^\mu_T] \nonumber \\
& = \E[\phi(t,X_t) e^{-\int_0^t \gamma(s,C_s)\di \ell_s} \mid \mathcal{F}^\mu_T]. \label{eq:rewrite_nu}
\end{align}
Taking also $\phi(t,x)$ to be $C^{1,2}$, an application of It\^o's formula in \eqref{eq:rewrite_nu} gives
\begin{equation}\label{eq:lin_weak_formulation_P}
\di  \langle P_t , \phi(t,\cdot) \rangle = \bigl\langle P_{t},\tfrac{{1}}{2}\sigma_{A}(t,\cdot)^{2}\partial_{xx}\phi(t,\cdot) + b_{A}(t,\cdot)\partial_{x}\phi(t,\cdot)+\partial_t\phi(t,\cdot) \bigr\rangle\dt + \di Z^{P,\phi}_t
\end{equation}
where
\begin{align}
Z^{P, \phi}_t &= \E\Bigl[ \int_0^t \bigl(\tfrac{1}{2}\partial_x\phi(s,A_s)- \gamma(s,C_s)\phi(s,A_s)\bigr)e^{-\int_0^s \gamma(r,C_r) \di \ell_r} \di \ell_s  \mid \mathcal{F}^\mu_T\Bigr] \nonumber \\
& = \E\Bigl[ \int_0^t \bigl(\tfrac{1}{2} \partial_x\phi(s,A_s)- \gamma(s,C_s)\phi(s,A_s)\bigr)\Prob\bigl(s < \tau \mid \mathcal{F}^\mu_T\lor \sigma(X_0,B_r:r\leq t)\bigr) \di \ell_s  \mid \mathcal{F}^\mu_T\Bigr] \nonumber \\
& =\E\Bigl[ \int_0^t \bigl(\tfrac{1}{2}\partial_x\phi(s,A_s)- \gamma(s,C_s)\phi(s,A_s)\bigr)\mathbf{1}_{s<\tau}\di \ell_s  \mid \mathcal{F}^\mu_T\Bigr]. \label{eq:BC_P}
\end{align}
In \eqref{eq:lin_weak_formulation_P} we have applied the conditional Fubini theorem, and in \eqref{eq:BC_P} we have again applied the same reasoning as in \eqref{eq:rewrite_nu} together with the fact that $\ell$ is supported on $\{t\geq0 :X_t=A_t\}$.

To compare with $\mu$ from Proposition \ref{prop:rand_weak_solution}, we shall examine $P$ in the frame of the moving boundary $A$. As in \eqref{eq:shift_empirical}, we consider $\theta_{c}(x)=x+c$ and introduce $(\nu_t)_{t\in[0,T]}$ defined by
\begin{equation}\label{eq:defn_nu}
\nu_t = P_t \circ \theta^{-1}_{A_{t}}.
\end{equation}

\begin{prop}[Decoupled weak solution]\label{prop:linearised_problem_nu}
    Let $A$ and $C$ be defined in terms of a given random weak solution $\mu$ according to Proposition \ref{prop:rand_weak_solution}. Then, the corresponding $\mathcal{M}_{\leq 1}(\R^+)$-valued stochastic process $(\nu_t)_{t\in[0,T]}$ defined in \eqref{eq:defn_nu} satisfies
    \begin{align}
\di \langle\nu_t,\phi\rangle	&=\langle\nu_{t},\tfrac{{1}}{2}\sigma_{A}(t,\cdot)^{2}\partial_{xx}\phi\rangle\dt+\langle\nu_{t}, (b_{A}(t,\cdot)-A^\prime_{t})\partial_{x}\phi\rangle\dt + \di Z^{\nu, \phi}_t,\label{eq:weak_form_nu} \\
 \di Z_t^{\nu, \phi}
&=  \bigl( \tfrac{1}{2}\partial_x \phi(0) - \phi(0) \gamma(t, C_t) \bigr) \sigma(t, A_t)^2 v_t(0)  \dt,\label{eq:bc_nu_classical}
\end{align}
with $Z^\nu_0=0$, for all $t\in[0,T]$ and $\phi \in C^{\infty}_b(\R)$, where $v_t$ denotes the (random) density of $\nu_t$. Furthermore, $v_t$ is adapted to the filtration $\mathcal{F}^\mu_t=\sigma(I^\mu_s:s\leq t)$, jointly continuous on $(0,T]\times \R^+$, and satisfies the same Aronson estimate and $L^2$ control as in \eqref{eq:lamperti_aronson} and \eqref{eq:L2-control}, respectively. Note also that $I^\nu_t:=1-\nu_t(\R^+)$ satisfies $I^\nu=Z^{\nu,-\mathbf{1}}$.
\end{prop}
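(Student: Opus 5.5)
The plan is to transfer the identities \eqref{eq:lin_weak_formulation_P}--\eqref{eq:BC_P} for $P$ into the moving frame. Then we treat the decoupled particle exactly as a single particle of Section \ref{sect:density_perturb_and_bounds}, with $\mathcal{G}^{i,n}$ replaced by $\mathcal{F}^\mu$. The boundary $A$ is exogenous. By \eqref{eq:A_weak_derivative} it is absolutely continuous with $|A'|\le \alpha\Vert\varrho\Vert_{L^\infty}$, just as in \eqref{eq:A_is_AC}. The driving pair $(X_0,B)$ and the exponential $\chi$ are independent of $\mathcal{F}^\mu_T$.

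\textbf{Step 1: density, continuity, bounds.} We apply the Lamperti-type transform \eqref{eq:Lamperti_transform_Gamma} with $\hat A^{i,n}$ replaced by $A$. As in \cite[Theorem~2.4]{fausti_soj_22}, this yields a reflected SDE $Z$ of the form \eqref{eq:Z_i_n}. Its drift $\tilde b$ is $\mathcal{F}^\mu$-adapted and obeys the linear growth bound \eqref{eq:lin_growth_bound}, because the only inputs are the Lipschitz and linear-growth properties of $b,\sigma$ and the bound on $A'$. The killing time takes the form \eqref{eq:tau_Upsilon} with $(A,C)$ in place of $(\hat A^{i,n},\hat C^{i,n})$. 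Lemma \ref{lem:girsanov} and the proof of Theorem \ref{thm:perturb_density} then go through verbatim with conditioning on $\mathcal{F}^\mu_T$.

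This gives a conditional density $p(t,x)$ of $X_t$ on $\{t<\tau\}$, jointly continuous for $t>0$ and $x\ge A_t$, together with the Aronson estimate \eqref{eq:lamperti_aronson}. The corresponding $L^2$ bound follows as in Corollary \ref{cor:L2-sqrt}. Setting $v_t(x):=p(t,x+A_t)$ gives the density of $\nu_t$. The shift by $A_t$ preserves joint continuity on $(0,T]\times\R^+$, since $A$ is continuous. It also preserves both bounds.

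For adaptedness to $\mathcal{F}^\mu_t$, we note that $X$ and $\ell$ on $[0,t]$ depend on $\mathcal{F}^\mu$ only through $(A_s,C_s)_{s\le t}$, which are $\mathcal{F}^\mu_t$-measurable. Hence $\E[\phi(X_t)\mathbf 1_{t<\tau}\mid\mathcal{F}^\mu_T]=\E[\phi(X_t)\mathbf 1_{t<\tau}\mid\mathcal{F}^\mu_t]$. This is justified by independence of $(X_0,B,\chi)$ from $\mathcal{F}^\mu_T$, via a freezing lemma.

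\textbf{Step 2: the moving-frame equation.} We apply \eqref{eq:lin_weak_formulation_P} with the time-dependent test function $\phi(t,x):=\tilde\phi(x-A_t)$, where $\tilde\phi\in C_b^\infty$. Since $A$ is only absolutely continuous, we first mollify $A$ in time to obtain $C^{1,2}$ test functions. Then we pass to the limit by dominated convergence, using $|A'|$ bounded and $A'_n\to A'$ a.e.

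The term $\partial_t\phi=-A'_t\partial_x\tilde\phi(x-A_t)$ produces $-\langle\nu_t,A'_t\partial_x\tilde\phi\rangle$. The other terms become the shifted coefficients $\sigma_A,b_A$, giving \eqref{eq:weak_form_nu}. Taking $\phi=-\mathbf 1$ gives $I^\nu=Z^{\nu,-\mathbf 1}$ immediately, since all derivative terms vanish.

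\textbf{Step 3: identifying the boundary term (main obstacle).} Here we must show that \eqref{eq:BC_P} equals $\int_0^t(\tfrac12\partial_x\tilde\phi(0)-\tilde\phi(0)\gamma(s,C_s))\sigma(s,A_s)^2v_s(0)\ds$. The coefficient is deterministic given $\mathcal{F}^\mu_T$, so it suffices to show
\[
\E\Bigl[\int_0^t g(s)\mathbf 1_{s<\tau}\di\ell_s\mid\mathcal{F}^\mu_T\Bigr]=\int_0^t g(s)\sigma(s,A_s)^2p(s,A_s)\ds
\]
for bounded $\mathcal{F}^\mu_T$-measurable $g$.

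The route is the one used for \eqref{eq:expected_local_time}, equivalently \eqref{eq:local_time_to_empirical}--\eqref{eq:limit_local_time}. We write $\ell$ via the occupation formula \eqref{eq:local_time_defn} as the $\varepsilon\downarrow0$ limit of $\int_0^t g(s)\mathbf 1_{s<\tau}\sigma(s,X_s)^2\psi_\varepsilon(X_s-A_s)\ds$.

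Two facts are needed to pass $\varepsilon\downarrow0$ inside the conditional expectation. The first is domination by $\sup_x\ell^x_t$, which is integrable by Barlow--Yor, exactly as in Lemma \ref{lemma:DCT_epsilon}. The second is that, after taking the conditional expectation, the integrand becomes $\int\sigma^2\psi_\varepsilon(x-A_s)p(s,x)\dx$. This converges to $\sigma(s,A_s)^2p(s,A_s)$ by continuity of $p$ up to the boundary, and it is dominated by $Cs^{-1/2}$ from the Aronson bound.

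The delicate points are the following. First, one must confirm that the generalised occupation formula applies to the local time along the curve $A$ with the killing indicator; we handle this by noting that $\mathbf 1_{s<\tau}$ is left-continuous and bounded. Second, the continuity of $p$ at the boundary must be uniform enough for dominated convergence in $s$. Both are covered by Step 1, which completes \eqref{eq:bc_nu_classical}.
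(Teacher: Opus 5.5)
Your proposal is correct and follows essentially the same route as the paper. Like the paper, you use the Itô identity for $P$ with the test function $\tilde\phi(x-A_t)$, obtain a continuous density with the Aronson and $L^2$ bounds by rerunning Lemma~\ref{lem:girsanov}, Theorem~\ref{thm:perturb_density} and Corollary~\ref{cor:L2-sqrt} conditionally on $\mathcal{F}^\mu_T$, and identify the boundary term via the $\psi_\delta$-approximation of the local time, Barlow--Yor domination and continuity of $v$ at $0$. Two harmless remarks: mollifying $A$ is unnecessary, since $X-A$ is a continuous semimartingale and Itô's formula applies to $\tilde\phi(X_t-A_t)e^{-\int_0^t\gamma(s,C_s)\di\ell_s}$ directly; and $s\mapsto\mathbf{1}_{s<\tau}$ is right- rather than left-continuous, which does not matter because the generalised occupation formula holds pathwise for Borel integrands and $\di\ell$ has no atoms.
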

\begin{proof} By the definition of $\nu$ in \eqref{eq:defn_nu}, we can take $\phi(t,x)=\phi(x-A_t)$, for $\phi \in C_b^\infty(\R)$, as the $\mathcal{F}^\mu$-adapted $C^{1,2}$ test function in \eqref{eq:lin_weak_formulation_P}--\eqref{eq:BC_P} and change variables to obtain \eqref{eq:weak_form_nu} with
\begin{equation}\label{eq:BC_nu}
Z^{\nu, \phi}_t :=\E\Bigl[ \int_0^t \bigl(\tfrac{1}{2}\partial_x\phi(0)- \gamma(s,C_s)\phi(0)\bigr)\mathbf{1}_{s<\tau}\di \ell_s  \mid \mathcal{F}^\mu_T\Bigr].
\end{equation}
It remains to argue that \eqref{eq:BC_nu} in fact takes the form \eqref{eq:bc_nu_classical} with (each realisation of) $\nu$ having a continuous density function $v$. To this end, the key observation is the following: by the construction of $X$, we can argue in complete analogy with Theorem \ref{thm:perturb_density} and Corollary \ref{cor:L2-sqrt} to conclude that, conditional on $\mathcal{F}^\mu_T$, $\nu_t$ has a density $v_t$ such that $(t,x)\mapsto v(t,x)$ is continuous up to the boundary and satisfies \eqref{eq:lamperti_aronson} and \eqref{eq:L2-control}. As in \eqref{eq:limit_local_time} and the proof of Lemma \ref{lemma:DCT_epsilon}, we in particular get
\begin{align}
&\E\Bigl[ \int_0^t \bigl( \tfrac{1}{2} \partial_x\phi(0)- \gamma(s,C_s)\phi(0)\bigr)\mathbf{1}_{s<\tau}\di \ell_s  \mid \mathcal{F}^\mu_T\Bigr] \nonumber \\
& \;\; = \lim_{\delta \downarrow 0} \E\Bigl[ \int_0^t \bigl( \tfrac{1}{2} \partial_x\phi(0)- \gamma(s,C_s)\phi(0)\bigr)\mathbf{1}_{s<\tau}\psi_\delta(X_s - A_s)\sigma_A(s,X_s)^2 \ds  \mid \mathcal{F}^\mu_T\Bigr] \nonumber \\
&\;\; = \int_0^t \bigl(\tfrac{1}{2} \partial_x\phi(0)- \gamma(s,C_s)\phi(0)\bigr)\lim_{\delta \downarrow 0} \langle \nu_s , \sigma_A(s,\cdot)^2\psi_\delta \rangle \ds \nonumber
\end{align}
which is equal to \eqref{eq:bc_nu_classical} by the continuity of $v$. The claim $I=Z^{\nu,-\mathbf{1}}$ follows by taking $\phi \equiv -\mathbf{1}$ as the test function in \eqref{eq:weak_form_nu}, since $\langle \nu_0 , \mathbf{1} \rangle=1$.
\end{proof}

 For a given limit point $\mu$, which satisfies Proposition \ref{prop:rand_weak_solution}, we now have that the associated $\nu$ in Proposition \ref{prop:linearised_problem_nu} satisfies that same weak formulation but with $A$ and $C$ frozen in terms of $\mu$. Thus, we refer to it as a decoupled, or linearised, weak solution. Crucially, $\nu$ has a well-behaved density and, in turn, enjoys the stronger characterisation \eqref{eq:bc_nu_classical} of its boundary condition.

\subsection{Uniqueness of the linearised problem}

 In this section, we show that any limit point $\mu$ from Proposition \ref{prop:rand_weak_solution} must agree with the corresponding $\nu$ from Proposition \ref{prop:linearised_problem_nu}. In particular, $\mu$ inherits the good properties of $\nu$. For clarity of notation, we shall write $I^\mu$ and $Z^{\mu,\phi}$ for the quantities corresponding to the given $\mu$ (from Proposition \ref{prop:rand_weak_solution}). We will carry out energy estimates for mollifications of the two measure flows $\mu$ and $\nu$. It will be convenient to use the following Dirichlet heat kernel as the mollifier.

\begin{defn}[Dirichlet heat kernel]\label{defn:heat_kernel}
    We define the Dirichlet heat kernel $G_\varepsilon$ by
    \begin{equation*}
G_{\varepsilon}(x,y) : = p_{\varepsilon}(x-y) - p_{\varepsilon}(x+y),
\quad \text{where} \quad p_{\varepsilon}(x) : = \frac{1}{\sqrt{2 \pi \varepsilon}} e^{-\frac{x^2}{2 \varepsilon}}.
\end{equation*}
This is the Green's function for $\partial_\varepsilon u(\varepsilon,x) = \frac{1}{2}\partial_{xx}u(\varepsilon,x)$ on $\R^+$ with $u(\varepsilon,0)=0$.
\end{defn}

\begin{notation}
     For any given signed measure-valued stochastic process $(\pi_t)_t$, we denote
    \begin{equation}\label{eq:sub_meas_mu_eps}
        f^{\pi,\varepsilon}_t (x) : = \langle \pi_t , G_{\varepsilon}(x,\cdot) \rangle = \int_0^{\infty} G_{\varepsilon}(x,y) \di \pi_t(y), \qquad
        F^{\pi, \varepsilon}_t(x) : = \int_x^{\infty} f^{\pi, \varepsilon}_{t}(z) \dz,
    \end{equation}
    and $Z_t^{\pi, G_{\varepsilon}}(x) := Z_t^{\pi, G_{\varepsilon}(x, \cdot)}$.
\end{notation}

\begin{thm}[Uniqueness of the linearised problem] \label{thm:uniqueness_linear} Let $\mu$ be given by Proposition \ref{prop:rand_weak_solution}, and let $\nu$ be the corresponding decoupled weak solution provided by Proposition \ref{prop:linearised_problem_nu} with the same initial conditions $\nu_0 = \mu_0$. Then $(\mu_t)_{t\in[0,T]}$ and $(\nu_t)_{t\in[0,T]}$ coincide with probability 1.
\end{thm}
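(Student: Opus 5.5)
We compare $\mu$ and $\nu$ through the difference $\pi_t:=\mu_t-\nu_t$, a signed measure-valued process with $\pi_0=0$. Both $\mu$ and $\nu$ satisfy the same linear evolution \eqref{eq:weak_form_1}/\eqref{eq:weak_form_nu}, with the \emph{same} frozen coefficients $A$, $A'$ and $C$ determined by $I^\mu$. Hence $\pi$ satisfies
\[
\di\langle\pi_t,\phi\rangle=\langle\pi_t,\tfrac12\sigma_A(t,\cdot)^2\partial_{xx}\phi+(b_A(t,\cdot)-A'_t)\partial_x\phi\rangle\dt+\di Z^{\mu,\phi}_t-\di Z^{\nu,\phi}_t .
\]
The boundary term of $\mu$ is only known in the averaged form \eqref{eq:bc_as_limit}, so a pathwise estimate is not available. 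Instead, we work in expectation and use test processes $H$ adapted to $\mathcal F^\mu$; note that $A$, $C$ and $v$ are all $\mathcal F^\mu$-adapted.

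We mollify with the Dirichlet kernel of Definition \ref{defn:heat_kernel} and use the distribution-function variable $F^{\pi,\varepsilon}_t(x)=\int_x^\infty f^{\pi,\varepsilon}_t(z)\dz$. The test function $\phi_x(y):=\int_x^\infty G_\varepsilon(z,y)\dz$ is smooth and bounded. Its crucial property is $\phi_x(0)=0$, because $G_\varepsilon(z,0)=0$. The killing part $\phi(0)\gamma(s,C_s)$ therefore drops out of both boundary terms. What remains is the Neumann part $\tfrac12\partial_y\phi_x(0)=\int_x^\infty\partial_yG_\varepsilon(z,0)\dz=2p_\varepsilon(x)$, multiplied by the boundary densities. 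For $\nu$ this multiplier is $\sigma(s,A_s)^2v_s(0)$ by \eqref{eq:bc_nu_classical}. For $\mu$, \eqref{eq:bc_as_limit} gives it only in expectation.

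Apply It\^o's formula to $\|F^{\pi,\varepsilon}_t\|_{L^2(\R^+)}^2$, or more concretely to $\int_0^\infty (F^{\pi,\varepsilon}_t(x))^2\dx$. This is a product of a weak-form quantity with itself, and the only jumps come from $Z$, which has continuous limit points by Proposition \ref{prop:In_key_martingale_result}. We then take expectations, with $H_s=F^{\pi,\varepsilon}_s(x)$ playing the role of the $\mathcal F^\mu$-adapted integrand in \eqref{eq:bc_as_limit}. This produces four types of term.

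\textbf{(i) Diffusion term.} After integrating by parts twice, it gives $-\tfrac12\int\sigma_A^2(\partial_xF^{\pi,\varepsilon})^2$, using $\sigma$ non-degenerate. It also gives commutator errors from moving $\sigma_A^2$ and $b_A-A'$ through the mollifier $G_\varepsilon$. Since $\sigma$ is Lipschitz and $b$ has linear growth, these are bounded by $\kappa\|F^{\pi,\varepsilon}\|_2^2+\tfrac14\|\partial_xF^{\pi,\varepsilon}\|_2^2$ plus $o(1)$ as $\varepsilon\downarrow0$. The $o(1)$ control uses the Gaussian tails available for $\nu$ via \eqref{eq:lamperti_aronson} and for $\mu$ from the moment bounds.

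\textbf{(ii) Drift term.} It is handled by Cauchy--Schwarz, bounded by $|A'|\le\alpha\|\varrho\|_\infty$.

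\textbf{(iii) Boundary terms.} The two boundary contributions combine to
\[
2\,\E\Big[\int_0^t\!\!\int_0^\infty F^{\pi,\varepsilon}_s(x)p_\varepsilon(x)\dx\,\big(\lim_\delta\langle\mu_s,\sigma_A^2\psi_\delta\rangle-\sigma(s,A_s)^2v_s(0)\big)\ds\Big].
\]
As $\varepsilon\downarrow0$, $\int F^{\pi,\varepsilon}_s p_\varepsilon\approx F^{\pi}_s(0)/2=-(I^\mu_s-I^\nu_s)/2$, because $\mu_s(\R^+)-\nu_s(\R^+)=I^\nu_s-I^\mu_s$. The boundary difference is the rate $\gamma(s,C_s)^{-1}\di(I^\mu-I^\nu)_s$, which follows by taking $\phi=-\mathbf 1$, since $I=Z^{-\mathbf 1}$. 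The term thus becomes $-\E[\int\gamma^{-1}(I^\mu_s-I^\nu_s)\di(I^\mu_s-I^\nu_s)]$. With $\gamma\in C^1$ bounded below, this equals $-\tfrac12\E[\gamma(t,C_t)^{-1}(I^\mu_t-I^\nu_t)^2]$ plus a term bounded by $\kappa\int_0^t\E(I^\mu-I^\nu)^2\ds$. Here we use that $C$ has bounded variation, which is where Assumption \ref{assump:coefficient_assumptions} on $\varrho$ enters. The boundary term therefore has a good sign up to a Gr\"onwall-type remainder.

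\textbf{(iv) Closing.} Adding $\E(I^\mu_t-I^\nu_t)^2$ to the energy and applying Gr\"onwall gives $\E\|F^{\pi}_t\|_2^2+\E(I^\mu_t-I^\nu_t)^2=0$ after letting $\varepsilon\downarrow0$. Hence $\mu_t=\nu_t$ for every $t$ almost surely, and by c\`adl\`ag paths for all $t$ simultaneously.

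The main obstacle is step (iii). We must rigorously interchange $\lim_{\varepsilon}$ with the limit $\lim_\delta$ hidden in \eqref{eq:bc_as_limit}. We must also justify that $H=F^{\pi,\varepsilon}(x)$, which is $\mathcal F^\mu$-adapted but random in $x$, is an admissible integrand; the plan is to approximate it by continuous functionals and use Fubini in $x$. Finally, we must identify the $\mu$-boundary rate with $\di I^\mu$ without knowing that $\mu$ has a density. I would first prove the identity $\E\int H\,\di Z^{\mu,\phi}=\E\int H\,(\tfrac12\phi'(0)/\gamma-\phi(0))\,\di I^\mu$ from \eqref{eq:bc_as_limit} by taking the ratio with $\phi=-\mathbf 1$. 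This expresses everything through $I^\mu$ and avoids boundary densities of $\mu$ altogether.
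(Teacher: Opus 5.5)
Your architecture matches the paper's: mollification by the Dirichlet kernel, an energy estimate for $\E\|F^{\pi,\varepsilon}_t\|_2^2$ at the level of the distribution function, and the vanishing of the $\phi(0)\gamma$ part because $G_\varepsilon(\cdot,0)=0$. You then convert the $\mu$-boundary term into $\di I^\mu$ via \eqref{eq:bc_as_limit} with $\phi=-\mathbf 1$ at fixed $\varepsilon$ and $x$, which, as you anticipate, makes any interchange of $\lim_\varepsilon$ and $\lim_\delta$ unnecessary, and you close with Gr\"onwall. Your handling of $\gamma$ differs from the paper's but is a legitimate variant. The paper weights the energy by $\gamma(t,C_t)$, so the boundary term is exactly $\int p_\varepsilon F^{\Delta,\varepsilon}\dx\,\di(I^\mu-I^\nu)$ and $\di\gamma(s,C_s)$ enters the Gr\"onwall constant. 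You instead test with $H/\gamma$ and absorb $\di(\gamma^{-1})$ into the good-sign term $-\tfrac12\E[\gamma(t,C_t)^{-1}(I^\mu_t-I^\nu_t)^2]$. This works because $C$ is Lipschitz.

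The genuine gap is in (i)--(ii).
\begin{itemize}
\item \textbf{The missing boundary-layer term.} For your test function, $\partial_y\phi_x(y)=G_\varepsilon(x,y)+2p_\varepsilon(x+y)$. So, besides the genuine commutators, the drift produces a term proportional to $\int_0^\infty F^{\pi,\varepsilon}_s(x)\,r^\varepsilon_s(x)\dx$, where $r^\varepsilon_s(x)=\langle\pi_s,(b_A(s,\cdot)-A'_s)\,p_\varepsilon(x+\cdot)\rangle$. This is not a commutator, and neither Lipschitz bounds nor tails or moments control it. It is governed by the mass of $|\pi_s|$ in an $O(\sqrt\varepsilon)$-layer at the boundary: indeed $\|r^\varepsilon_s\|_1\le c\int_0^\infty(1+y)e^{-y^2/2\varepsilon}\,\di|\pi_s|(y)$, whose limit is $c\,|\pi_s|(\{0\})$.
\item \textbf{Why it cannot be absorbed.} Mass at the boundary is invisible to $F^{\pi,\varepsilon}$ and $f^{\pi,\varepsilon}$, since $G_\varepsilon(\cdot,0)\equiv0$. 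So this term cannot be absorbed into $\|F^{\pi,\varepsilon}\|_2$ or $\|f^{\pi,\varepsilon}\|_2$. For $\nu$ the density handles it, but for the random limit point $\mu$, which is not yet known to have a density, you need information about $\mu$ near $x=0$ that your proposal never supplies.
\item \textbf{How the paper supplies it.} Lemma \ref{lemma:error_terms_decay_linear_2} uses $\E[\mu_t(a,b)^2]\le C|b-a|^2t^{-1}$, which comes from the pairwise particle bound of Proposition \ref{prop:pairwise_bound} passed to the limit. This gives $\E\int_0^t\|r^\varepsilon_s\|_2^2\ds\to0$ after Young's inequality.
\item \textbf{A cheaper alternative.} Since $|F^{\pi,\varepsilon}|\le2$, the $L^1$ bound above reduces everything to $\mu_s(\{0\})=0$ for a.e.\ $s$. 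That follows from \eqref{eq:bc_as_limit} with $\phi=-\mathbf 1$ and Fatou, because $\psi_\delta(0)\to\infty$.
\item \textbf{Same fact needed in (iii).} You also need this no-atom property in step (iii). There, $\int_0^\infty p_\varepsilon F^{\pi,\varepsilon}_s\dx\to\tfrac12\pi_s((0,\infty))$, and this equals $-\tfrac12(I^\mu_s-I^\nu_s)$ only if $\pi_s(\{0\})=0$.
\item \textbf{Linear-growth drift.} Because $b$ has linear growth, $\int F^{\pi,\varepsilon}(b_A-A')f^{\pi,\varepsilon}\dx$ cannot be handled by plain Cauchy--Schwarz. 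Integrate by parts as the paper does, using $|\partial_xb_A|\le K_b$, $A'\ge0$, and $F^{\pi,\varepsilon}(0)^2=2\int_0^\infty F^{\pi,\varepsilon}f^{\pi,\varepsilon}\dx$.
\end{itemize}
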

\begin{proof}
Fix $\mu $ from Proposition~\ref{prop:rand_weak_solution} and take $y \mapsto G_{\varepsilon}(x,y)$ as a test function in the weak formulation \eqref{eq:weak_form_1}. With the notation introduced above, $\{f^{\mu,\varepsilon}_t \}_{\varepsilon}$ is a family of smooth sub-probability densities on $[0, \infty)$ obtained by a particular mollification of $\mu$. The family $\{F^{\mu,\varepsilon}_t (x) \}_{\varepsilon}$ gives the corresponding probability mass on $(x,\infty)$ and satisfies
\begin{equation*}
\partial_x F^{\mu, \varepsilon}_t(x) = - f^{\mu, \varepsilon}_t(x).
\end{equation*}
For all bounded continuous processes $H$ adapted to the filtration \eqref{eq:limit_filtration} generated by $(\mu, I^{\mu})$, the boundary condition \eqref{eq:bc_as_limit} reads
\begin{equation}\label{eq:Z_with_heat_kernel_linear}
\expect{\int_0^t H_s \di Z_t^{\mu, G_{\varepsilon}} (x)}
= - \lim_{\delta \downarrow 0}  \expect{ \int_0^t H_s \partial_x p_{\varepsilon}(x) \langle \mu_s, \sigma_A(s, \cdot)^2 \psi_\delta  \rangle \ds},
\end{equation}
where we have used that $\partial_y G_{\varepsilon}(x,0) = \frac{2x}{\varepsilon} p_{\varepsilon}(x) = - 2 \partial_x p_{\varepsilon}(x)$ and $G_{\varepsilon}(x,0) = 0$.
The weak formulation \eqref{eq:weak_form_1} gives
\begin{align*}
f^{\mu,\varepsilon}_t (x) &- f^{\mu,\varepsilon}_0 (x)
= \int_0^t \big\langle \mu_s , \tfrac{1}{2} \sigma_{A}(s, \cdot)^2 \partial_{yy} G_{\varepsilon}(x,\cdot)
+ [b_A(s, \cdot) - A'_s] \partial_y G_{\varepsilon}(x,\cdot) \big\rangle \ds
+ Z_t^{\mu, G_{\varepsilon}}(x) \\
&= \int_0^t \big\langle \mu_s , \tfrac{1}{2} \sigma_{A}(s, \cdot)^2 \partial_{xx} G_{\varepsilon}(x,\cdot)
- [b_A(s, \cdot) - A'_s] \big( \partial_x G_{\varepsilon}(x,\cdot) + 2 \partial_x p_{\varepsilon}(x + \cdot)   \big) \big\rangle \ds  + Z_t^{\mu, G_{\varepsilon}}(x),
\end{align*}
where we have switched derivatives from $y$ to $x$ using relations $\partial_{yy} G_{\varepsilon} (x,y)=\partial_{xx} G_{\varepsilon} (x,y)$ and $\partial_{y} G_{\varepsilon}  (x,y) = - \partial_{x} G_{\varepsilon}  (x,y) - 2 \partial_x p_{\varepsilon}(x + y) $. Integrating over the interval $[x, \infty) \subset [0, \infty)$, we get
\begin{align*}
F^{\mu, \varepsilon}_t(x) - F^{\mu, \varepsilon}_0(x)
&= - \tfrac{1}{2} \int_0^t \partial_{x} \langle \mu_s , \sigma_{A}(s, \cdot)^2 G_{\varepsilon}(x,\cdot) \rangle \ds
+ \int_0^t  \langle \mu_s , [b_A(s, \cdot) - A'_s] G_{\varepsilon}(x,\cdot) \rangle \ds  \\
&\quad + 2 \int_0^t \langle \mu_s , [b_A(s, \cdot) - A'_s] p_{\varepsilon}(x + \cdot) \rangle \ds
+ \int_x^{\infty} Z_t^{\mu, G_{\varepsilon}}(z) \dz,
\end{align*}
where we have used that $\partial_x G_\varepsilon(x, y), G_\varepsilon(x, y)$ and $p_\varepsilon(x+y)$ tend to $0$ as $x \to \infty$.
Now let $\nu$ be the corresponding linearised weak solution provided by Proposition \ref{prop:linearised_problem_nu} with $\nu_0 = \mu_0$. Taking $y \mapsto G_{\varepsilon}(x,y)$ as a test function in \eqref{eq:weak_form_nu} and combining with the equation for $F^{\mu, \varepsilon}_t$, we get
\begin{align*}
\di \big(F^{\mu, \varepsilon}_t(x) - F^{\nu, \varepsilon}_t(x) \big)
&= - \tfrac{1}{2} \partial_{x} \langle \mu_t  - \nu_t , \sigma_{A}(t, \cdot)^2 G_{\varepsilon}(x,\cdot) \rangle \dt  \\
&\quad + \big\langle \mu_t - \nu_t , [b_A(t, \cdot) - A'_t] \big( G_{\varepsilon}(x,\cdot) + 2 p_{\varepsilon}(x + \cdot) \big) \big\rangle \dt \\
&\quad+  \int_x^{\infty} \di \big( Z_t^{\mu, G_{\varepsilon}}(z) - Z_t^{\nu, G_{\varepsilon}}(z) \big) \dz,
\end{align*}
where we have applied Fubini's on the last term on the right-hand side.

Denoting $\Delta_t:=\mu_t - \nu_t$, we can then derive an evolution equation for $ \gamma(t, C_t)F^{\Delta, \varepsilon}_t(x)^2$ which we integrate in time and space to obtain
\begin{align}
\gamma(t, C_t) \| F^{\Delta, \varepsilon}_t \|_2^2
&= - \int_0^t \gamma(s, C_s) \int_0^{\infty} F^{\Delta, \varepsilon}_s(x) \, \partial_{x} \langle \Delta_s, \sigma_{A}(s, \cdot)^2 G_{\varepsilon}(x,\cdot) \rangle \dx \ds \nonumber \\
&\quad + 2 \int_0^t \gamma(s, C_s) \int_0^{\infty} F^{\Delta, \varepsilon}_s(x) \, \big\langle \Delta_s, [b_A(s, \cdot) - A'_s] \big( G_{\varepsilon}(x,\cdot) + 2 p_{\varepsilon}(x + \cdot) \big) \big\rangle \dx \ds \nonumber \\
&\quad + 2  \int_0^{\infty} \int_x^{\infty} \int_0^t \gamma(s, C_s) F^{\Delta, \varepsilon}_s(x) \di \big( Z_s^{\mu, G_{\varepsilon}}(z) - Z_s^{\nu, G_{\varepsilon}}(z) \big) \dz \dx \nonumber \\
&\quad + \int_0^t \| F^{\Delta, \varepsilon}_s \|_2^2  \big( \partial_s \gamma(s, C_s)\ds + \partial_{c} \gamma(s,C_s) \di C_s \big)
=:- \mathcal{I}^{\sigma}_{\varepsilon} + 2 \mathcal{I}^{b}_{\varepsilon} + 2 \mathcal{I}^{Z}_{\varepsilon} + \mathcal{I}^{\di \gamma}_{\varepsilon}. \label{eq:l2_delta}
\end{align}
We deal with the terms one by one. First of all, integrating by parts we have
\begin{equation*}
\mathcal{I}^{\sigma}_{\varepsilon}
= - \int_0^t \gamma(s, C_s)  \int_0^{\infty} \partial_x F^{\Delta, \varepsilon}_s(x)
 \langle \Delta_s, \sigma_{A}(s, \cdot)^2 G_{\varepsilon}(x,\cdot) \rangle \dx \ds,
\end{equation*}
where the boundary term vanished since $F^{\Delta, \varepsilon}_s(x) \rightarrow 0$ as $x$ approaches $+\infty$, and $G_{\varepsilon}(0,y) = 0$. Then, adding and subtracting terms,
\begin{equation}\label{eq:integral_sigma_lin}
\mathcal{I}^{\sigma}_{\varepsilon}
 = \int_0^t \gamma(s, C_s) \int_0^{\infty} \Big( \sigma_{A}(s, x)^2 f^{\Delta, \varepsilon}_s(x)^2 + f^{\Delta, \varepsilon}_s(x) c^{\sigma, \varepsilon}_s(x) \Big) \dx \ds,
\end{equation}
where we have introduced the commutator
\begin{equation}\label{eq:commutator}
c^{\phi, \varepsilon}_t(x)
:= f_t^{\phi\cdot \Delta,\varepsilon}(x) - \phi(t,x)f^{ \Delta,\varepsilon}_t(x)=
\langle \Delta_t, \phi(t, \cdot) G_{\varepsilon}(x,\cdot) \rangle  - \phi(t, x) f^{\Delta, \varepsilon}_t(x)
\end{equation}
and used the shorthand $c^{\sigma,\varepsilon}$ for $c^{\sigma_A^2,\varepsilon}$. Similarly, we use the shorthand $c^{b,\varepsilon}$ for $c^{b_A,\varepsilon}$ below. Moreover, we shall need the following remainder
\begin{equation}\label{eq:remainder}
r^{\varepsilon}_t(x)
:=  \langle \Delta_t, [b_A(t, \cdot) - A'_t] \,  p_{\varepsilon}(x + \cdot) \rangle.
\end{equation}
Now consider $\mathcal{I}^{b}_{\varepsilon}$. We split it into a $G_{\varepsilon}$ term and a $p_{\varepsilon}$ term, integrate by parts and add and subtract terms as above, to yield
\begin{align*}
\mathcal{I}^{b}_{\varepsilon} 
&= \int_0^t \gamma(s, C_s) \int_0^{\infty} - \tfrac{1}{2} [b_A(s, x) - A'_s] \, \partial_x \big( F^{\Delta, \varepsilon}_s(x)^2 \big)
+ F^{\Delta, \varepsilon}_s(x) \big( c^{b, \varepsilon}_s(x)  + 2 r^{\varepsilon}_s(x) \big) \dx \ds
 \\
 &\le  \int_{0}^t \gamma(s, C_s) \Big( \tfrac{1}{2}  K_{b} \| F^{\Delta, \varepsilon}_s   \|_2^2 + \tfrac{1}{2} [b_A(s, 0) - A'_s] \, F^{\Delta,\varepsilon}_s(0)^2 + \hspace{-3pt} \int_0^{\infty}\hspace{-3pt} F^{\Delta, \varepsilon}_s(x) \big( c^{b, \varepsilon}_s(x)  + 2 r^{\varepsilon}_s(x) \big) \dx \Big) \ds
  \\
 &\le  \int_{0}^t \gamma(s, C_s) \Big( \tfrac{1}{2}  K_{b} \| F^{\Delta, \varepsilon}_s   \|_2^2 -
 \tfrac{1}{2} K_b \hspace{-3pt}\int_0^{\infty} \hspace{-3pt} \partial_x \big( F^{\Delta, \varepsilon}_s(x)^2   \big) \dx + \hspace{-3pt}\int_0^{\infty} \hspace{-3pt} F^{\Delta, \varepsilon}_s(x) \big( c^{b, \varepsilon}_s(x)  + 2 r^{\varepsilon}_s(x) \big) \dx \Big) \ds, 
\end{align*}
where we have used the Lipschitz and linear growth conditions on $b$ from Assumption~\ref{assump:coefficient_assumptions} to bound the (weak) derivative $|\partial_x b_A(s,x)|$ and $|b_A(s,0)|$ by a constant $K_b$; the $o(1/x)$ decay of $F_s^{\Delta, \varepsilon}(x)$ as $x \to \infty$ by Lemma~\ref{lemma:F_decay}; we have dropped the term $-A'_s \gamma(s,C_s)F_s^{\Delta,\varepsilon}(0)^2$ since it is negative, as $\gamma$ and $\varrho$ are non-negative by Assumption~\ref{assump:coefficient_assumptions}, and thus $A'_s$ is also non-negative by \eqref{eq:A_weak_derivative}.
Applying the generalized Young's inequality with free parameter $\theta > 0$ yields
\begin{align}
    2 \mathcal{I}^b_{\varepsilon}
     &\le   K_{b}  \int_{0}^t \gamma(s, C_s) \| F^{\Delta, \varepsilon}_s   \|_2^2 \ds
    + K_b \int_0^t \gamma(s, C_s) \bigg( \frac{1}{\theta}\,\|F^{\Delta, \varepsilon}_s \|_2^2
    +  \theta \, \| f^{\Delta, \varepsilon}_s    \|_2^2  \bigg) \ds \nonumber \\
    &\quad  + 2 \int_0^t \gamma(s, C_s) \int_0^{\infty} F^{\Delta, \varepsilon}_s(x)\Big( c^{b, \varepsilon}_s(x)  + 2 r^{\varepsilon}_s(x) \Big) \dx \ds. \label{eq:integral_b_lin}
\end{align}
We now deal with $\mathcal{I}^{\di \gamma}_{\varepsilon}$. By Assumption~\ref{assump:coefficient_assumptions}, $\gamma(t, x)$ is continuous and strictly positive, therefore bounded away from 0, and has continuous derivatives, so bounded on $[0,T]$. Moreover, by the continuous differentiability of $C_t$ and the assumptions on $\gamma$, we have
\begin{equation}\label{eq:integral_digamma}
\mathcal{I}^{\di \gamma}_{\varepsilon}
= \int_0^t \gamma(s, C_s) \| F^{\Delta, \varepsilon}_s \|_2^2 \frac{\big( \partial_s \gamma(s, C_s) + \partial_{c} \gamma(s,C_s) \tfrac{\di C_s}{\ds} \big)}{\gamma(s, C_s)} \ds 
\le \tfrac{K_{\gamma,C}}{\kappa_{\gamma}} \int_0^t \gamma(s, C_s) \| F^{\Delta, \varepsilon}_s \|_2^2\ds.
\end{equation}
Using \eqref{eq:integral_sigma_lin}, \eqref{eq:integral_b_lin} and \eqref{eq:integral_digamma} into \eqref{eq:l2_delta}, and grouping similar terms, we get
\begin{align}
\gamma(t, C_t)\| F^{\Delta, \varepsilon}_t \|_2^2
& \le  - \int_0^t \hspace{-3pt} \gamma(s, C_s) \int_0^{\infty} \hspace{-5pt} \sigma_{A}(s, x)^2 \, f^{\Delta, \varepsilon}_s(x)^2 \dx \ds  - \int_0^t \hspace{-3pt} \gamma(s, C_s) \int_0^{\infty} \hspace{-3pt} f^{\Delta, \varepsilon}_s(x)  c^{\sigma, \varepsilon}_s(x) \dx \ds \nonumber \\
&\quad + \Big(K_{b}\big(1 + \tfrac{1}{\theta}\big) + \tfrac{K_{\gamma,C}}{\kappa_{\gamma}} \Big)  \int_{0}^t \gamma(s, C_s) \| F^{\Delta, \varepsilon}_s   \|_2^2 \ds
+  K_b \theta \int_0^t \gamma(s, C_s) \| f^{\Delta, \varepsilon}_s    \|_2^2  \ds \nonumber \\
&\quad + 2 \int_0^t \gamma(s, C_s) \int_0^{\infty} F^{\Delta, \varepsilon}_s(x) \Big( c^{b, \varepsilon}_s(x)  + 2 r^{\varepsilon}_s(x) \Big) \dx \ds + 2 \mathcal{I}^{Z}_{\varepsilon}. \label{eq:unique_bound_L1-remainder}
\end{align}
Applying Young's inequality to the second and fifth terms on the right-hand side, using that $\sigma^2 \ge {\kappa}_{\sigma}$ by Assumption~\ref{assump:coefficient_assumptions}, and taking expectation on both sides of the inequality, we obtain
\begin{align*}
\expect{\gamma(t, C_t) \| F^{\Delta, \varepsilon}_t \|_2^2}
&\le \big( 3 + K_{b}\big(1 + \tfrac{1}{\theta} \big) + \tfrac{K_{\gamma,C}}{\kappa_{\gamma}} \big)  \int_{0}^t \expect{\gamma(s, C_s) \| F^{\Delta, \varepsilon}_s   \|_2^2} \ds  \\
&\quad + \big( ( K_b +\tfrac{1}{2} ) \theta - \kappa_{\sigma} \big) \expect{ \int_0^t \gamma(s, C_s) \| f^{\Delta, \varepsilon}_s    \|_2^2  \ds }  + \tfrac{1}{2\theta} \expect{\int_0^t \gamma(s, C_s) \| c^{\sigma, \varepsilon}_s \|_2^2  \ds} \\ 
&\quad+ \expect{ \int_0^t \gamma(s, C_s)  \| c^{b, \varepsilon}_s \|_2^2 \ds} 
+ 2 \expect{\int_0^t \gamma(s, C_s)  \| r^{\varepsilon}_s \|_2^2  \ds} + 2 \expect{\mathcal{I}^{Z}_{\varepsilon}}.
\end{align*}
Note that as long as we choose $\theta \le \frac{\kappa_{\sigma}}{K_b + 1/2}$, the second term on the right hand side is negative and we can drop it. Since $\gamma(t, C_t)$ is bounded on $[0,T]$, Lemma~\ref{lemma:error_terms_decay_linear} ensures that $\expect{ \int_0^t \gamma(s, C_s) \| c^{\sigma, \varepsilon}_s \|_2^2 \ds}$ and $\expect{\int_0^t \gamma(s, C_s) \| c^{b, \varepsilon}_s \|_2^2 \ds}$ go to 0 as $\varepsilon \downarrow 0$. Moreover, $\lim_{\varepsilon \downarrow 0} \, \expect{\int_0^t \| r^{\varepsilon}_s \|_2^2  \ds} = 0$
by Lemma~\ref{lemma:error_terms_decay_linear_2}. Thus the above simplifies further to 
\begin{equation}\label{eq:expectation_ineq_lin}
\expect{\gamma(t, C_t) \| F^{\Delta, \varepsilon}_t \|_2^2}
\le \Big( 3 + K_{b}\big(1 + \tfrac{1}{\theta} \big) + \tfrac{K_{\gamma,C}}{\kappa_{\gamma}} \Big)  \int_{0}^t \expect{\gamma(s, C_s) \| F^{\Delta, \varepsilon}_s   \|_2^2} \ds
+ o(1) + 2 \expect{\mathcal{I}^{Z}_{\varepsilon}}.
\end{equation}
Consider the last term on the right-hand side of the above inequality. By Fubini we get
\begin{align*}
\expect{\mathcal I_{\varepsilon}^Z}
&= - \int_0^{\infty} \int_x^{\infty} \lim_{\delta \downarrow 0 }
\expect{\int_0^t \gamma(s, C_s) F^{\Delta, \varepsilon}_s(x) \partial_z p_{\varepsilon}(z) \langle \Delta^{}_s, \sigma_A(s, \cdot)^2 \psi_\delta  \rangle \ds} \dz \dx \\
&=  \int_0^{\infty} p_{\varepsilon}(x) \lim_{\delta \downarrow 0 }
\expect{\int_0^t \gamma(s, C_s) F^{\Delta, \varepsilon}_s(x) \langle \Delta^{}_s, \sigma_A(s, \cdot)^2 \psi_\delta  \rangle \ds} \dx \\
&=  \int_0^{\infty} p_{\varepsilon}(x)
\expect{\int_0^t F^{\Delta, \varepsilon}_s(x) \di \big( I^{\mu}_s - I_s^{\nu} \big)} \dx,
\end{align*}
where the first equality above follows from equation \eqref{eq:Z_with_heat_kernel_linear} (and its equivalent for the linearised solution $\nu$), with $H_S = \gamma(s, C_s) F^{\Delta, \varepsilon}_s(x)$, which is adapted to the filtration generated by $(\mu, I^{\mu})$. Similarly the third equality follows from taking $H_s = F^{\Delta, \varepsilon}_s(x)$ and $\phi = - \mathbf{1}$ in \eqref{eq:bc_as_limit}, recalling that $ Z^{\mu, -\mathbf{1}}_s =  I^{\mu}_s$ and $ Z^{\nu, -\mathbf{1}}_s = I^{\nu}_s$ as per Proposition~\ref{prop:rand_weak_solution} (noting that this also holds for $\nu$).

Applying Fubini one more time, Lemma \ref{lemma:square_of_I_diff} gives us that $\expect{\mathcal I_{\varepsilon}^Z}$ is negative, so we can discard it from the right-hand side of \eqref{eq:expectation_ineq_lin}. Using the integrating factor $\exp \big\{ -(3 + K_b(1 + \tfrac{1}{\theta}) + \tfrac{K_{\gamma,C}}{\kappa_{\gamma}}) t\big\}$, we arrive at the estimate
\begin{equation*}
    \expect{\gamma(t, C_t) \| F^{\Delta, \varepsilon}_t \|_2^2} \le o(1)\exp \big\{ \big( 3 + K_{b}\big(1 + \tfrac{1}{\theta} \big)  + \tfrac{K_{\gamma,C}}{\kappa_{\gamma}} \big)t \big\},
\end{equation*}
for all $t \in [0,T]$. We can observe that $\| F^{\Delta, \varepsilon}_t \|_2 \leq 2\int_0^\infty y\hspace{1pt} \di \hspace{1pt}|\Delta_t|(y)$, so sending $\varepsilon \downarrow 0$ and applying dominated convergence gives
$
 \expect{\gamma(t, C_t) \| F^{\Delta}_t \|_2^2} =0,
$
for all $t\in[0,T]$, where $F^{\Delta}_t(x) := \Delta_t(x,\infty)$, and hence the uniqueness claim follows.
\end{proof}

\begin{lemma}\label{lemma:F_decay}  With $\pi=\mu,\nu$, we have $\int_0^{\infty} x \hspace{1pt}\di  \pi_t(x) < \infty$, for all $t\in[0,T]$, and hence $f^{\pi, \varepsilon}_t(x)$ and $F^{\pi, \varepsilon}_t(x)$ defined in \eqref{eq:sub_meas_mu_eps} are of order $o(1/x)$ as $x\rightarrow \infty$.
\end{lemma}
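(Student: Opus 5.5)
For the first moment, I will handle $\nu$ and $\mu$ separately and then pass to the mollified quantities. For $\nu$, Proposition \ref{prop:linearised_problem_nu} gives a density $v_t$ satisfying the Aronson estimate \eqref{eq:lamperti_aronson}. Hence $\int_0^\infty x\,v_t(x)\dx \le C t^{-1/2}\int_0^\infty x e^{-cx^2}\dx<\infty$ for $t\in(0,T]$. At $t=0$ we have $\nu_0=\mu_0=P_0\circ\theta_{a_0}^{-1}$, which has Gaussian moments by Assumption \ref{assump:ic_assumptions}.

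For $\mu$, I use that it is a limit point of $\mu^n$. For each fixed $t$, $\langle \mu^n_t, x\wedge K\rangle$ converges in law to $\langle\mu_t,x\wedge K\rangle$ outside a co-countable set of times. Fatou's lemma then gives
\[
\E\bigl[\langle \mu_t, x\rangle\bigr]\le \liminf_{K}\sup_n \E\bigl[\langle\mu^n_t,x\rangle\bigr].
\]
The right-hand side is bounded uniformly in $n$, as follows. Conditioning on $\mathcal{G}^{i,n}_T$ and using the estimate \eqref{eq:particle_aronson}, together with the fact that $X^{i,n}_t-A^n_t$ and $X^{i,n}_t-\hat A^{i,n}_t$ differ by at most $|\alpha|$, gives
\[
\E\bigl[\mathbf 1_{t<\tau^i}(X^{i,n}_t-A^n_t)\bigr]\le C(1+t^{-1/2}).
\]
Alternatively, one can use the exponential moment bound already noted before \eqref{eq:weak_limit_point}. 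For the exceptional times and $t=0$, I use right-continuity of $\mu$ in the M1 sense and lower semicontinuity of $\pi\mapsto\langle \pi,x\rangle$. This yields $\int x\,\di\mu_t<\infty$ almost surely for every $t$.

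For the decay, I bound $0\le G_\varepsilon(x,y)\le p_\varepsilon(x-y)$. Splitting the $y$-integral into $y<x/2$ and $y\ge x/2$ gives
\[
x\,|f^{\pi,\varepsilon}_t(x)| \le x\,\frac{e^{-x^2/(8\varepsilon)}}{\sqrt{2\pi\varepsilon}} + \frac{1}{\sqrt{2\pi\varepsilon}}\int_{[x/2,\infty)}2y\,\di\pi_t(y) .
\]
Both terms tend to $0$ as $x\to\infty$, the second because of the first moment and dominated convergence. For $F^{\pi,\varepsilon}_t(x)=\int_0^\infty\int_x^\infty G_\varepsilon(z,y)\dz\,\di\pi_t(y)$, the inner integral is at most $1$. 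It is at most $e^{-x^2/(8\varepsilon)}$ when $y<x/2$, by a Gaussian tail bound. This gives
\[
xF^{\pi,\varepsilon}_t(x)\le xe^{-x^2/(8\varepsilon)}+2\int_{[x/2,\infty)}y\,\di\pi_t(y)\to0 .
\]

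The main obstacle is the uniform-in-$n$ moment for $\mu$ and its transfer to every $t$, not only to continuity times of the limit. If the M1 argument is delicate, I will instead use that $t\mapsto \E\langle\mu_t,x\wedge K\rangle$ is controlled through \eqref{eq:weak_form_1} with a test function $\phi_K$ approximating $x$. This gives a Gronwall bound valid for all $t$.
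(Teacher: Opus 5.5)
Your proposal is correct and follows the paper's route. You use the Aronson bound for $\nu$, weak convergence together with the uniform conditional Aronson estimate of Theorem~\ref{thm:perturb_density} for $\mu$, and a split of the $y$-integral with Gaussian decay and dominated convergence for the $o(1/x)$ claim, filling in details the paper leaves implicit (at $t=0$ simply use $\mu_0=P_0\circ\theta_{a_0}^{-1}$ directly, since your bound $C(1+t^{-1/2})$ degenerates as $t\downarrow 0$ and so right-continuity does not give it there).
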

\begin{proof}
The first claim is contained in Proposition \ref{prop:linearised_problem_nu} for $\nu$, while it follows by the weak convergence and Theorem \ref{thm:perturb_density} for $\mu$. The remaining claims follow readily from this, by splitting the corresponding integrals, using the decay of the exponential kernels, and applying DCT and Fubini.
\end{proof}

\begin{lemma}[Controlling the commutators]\label{lemma:error_terms_decay_linear}
Let $\phi(t,x) \in \{ b_A(t,x), \sigma_{A}(t, x)^2 \}$. Then, the commutator $c^{\phi,\varepsilon}_t(x)$ defined in \eqref{eq:commutator} satisfies
	\begin{equation*}
	  \| c^{\phi,\varepsilon}_t \|^2_2  \leq C\sqrt{\varepsilon},
	\end{equation*}
    for all $t\in[0,T]$, where $C$ only depends on $\phi$ and $T>0$.
\end{lemma}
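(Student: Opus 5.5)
We need to bound the commutator $c^{\phi,\varepsilon}_t(x)=\int_0^\infty(\phi(t,y)-\phi(t,x))G_\varepsilon(x,y)\,\di\Delta_t(y)$ in $L^2(\R^+)$. The plan is to use the Lipschitz continuity of $\phi$ in space, uniform in $t\in[0,T]$. For $\sigma_A^2$ this follows from Assumption \ref{assump:coefficient_assumptions}, since $\sigma$ is bounded and Lipschitz. For $b_A$ we use the Lipschitz bound on $b$. This gives
\[
|c^{\phi,\varepsilon}_t(x)|\le K_\phi\int_0^\infty |x-y|\,|G_\varepsilon(x,y)|\,\di|\Delta_t|(y).
\]
Since $|G_\varepsilon(x,y)|\le p_\varepsilon(x-y)+p_\varepsilon(x+y)$ and $|x-y|\le x+y$ for $x,y\ge0$, both pieces are controlled by the kernel $k_\varepsilon(z):=|z|p_\varepsilon(z)$, evaluated at $z=x-y$ and $z=x+y$. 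A direct computation shows $\|k_\varepsilon\|_{L^1}=\sqrt{2\varepsilon/\pi}$ and $\|k_\varepsilon\|_{L^\infty}\lesssim 1$ (the maximum is attained at $|z|=\sqrt\varepsilon$, where the value is of order $\sqrt\varepsilon/\sqrt\varepsilon$).

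The cleanest way to get the $\sqrt\varepsilon$ rate is to exploit the densities, and this is where the structure of the problem is used. By Proposition \ref{prop:linearised_problem_nu}, $\nu_t$ has a density $v_t$ with $\|v_t\|_2^2\le C/\sqrt t$. For $\mu_t$ we would like the same. Before Theorem \ref{thm:uniqueness_linear} is proved, however, $\mu_t$ is only a weak limit of empirical measures. For this we use Theorem \ref{thm:perturb_density} and Corollary \ref{cor:L2-sqrt}: the expected empirical measure $\E[\mu^n_t]$ has density $\frac1n\sum_i\E[p^{i,n}(t,\cdot+\hat A^{i,n}_t)\ldots]$, with the shift handled approximately since $A^n-\hat A^{i,n}=O(1/n)$. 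It is therefore bounded in $L^2$ uniformly. The cleaner route, though, is to avoid densities of $\mu$ altogether and use only that $|\Delta_t|$ has total mass at most $2$.

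With total mass only, Young's inequality gives
\[
\Bigl\|\int k_\varepsilon(\cdot-y)\,\di|\Delta_t|(y)\Bigr\|_2\le \|k_\varepsilon\|_2\,|\Delta_t|(\R^+)\le 2\|k_\varepsilon\|_2,
\]
and similarly for the reflected term. Here
\[
\|k_\varepsilon\|_2^2=\int z^2p_\varepsilon(z)^2\,\dz=\frac{1}{2\pi\varepsilon}\int z^2e^{-z^2/\varepsilon}\,\dz\simeq \varepsilon^{-1}\varepsilon^{3/2}=\sqrt\varepsilon .
\]
Hence $\|c^{\phi,\varepsilon}_t\|_2^2\le 8K_\phi^2\|k_\varepsilon\|_2^2\le C\sqrt\varepsilon$, uniformly in $t\in[0,T]$ and in $\omega$. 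This is exactly the claimed bound, and no density information is needed.

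The main point to check carefully is the $x+y$ reflected term. There we need $\int_0^\infty\bigl(\int (x+y)p_\varepsilon(x+y)\,\di|\Delta_t|(y)\bigr)^2\dx\le \|k_\varepsilon\|_2^2\,|\Delta_t|(\R^+)^2$. This follows by the same Minkowski integral inequality, since for each fixed $y$ the map $x\mapsto k_\varepsilon(x+y)$ has $L^2(\R^+)$ norm at most $\|k_\varepsilon\|_2$. The other subtle point is that $b$ is only Lipschitz in $x$ uniformly in $t$, and $A'_t$ does not enter the commutator because it is independent of $x$. Therefore $K_\phi$ depends only on the Lipschitz constants of $b$ and $\sigma^2$ on $[0,T]$, giving the stated dependence of $C$ on $\phi$ and $T$ only.
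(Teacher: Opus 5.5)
Your proof is correct and essentially the same as the paper's. Both arguments use only three ingredients: the spatial Lipschitz bound on $\phi$, the total mass bound $|\Delta_t|(\R^+)\le 2$, and the fact that the kernel $z\mapsto |z|p_\varepsilon(z)$ has squared $L^2$ norm of order $\sqrt{\varepsilon}$. The paper obtains the last fact from the pointwise bound $r\,p_\varepsilon(r)\le\kappa\sqrt{\varepsilon}\,p_{2\varepsilon}(r)$ followed by Jensen, whereas you compute $\|k_\varepsilon\|_2$ directly and apply Minkowski.

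Three small remarks. First, the separate reflected term is unnecessary, since $0\le G_\varepsilon(x,y)\le p_\varepsilon(x-y)$ for $x,y\ge0$. Second, with your splitting the constant should be $16K_\phi^2$ rather than $8K_\phi^2$; this does not affect the result. Third, the digression on densities of $\mu_t$ is not used and can be dropped.
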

\begin{proof} By Assumption \ref{assump:coefficient_assumptions}, $x\mapsto\phi(t,x)$ is Lipschitz uniformly in $t\in[0,T]$. Hence, we get
\[
	|c^{\phi,\varepsilon}_t(x)|
	\leq C_\phi \int_0^\infty |x - y| \, p_\varepsilon(x-y) \, \di |\Delta_t|(y).
\]
We can bound $r p_\varepsilon(r) \leq \kappa \sqrt{\varepsilon} p_{2\varepsilon}(r)$ since $r \, e^{-r^2/4\varepsilon}$ is maximised at $r=\sqrt{2\varepsilon}$, so
\[
	|c^{\phi,\varepsilon}_t(x)|
	\leq
	\kappa C_\phi \sqrt{\varepsilon} \int_0^\infty p_{2\varepsilon}(x-y) \, \di |\Delta_t|(y).
\]
Squaring this, applying Jensen's inequality, and 
integrating in $x$, we get
\[
	\Vert c^{\phi,\varepsilon}_t\Vert_2^2
	\leq 2\kappa^2 C_\phi^2 \varepsilon \int_0^\infty \Vert p_{2\varepsilon} \Vert_2^2 \,\di |\Delta_t|(y) \leq C\sqrt{\varepsilon}
\]
with $C=4\kappa^2 C_\phi^2/\sqrt{8\pi}$, since $\Vert p_{2\varepsilon} \Vert_2^2= 1/\sqrt{8\pi \varepsilon}$ and $|\Delta_t|(0,\infty)\leq 2$.
\end{proof}

\begin{lemma}[Controlling the remainder]\label{lemma:error_terms_decay_linear_2}
    The remainder $r_t^\varepsilon(x)$ defined in \eqref{eq:remainder} satisfies that, for every $\theta \in (0, \tfrac{1}{2})$, there exists $\delta \in (1-\theta,1)$ and $C=C(\theta) > 0$ such that
\[
\E\big[\|r^\varepsilon_t\|_{2}^2\big]
	\leq C \varepsilon^{\frac{1}{2} - \theta} t^{-\delta},
\]
for all $t \in (0,T]$, for $\varepsilon > 0$ sufficiently small.
\end{lemma}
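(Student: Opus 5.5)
The plan is to exploit the fact that $p_\varepsilon(x+y)$ only sees mass of $\Delta_t$ near the boundary $y=0$, and that near the boundary both $\mu_t$ and $\nu_t$ have densities bounded by $Ct^{-1/2}$. The density bound for $\mu_t$ is inherited from the particle system via Theorem~\ref{thm:perturb_density}: the Aronson bound \eqref{eq:particle_aronson} holds for the conditional densities uniformly in $n$ and $i$, so it passes to the expected measure $\E[\mu_t]$ in the limit. For $\nu_t$ it is given pathwise by Proposition~\ref{prop:linearised_problem_nu}. Since $|b_A(t,y)-A'_t|\le K(1+y)$ with $A'$ bounded by \eqref{eq:A_weak_derivative}, we first bound
\[
|r^\varepsilon_t(x)|\le K\int_0^\infty (1+y)p_\varepsilon(x+y)\,\di(\mu_t+\nu_t)(y).
\]
Squaring and integrating in $x$ gives
\[
\|r^\varepsilon_t\|_2^2\le K^2\iint (1+y)(1+y')\,k_\varepsilon(y+y')\,\di\pi_t(y)\,\di\pi_t(y'),
\]
where $\pi_t=\mu_t+\nu_t$ and $k_\varepsilon(s):=\int_0^\infty p_\varepsilon(x+y)p_\varepsilon(x+y')\dx\le C\varepsilon^{-1/2}e^{-(y+y')^2/(4\varepsilon)}$.

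The main obstacle is that this is quadratic in $\pi_t$, while for $\mu_t$ we only control first moments in expectation, namely $\E[\mu_t(\di y)]\le Ct^{-1/2}e^{-cy^2}\dy$. To avoid needing pair estimates for the random limit $\mu$, we interpolate instead. We bound one factor pathwise using only total mass,
\[
\int (1+y')k_\varepsilon(y+y')\,\di\pi_t(y')\le C\varepsilon^{-1/2}\sup_{y'}(1+y')e^{-(y+y')^2/(4\varepsilon)}\cdot 2\le C\varepsilon^{-1/2}e^{-y^2/(8\varepsilon)}
\]
for small $\varepsilon$. This is too lossy on its own, so we apply it only to the fraction $\theta$ of the kernel. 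Concretely, we write
\[
k_\varepsilon\le \bigl(C\varepsilon^{-1/2}e^{-(y+y')^2/4\varepsilon}\bigr)^{\theta'}\bigl(C\varepsilon^{-1/2}e^{-(y+y')^2/4\varepsilon}\bigr)^{1-\theta'}
\]
and use H\"older in the $y'$-integration, with a small parameter tied to $\theta$.

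Alternatively, and more cleanly, we can use Proposition~\ref{prop:pairwise_bound}. The second moment $\E[\langle\mu_t\otimes\mu_t,g\rangle]$ is the limit of $\E[\langle\mu^n_t\otimes\mu^n_t,g\rangle]$, whose off-diagonal part has density at most $Ct^{-1}e^{-c\|(y,y')\|^2}$ and whose diagonal part is $O(1/n)$. Then
\[
\E\|r^\varepsilon_t\|_2^2\le Ct^{-1}\iint (1+y)(1+y')\varepsilon^{-1/2}e^{-(y+y')^2/4\varepsilon}\dy\,\di y'\le Ct^{-1}\varepsilon^{1/2}.
\]
The cross terms involving $\nu_t$ are handled the same way, using its pathwise density bound together with the one-particle bound for $\mu$.

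The stated form $t^{-\delta}$ with $\delta<1$ suggests interpolating between this $t^{-1}\varepsilon^{1/2}$ bound and a cruder $t$-uniform bound. The cruder bound is $\|r^\varepsilon_t\|_2^2\le C\varepsilon^{-1/2}$ pathwise, from the total mass computation above. Taking the geometric mean with weights $1-\theta'$ and $\theta'$ gives $\varepsilon^{\frac12-\theta}t^{-\delta}$ with $\delta=1-\theta'\in(1-\theta,1)$, after choosing $\theta'$ appropriately. The delicate step is justifying the passage of the pairwise bound to the limit $\mu$: the test function $k_\varepsilon$ is continuous and bounded for fixed $\varepsilon$, and the uniform exponential moments give uniform integrability, so weak convergence suffices. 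The resulting integrability of $t^{-\delta}$ on $(0,T]$ is exactly what is needed in the proof of Theorem~\ref{thm:uniqueness_linear}.
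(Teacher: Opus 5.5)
Your main route, via Proposition~\ref{prop:pairwise_bound}, is correct, but it organises the estimate differently from the paper.

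The paper's proof runs as follows:
\begin{itemize}
\item It first proves the pointwise bound \eqref{eq:remainder_pointwise_bound}, $|r^\varepsilon_t(x)|\le c\kappa e^{-x^2/2\varepsilon}\langle|\Delta_t|,p_{2\varepsilon}\rangle$, which gives $\|r^\varepsilon_t\|_2^2\le C\sqrt{\varepsilon}\,\langle|\Delta_t|,p_{2\varepsilon}\rangle^2$.
\item It splits $\langle|\Delta_t|,p_{2\varepsilon}\rangle$ at $z=\varepsilon^\lambda$ into the boundary-layer mass $|\Delta_t|(0,\varepsilon^\lambda)$ and a Gaussian tail.
\item It uses the pairwise estimate only to get $\E[\mu_t(a,b)^2]\le C|b-a|^2t^{-1}$.
\item It interpolates with the trivial bound $\mu_t(a,b)\le 1$ at the level of interval masses, and finally tunes $\lambda=(1-\theta)/2\delta$.
\end{itemize}

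You instead keep the whole quadratic form $\langle\pi_t\otimes\pi_t,(1+y)(1+y')k_\varepsilon\rangle$ and take its expectation directly. The $\mu\otimes\mu$ part uses the pair density, with the $O(1/n)$ diagonal. The terms involving $\nu_t$ use its pathwise density bound together with the one-particle bound on $\E[\mu_t]$. This yields the sharper intermediate estimate $Ct^{-1}\varepsilon^{1/2}$. Only then do you interpolate with the pathwise bound $C\varepsilon^{-1/2}$, via $\min(a,b)\le a^{1-\theta'}b^{\theta'}$ with $\theta'\in(0,\theta)$ and $\varepsilon\le 1$.

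What each approach buys:
\begin{itemize}
\item Yours is more direct and loses nothing in $\varepsilon$ before the interpolation. The limit passage is also fine, since for fixed $\varepsilon$ the test function is bounded and continuous.
\item The paper's isolates the mechanism: only mass within $\varepsilon^\lambda$ of the boundary matters. It needs only second moments of interval masses.
\item The paper's also produces the pointwise bound \eqref{eq:remainder_pointwise_bound}. This is reused in Section~\ref{subsect:proof_thms2.2-2.3} to control $\|r^\varepsilon_t\|_1$ when no a priori density bound is available.
\end{itemize}

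Drop the preliminary H\"older sketch altogether. No argument using only the first-moment bound on $\E[\mu_t]$ can give decay in $\varepsilon$. For example, a random point mass $\mu_t=\delta_U$ with $U$ uniform on $[0,1]$ has bounded expected density, yet $\E\bigl[\|r^\varepsilon_t\|_2^2\bigr]$ stays of order one. So the pairwise input is genuinely needed, exactly as in the paper.
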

\begin{proof} Recall $r^{\varepsilon}_t(x)
=  \langle \Delta_t, [b_A(t,\cdot) - A_t^\prime] \,  p_{\varepsilon}(x + \cdot) \rangle$, where $\Delta_t =\mu_t - \nu_t$. For $\mu$, we have
\[
\E\bigl[ (\mu^n(a,b))^2\bigr]\leq \frac{1}{n^2}\sum_{i\neq j} \Prob\bigl(X^{i,n}_t\in (a,b) , \,X^{j,n}_t\in(a,b) \bigr) + \frac{1}{n},
\]
so we can apply the uniform bound in Proposition \ref{prop:pairwise_bound} and deduce from the weak convergence (to the given limit point $\mu$) that $\E[(\mu(a,b))^2]\leq C|b-a|^2t^{-1}$, for all $t\in(0,T]$, for some $C>0$. For $\nu$, the density estimate in Proposition \ref{prop:linearised_problem_nu} directly gives $\nu_t(a,b)\leq c |b-a|t^{-1/2}$ and hence $\nu_t(a,b)^2\leq c^2 |b-a|^2t^{-1}$, for all $t\in(0,T]$, for some $c>0$. As both values are in $[0,1]$, we get
\begin{equation}\label{eq:square_boundary_behavious}
\E\bigl[(\mu_t(a,b))^2\bigr]\leq C_\delta |b-a|^{2\delta}t^{-\delta} \quad\text{and}\quad  (\nu_t(a,b))^2\leq C_\delta |b-a|^{2\delta}t^{-\delta}, 
\end{equation}
for all $t\in(0,T]$, for some $C_\delta>0$, for each $\delta\in[0,1]$, which we exploit later in the proof.

By Assumption \ref{assump:coefficient_assumptions}, we have $|b_A(t,x)-A^{\prime}_t|\leq c(1+|x|)$ for $t\in[0,T]$. Take $\varepsilon \in(0,1)$. Using that $y \, p_\varepsilon(y) \leq \kappa \sqrt{\varepsilon} \, p_{2\varepsilon}(y)$ as in the proof of Lemma \ref{lemma:error_terms_decay_linear}, we get
\begin{equation}\label{eq:remainder_pointwise_bound}
	|r^\varepsilon_t(x)|
	\leq c e^{-x^2/2\varepsilon}
	\int_0^\infty (1+y) p_\varepsilon(y) \, \di |\Delta_t|(y) \leq c\kappa  e^{-x^2/2\varepsilon}\int_0^\infty p_{2\varepsilon}(y) \, \di |\Delta_t|(y).
\end{equation}
Squaring and integrating in $x$, we get $\|r^\varepsilon_t\|_{2}^2
	\leq C \sqrt{\varepsilon} \langle|\Delta_t| ,p_{2\varepsilon} \rangle^2$ with $C=c^2\kappa^2 \sqrt{\pi}/2$. Writing
\[
\langle|\Delta_t| ,p_{2\varepsilon} \rangle \leq \frac{1}{\sqrt{4\pi \varepsilon}} |\Delta_t|(0,z) + 2p_{2\varepsilon}(z)
\]
and taking $z=\varepsilon^\lambda$ for $\lambda \in(0,1/2)$, it follows that
\[
	\|r^\varepsilon_t\|_{2}^2 \leq \frac{C}{\sqrt{\varepsilon}} \bigl(\mu_t(0,\varepsilon^\lambda)^2 + \nu_t(0,\varepsilon^\lambda)^2 + e^{-\varepsilon^{2\lambda-1}/2}\bigr),
\]
for a fixed $C>0$. By \eqref{eq:square_boundary_behavious}, we then get
\[
	\E[\|r^\varepsilon_t\|_{2}^2]
	\leq C \varepsilon^{-1/2}\varepsilon^{2\lambda\delta} t^{-\delta}
		+ C \varepsilon^{-1/2} e^{-\varepsilon^{2\lambda - 1}/2}.
\]
for each $\delta\in[0,1]$, for some $C=C(\delta)>0$, so the claim follows by choosing $\delta \in (1-\theta, 1)$
and setting $\lambda = (1-\theta)/2\delta < 1/2$, for any given $\theta\in(0,1/2)$.
\end{proof}

\begin{lemma}[Boundary term]\label{lemma:square_of_I_diff} We have
	\begin{equation*}
	\lim_{\varepsilon \downarrow 0} \expect{ \int_0^t \int_0^{\infty} p_{\varepsilon}(x) F^{\Delta, \varepsilon}_s(x)  \dx \di  \big( I^{\mu}_s - I_s^{\nu} \big)}
	= - \frac{1}{4} \expect{ \big( I^{\mu}_t - I_t^{\nu}  \big)^2   }.
	\end{equation*}
\end{lemma}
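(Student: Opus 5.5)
The plan is to identify the inner $\dx$-integral as $F^{\Delta,\varepsilon}_s$ evaluated near the boundary, pass to the limit $\varepsilon\downarrow0$, and then use the elementary identity $\int_0^t J_{s}\,\di J_s=\tfrac12 J_t^2$ for the continuous process $J:=I^\mu-I^\nu$. Two facts drive this. First, $F^{\Delta,\varepsilon}_s(0)=\int_0^\infty f^{\Delta,\varepsilon}_s(z)\dz=\langle\Delta_s,\int_0^\infty G_\varepsilon(z,\cdot)\dz\rangle$, and $\int_0^\infty G_\varepsilon(z,y)\dz=\Prob(y+\sqrt{\varepsilon}W>0)-\Prob(-y+\sqrt\varepsilon W>0)$. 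This tends to $\mathbf 1_{y>0}$ as $\varepsilon\downarrow0$, so $F^{\Delta,\varepsilon}_s(0)\to\Delta_s((0,\infty))=-(I^\mu_s-I^\nu_s)$. Second, $p_\varepsilon(x)$ restricted to $x\ge0$ has mass $\tfrac12$ and concentrates at $0$. Heuristically, then, the integral tends to $\tfrac12\,\E[\int_0^t(-J_s)\,\di J_s]=-\tfrac14\E[J_t^2]$. The main difficulty is justifying this, because $p_\varepsilon(x)$ and $F^{\Delta,\varepsilon}_s(x)$ vary on the same scale $\sqrt\varepsilon$. A naive "evaluate at $0$" would therefore be wrong, and the computation has to be done exactly.

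\textbf{Step 1 (exact computation of the inner integral).} I will write $F^{\Delta,\varepsilon}_s(x)=\langle\Delta_s,K_\varepsilon(x,\cdot)\rangle$ with $K_\varepsilon(x,y)=\int_x^\infty G_\varepsilon(z,y)\dz=\Phi\bigl((y-x)/\sqrt\varepsilon\bigr)-\Phi\bigl(-(x+y)/\sqrt\varepsilon\bigr)$, where $\Phi$ is the standard normal distribution function. Then
\[
\int_0^\infty p_\varepsilon(x)K_\varepsilon(x,y)\dx=:k\bigl(y/\sqrt\varepsilon\bigr)
\]
for an explicit bounded function $k$. Using symmetry of Gaussian pairs, $k(u)\to\tfrac12\int_0^\infty 2\varphi(w)\dw=\tfrac12$ as $u\to\infty$. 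At $u=0$ one gets $k(0)=\int_0^\infty\varphi(w)(\Phi(-w)-\Phi(-w))\,\di w=0$. Hence the $\dx$-integral equals $\langle\Delta_s,k(\cdot/\sqrt\varepsilon)\rangle$, and after Fubini the quantity in the lemma is $\E[\int_0^t\langle\Delta_s,k(\cdot/\sqrt\varepsilon)\rangle\,\di J_s]$.

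\textbf{Step 2 (limit $\varepsilon\downarrow0$).} I write $k(\cdot/\sqrt\varepsilon)=\tfrac12\mathbf 1_{(0,\infty)}+e_\varepsilon$ with $|e_\varepsilon(y)|\le C\mathbf 1_{y<\varepsilon^{\lambda}}+Ce^{-\varepsilon^{2\lambda-1}/4}$ for some $\lambda<\tfrac12$. The boundary-layer error is then controlled by $|\Delta_s|(0,\varepsilon^\lambda)$ integrated against $\di I^\mu_s+\di I^\nu_s$. On the $\nu$ side, $\di I^\nu_s=\sigma^2\gamma\,v_s(0)\ds\le Cs^{-1/2}\ds$ and $\nu_s(0,\varepsilon^\lambda)\le C\varepsilon^\lambda s^{-1/2}$ by Proposition~\ref{prop:linearised_problem_nu}. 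On the $\mu$ side, I will use \eqref{eq:bc_as_limit} to represent $\E[\int H\,\di I^\mu]$ as a $\delta$-limit and then Cauchy--Schwarz together with the bound \eqref{eq:square_boundary_behavious} on $\E[\mu_s(0,a)^2]$. This requires the integrand $\mu_s(0,\varepsilon^\lambda)$ to be approximable by continuous adapted processes, which I will handle by smoothing the indicator. I expect this interaction of the $\mu$-boundary layer with the non-absolutely-continuous $\di I^\mu$ to be the main obstacle. If it fails, the fallback is to pass the estimate through the particle system via Proposition~\ref{prop:pairwise_bound} and weak convergence, as in the proof of Lemma~\ref{lemma:error_terms_decay_linear_2}. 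Both contributions are $O(\varepsilon^{\lambda\delta'})\to0$. The main term is $\tfrac12\langle\Delta_s,\mathbf 1_{(0,\infty)}\rangle=-\tfrac12J_s$; any atom of $\Delta_s$ at $0$ is excluded because $\mu_s$ and $\nu_s$ have no mass at $0$ for $s>0$, by the density bounds.

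\textbf{Step 3 (conclusion).} By Proposition~\ref{prop:In_key_martingale_result}, $I^\mu$ is continuous, and $I^\nu$ is absolutely continuous. So $J$ is a continuous finite-variation process with $J_0=0$, and pathwise $\int_0^t J_s\,\di J_s=\tfrac12J_t^2$. Therefore the limit equals $-\tfrac12\cdot\tfrac12\E[J_t^2]=-\tfrac14\E[(I^\mu_t-I^\nu_t)^2]$.
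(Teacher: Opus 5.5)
Your argument is correct in substance, and its first and last steps match the paper's. The paper also applies Fubini and shows that the kernel $\int_0^\infty p_\varepsilon(x)\int_x^\infty G_\varepsilon(z,y)\dz\dx$ (your $k(y/\sqrt{\varepsilon})$) tends to $\tfrac12$ for every $y>0$; it does this with a Gaussian tail bound rather than your closed form in $\Phi$. It then concludes with $\int_0^t J_s\,\di J_s=\tfrac12 J_t^2$, as you do.

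The routes differ in Step~2. The paper argues purely qualitatively:
\begin{itemize}
\item the kernel is bounded and converges pointwise, so dominated convergence in $y$ gives $\int_0^\infty p_\varepsilon(x)F^{\Delta,\varepsilon}_s(x)\dx\to\tfrac12\Delta_s(\R^+)$ whenever $\mu_s(\{0\})=\nu_s(\{0\})=0$;
\item a second dominated convergence (the variation of $J$ is at most $2$) moves the limit through $\E[\int_0^t\cdots\,\di J_s]$;
\item atoms at $0$ are excluded for $\nu$ by its density, and for $\mu$ by applying Fatou's lemma to \eqref{eq:bc_as_limit}, which forces $\liminf_{\delta\downarrow0}\langle\mu_s,\sigma_A(s,\cdot)^2\psi_\delta\rangle<\infty$ for a.e.~$s$.
\end{itemize}
You instead bound the error by boundary-layer masses and estimate these against $\di I^\mu+\di I^\nu$ with the machinery of Lemma~\ref{lemma:error_terms_decay_linear_2}. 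This is heavier, but it gives a rate. It also makes explicit how integrals against $\di I^\mu$ are controlled, which matters because $I^\mu$ is only known to be continuous at this stage. The paper's Fatou argument excludes atoms only for Lebesgue-a.e.~$s$ and leaves the passage to $\di I^\mu$-a.e.~$s$ implicit.

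Two remarks on where that effort is actually needed.

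First, the error term requires no estimate at all. Since $e_\varepsilon(0)=k(0)=0$ and $e_\varepsilon\to0$ boundedly on $(0,\infty)$, dominated convergence gives $\langle\Delta_s,e_\varepsilon\rangle\to0$ for every $(s,\omega)$. Dominated convergence against $|\di J|$, and then in $\omega$, finishes it. For the same reason, evaluating at $x=0$ would not give a wrong limit: $K_\varepsilon(0,y)=2\Phi(y/\sqrt{\varepsilon})-1\to\mathbf 1_{y>0}$ as well. The two $\sqrt{\varepsilon}$-scales interact only through the vanishing boundary-layer mass of $\Delta_s$.

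Second, the one place where the possibly singular measure $\di I^\mu$ genuinely matters is the atom in your main term. You need $\mu_s(\{0\})=0$ for $\di I^\mu_s$-a.e.~$s$, almost surely. However, \eqref{eq:square_boundary_behavious} only gives this almost surely for each fixed $s$, so ``by the density bounds'' does not settle it. Your own estimate does settle it, once applied to a smooth cutoff $\chi_a\geq\mathbf 1_{[0,a]}$ that includes the point $0$:
\[
\E\Bigl[\int_0^t\mu_s(\{0\})\,\di I^\mu_s\Bigr]\leq\E\Bigl[\int_0^t\langle\mu_s,\chi_a\rangle\,\di I^\mu_s\Bigr]\to0 \quad\text{as } a\downarrow0.
\]
For the Cauchy--Schwarz step there you also need $\sup_{\delta>0}\E[\langle\mu_s,\psi_\delta\rangle^2]\leq Cs^{-1}$. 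You do not state this, but it follows from Proposition~\ref{prop:pairwise_bound}. Alternatively, it follows from the bound $\E[\mu_s(a,b)^2]\leq C|b-a|^2s^{-1}$ underlying \eqref{eq:square_boundary_behavious}, applied on a grid of mesh $\sqrt{\delta}$.
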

\begin{proof}
Consider
	\begin{equation*}
	\int_0^{\infty} p_{\varepsilon} (x) F^{\Delta, \varepsilon}_s(x) \dx
	=\int_0^{\infty} \int_0^{\infty} p_{\varepsilon} (x) \int_x^{\infty}  G_{\varepsilon}(z,y) \dz \dx \di (\mu_s - \nu_s )(y),
	\end{equation*}
    where we have applied Fubini's theorem to exchange the order of integration. For any $y > 0$, by definition of $p_\varepsilon$ and $G_\varepsilon$, we get
    \begin{align}
    &\Bigl| \int_0^{\infty} p_{\varepsilon} (x) \int_x^{\infty}  G_{\varepsilon}(z,y) \dz \dx - \tfrac{1}{2} \Bigr| = \Bigl| \int_0^{\infty} p_{\varepsilon} (x) \Bigl( \int_x^{\infty}  G_{\varepsilon}(z,y) \dz - 1 \Bigr) \dx \Bigr| \nonumber \\
    & \quad \leq  \int_{y/2}^{\infty} p_{\varepsilon} (x) \dx   +  \int_{0}^{y/2} p_{\varepsilon} (x) \int_0^{x}  G_{\varepsilon}(z,y) \dz \dx 
    \leq  \sqrt{2} e^{-y^2/16\varepsilon} +  \sqrt{2} e^{-y^2/16\varepsilon}  \int_{0}^{y/2} p_{\varepsilon} (x) \dx ,
    \label{eq:two_terms_pG}  
    \end{align}
where have exploited that $x\geq y/2$ to bound $p_\varepsilon(x)$ in the first term, and that $z\leq x \leq y/2 $ to bound $G_\varepsilon$ in the second term. Since \eqref{eq:two_terms_pG} tends to zero as $\varepsilon$ goes to zero, for every $y>0$, it follows from dominated convergence that
	\begin{align}
	\lim_{\varepsilon \downarrow 0} \int_0^{\infty} p_{\varepsilon} (x) F^{\Delta, \varepsilon}_s(x) \nonumber \dx
	&= \lim_{\varepsilon \downarrow 0} \int_0^{\infty} \int_0^{\infty} p_{\varepsilon} (x) \int_x^{\infty}  G_{\varepsilon}(z,y) \dz \dx \di (\mu_s - \nu_s )(y) \nonumber \\
	&= \int_0^{\infty} \Big( \lim_{\varepsilon \downarrow 0} \int_0^{\infty} p_{\varepsilon} (x) \int_x^{\infty}  G_{\varepsilon}(z,y) \dz \dx \Big) \di (\mu_s - \nu_s )(y) \nonumber \\
    &= \tfrac{1}{2}\bigl(\mu_s(\R^+) - \nu_s(\R^+)\bigr), \label{eq:limit_pe_FDelta}
	\end{align}
 a.s.~for a.e.~$s\in[0,T]$. That \eqref{eq:two_terms_pG} vanishes for $\varepsilon>0$ is sufficient since $\nu_s(\{0\})=0=\mu_s(\{0\})$ for a.e.~$s\in[0,T]$. For $\nu$ this holds by Proposition~\ref{prop:linearised_problem_nu} and for $\mu$ we have
\[
0\leq \E\biggl[ \int_0^T  \liminf_{\delta \downarrow 0} \langle\mu_s, \sigma_A(s, \cdot)^2 \psi_\delta  \rangle \ds \biggr] < \infty,
\]
by \eqref{eq:bc_as_limit} and Fatou's lemma. Finally, applying DCT again to pass the limit inside the expectation and the time-integral, and recalling that $I_t^{\mu} = 1 - \mu_t(\R^+)$ and likewise for $I_t^{\nu}$, we get
	\begin{equation*}
	\lim_{\varepsilon \downarrow 0} \expect{ \int_0^t \int_0^{\infty} p_{\varepsilon}(x) F^{\Delta, \varepsilon}_s(x) \dx \di \big( I^{\mu}_s - I_s^{\nu} \big)}
	= \tfrac{1}{2}\expect{ \int_0^t \big( I^{\nu}_s - I_s^{\mu} \big)  \di \big( I^{\mu}_s - I_s^{\nu} \big)}.
	\end{equation*}
	Since $I^\mu-I^\nu$ is continuous and of finite variation on $[0,T]$, being the difference of two  continuous increasing processes, the Stieltjes integral on the right-hand side of the above expression is equal to $-(I^\mu_t-I^\nu_t)^2/2\leq 0$. This completes the proof.
\end{proof}

\section{Uniqueness of the free boundary problem}\label{sect:free_boundary_uniqueness}

As in the previous sections, we continue to work in the frame of the moving boundary. The price we pay for having converted to a fixed domain is that we must deal with nonlinear (and nonlocal) quantities in the adjusted drift and diffusion coefficients. In the previous section, this did not affect the analysis, since we were only comparing with the corresponding decoupled weak solution. 

Following on from Propositions \ref{prop:rand_weak_solution} and \ref{prop:linearised_problem_nu}, we begin by stating a stronger and deterministic notion of weak solution. In Section \ref{sect:unique_weak_soln}, we show that there can be at most one such solution. Then, Section \ref{subsect:proof_thms2.2-2.3} invokes this to conclude the proof of Theorems \ref{thm:free_boundary_problem} and \ref{thm:summary_contributions}.

\begin{defn}[Weak solution in the frame of the free boundary]\label{def:clas_weak_formulation} Let $(\mu_t)_{t\in[0,T]}$ be a measure flow such that $\mu_t\in \mathcal{M}_{\leq 1}(\R^+)$ and, for $t>0$, $\di \mu_t(x)=u_t(x)\dx$ with $u_t \in C(\R^+)$, where $u$ satisfies \begin{equation}\label{eq:density_control_uniqueness}
		\bigl\Vert u_t \bigr\Vert_2^2 \leq \frac{c_0}{\sqrt{t}}\quad \text{and}\quad |u_t(x)|\leq \frac{c_1}{\sqrt{t}}e^{-c_2x^2}, \quad x\in\R^+,\; t\in(0,T],
	\end{equation}
for some constants $c_0,c_1>0$. We say $\mu$ is a weak solution in the frame of the free boundary if
	\begin{align}
		\di \langle\mu_t,\phi\rangle	&=\langle\mu_{t},\tfrac{{1}}{2}\sigma_{A}(t,\cdot)^{2}\partial_{xx}\phi\rangle\dt+\langle\mu_{t}, (b_{A}(t,\cdot)-A^\prime_{t})\partial_{x}\phi\rangle\dt + \di Z^{\mu,\phi}_t, \label{eq:weak_form} \\
		 \di Z_t^{\mu, \phi}
		&=    \bigl(\tfrac{1}{2} \partial_x \phi(0) -  \gamma(t, C_t)\phi(0) \bigr) \sigma_A(t, 0)^2 u_t(0)  \dt, \label{eq:bc_with_density}
	\end{align}
	with $\mu_0 = P_0 \circ \theta^{-1}_{a_0}$ and $Z_0^{\mu}=0$, for all $t\in[0,T]$ and $\phi \in C^{\infty}_b(\R)$, where
	\begin{equation}\label{eq:ACI_defn_weak_sol}
		A_t=a_0 + \alpha \! \displaystyle \int_{t-\bar{d}}^{t}\varrho(t-s)I_{s} \ds, \;\;
		C_t = \!\int_{t-\bar{d}}^{t}\varrho(t-s)(I_{s}-I_{s-\bar{d}}) \ds, \;\;  I_t= 1- \mu_t(\R^+).
	\end{equation}
\end{defn}

For our estimates, we shall exploit that  $A$ in \eqref{eq:ACI_defn_weak_sol} is absolutely continuous on $[0,T]$ with
\begin{equation}\label{eq:A_prime_for_uniqueness}
    A^\prime_t = - \alpha \varrho(\bar{d})I_{t-\bar{d}} + \alpha \int_{0}^{\bar{d}}I_{t-s} \di \varrho(s).
    \end{equation}
To see this, recall that by Assumption~\ref{assump:coefficient_assumptions} $\varrho$ is of bounded variation and right-continuous, and $\varrho(0) = 0$, so Fubini's theorem and an integration by parts give
\begin{align*}
\int_{t-\bar{d}}^t \varrho(t-s) I_{s} \ds
&= \int_0^{\bar{d}} \!\! \varrho(r) I_{t-r} \dr
= - \int_0^{\bar{d}} \!\! \varrho(r) \di \Bigl(\int_0^{t-r} \hspace{-7pt} I_s \di s \Bigr) \\
&= \int_0^{\bar{d}} \! \int_0^{t-r} \!\!\!\! I_{s}\di s  \di \varrho(r) -  \varrho(\bar{d}) \! \int_0^{t-\bar{d}} \!\! \!\!I_{s}
\di s
= \int_0^t \! \int_0^{\bar{d} \wedge (t-s)} \!\!\!\!\!\!\!\!\! I_{s}\di \varrho(r) \di s -  \varrho(\bar{d}) \! \int_0^{t} \! I_{s-\bar{d}}
\di s ,
\end{align*}
where we recall our convention $I_t=0$ for $t\leq 0$.

\subsection{Uniqueness of weak solutions}\label{sect:unique_weak_soln}

For the arguments that follow, we stress that the second part of \eqref{eq:density_control_uniqueness} in particular guarantees that Lemmas \ref{lemma:F_decay}, \ref{lemma:error_terms_decay_linear}, \ref{lemma:error_terms_decay_linear_2}, and \ref{lemma:square_of_I_diff} from the previous section all apply.

\begin{thm}[Uniqueness]\label{thm:gen_uniqueness}
    Fix $T>0$, and let $\mu=(\mu_t)_{t\in[0,T]}$ and $\nu=(\nu_t)_{t\in[0,T]}$ be any two weak solutions satisfying Definition \ref{def:clas_weak_formulation}. If $\mu_0=\nu_0$, then $\mu_t=\nu_t$ for $t\in[0,T]$.
\end{thm}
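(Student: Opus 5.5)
We adapt the energy estimate from the proof of Theorem \ref{thm:uniqueness_linear}, now for two deterministic solutions $\mu,\nu$ whose fronts $A^\mu,A^\nu$, contagiousness $C^\mu,C^\nu$ and infected proportions $I^\mu,I^\nu$ differ. Set $\Delta_t=\mu_t-\nu_t$, $\delta I_t=I^\mu_t-I^\nu_t$. We test \eqref{eq:weak_form} for both solutions with $y\mapsto G_\varepsilon(x,y)$ and integrate over $[x,\infty)$, exactly as before, to obtain evolution equations for $F^{\mu,\varepsilon}_t$ and $F^{\nu,\varepsilon}_t$. Subtracting, we again get the terms of \eqref{eq:l2_delta}, now with coefficients frozen at $\mu$'s quantities. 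There are also extra cross terms: $\langle\nu_t,(\sigma_{A^\mu}^2-\sigma_{A^\nu}^2)\,\cdot\rangle$, $\langle \nu_t,(b_{A^\mu}-b_{A^\nu}-(A^{\mu\prime}-A^{\nu\prime}))\,\cdot\rangle$, and in the boundary term $(\gamma(t,C^\mu_t)\sigma_{A^\mu}(t,0)^2-\gamma(t,C^\nu_t)\sigma_{A^\nu}(t,0)^2)v_t(0)$. We form the weighted energy $\gamma(t,C^\mu_t)\|F^{\Delta,\varepsilon}_t\|_2^2$. The terms already present in Section \ref{sect:decoupled_mf} are handled exactly as there, using Lemmas \ref{lemma:F_decay}, \ref{lemma:error_terms_decay_linear}, \ref{lemma:error_terms_decay_linear_2} (which apply by \eqref{eq:density_control_uniqueness}). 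The diffusion term gives the good term $-\kappa_\sigma\int\gamma\|f^{\Delta,\varepsilon}\|_2^2$. The boundary term, by Lemma \ref{lemma:square_of_I_diff}, gives $-\tfrac14(\delta I_t)^2$ up to the cross-term part.

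Next we control the cross terms by $\sup_{s\le t}|\delta I_s|$. By \eqref{eq:ACI_defn_weak_sol} and Lipschitzness of $\sigma,b,\gamma$, $|A^\mu_t-A^\nu_t|+|C^\mu_t-C^\nu_t|\le K\sup_{s\le t}|\delta I_s|$. By \eqref{eq:A_prime_for_uniqueness} and the bounded variation of $\varrho$, also $|A^{\mu\prime}_t-A^{\nu\prime}_t|\le K\sup_{s\le t}|\delta I_s|$; this is where the convolution structure is used, so that no derivative of $I$ is needed. For the interior cross terms we integrate by parts in $x$, moving the derivative off $G_\varepsilon$ as in $\mathcal I^\sigma_\varepsilon$. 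With Cauchy--Schwarz, the $L^2$ bound $\|v_t\|_2^2\le c_0t^{-1/2}$ and Young's inequality, we bound them by $\theta\|f^{\Delta,\varepsilon}_s\|_2^2+C_\theta s^{-1/2}\sup_{r\le s}|\delta I_r|^2+C\|F^{\Delta,\varepsilon}_s\|_2^2$. The boundary cross term pairs $\int_0^\infty p_\varepsilon(x)F^{\Delta,\varepsilon}_s(x)\dx\to\tfrac12(-\delta I_s)$ with $|\gamma\sigma^2\text{-difference}|\,v_s(0)\le K\sup_{r\le s}|\delta I_r|\,c_1 s^{-1/2}$, using the pointwise bound in \eqref{eq:density_control_uniqueness}. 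This produces $C\int_0^t s^{-1/2}\sup_{r\le s}|\delta I_r|^2\ds$.

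Finally we close the estimate. We need $\sup_{s\le t}|\delta I_s|^2$ controlled by the energy: take $\phi=-\mathbf 1$, so that $\delta I_t$ is itself the boundary quantity, and the good term $\tfrac14(\delta I_t)^2$ appears on the left. Replacing $t$ by each $t'\le t$ and taking the supremum, with $\Phi(t):=\sup_{s\le t}\bigl(\|F^{\Delta}_s\|_2^2+(\delta I_s)^2\bigr)$ after letting $\varepsilon\downarrow0$, we get $\Phi(t)\le C\int_0^t(1+s^{-1/2})\Phi(s)\ds$. A Gr\"onwall inequality with integrable kernel gives $\Phi\equiv0$, hence $F^\Delta\equiv0$ and $\mu=\nu$.

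The main obstacle is making the boundary cross term compatible with the sign of Lemma \ref{lemma:square_of_I_diff}. The limit $\varepsilon\downarrow0$ of the pairing must be taken while the pairing is integrated against a non-monotone signed density, and the $s^{-1/2}$ singularity near $t=0$ must stay integrable. I would first try to handle this by keeping $\gamma(\cdot,C^\mu)$ as the weight and writing the difference of the boundary fluxes as $\di(I^\mu-I^\nu)$ plus a remainder proportional to $v_s(0)\,|\delta C_s|+|\delta A_s|$, as sketched above.
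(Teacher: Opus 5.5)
Your proposal is correct. Up to the closing step it runs parallel to the paper's proof. It uses the same $G_\varepsilon$-mollified energy weighted by $\gamma(t,C_t)$ and the same treatment of the terms inherited from Theorem \ref{thm:uniqueness_linear}. The $\sigma$- and $b$-cross terms get the same Young-type bounds using $\|v_s\|_2^2\le c_0s^{-1/2}$, and the weighted boundary flux is split in the same way into $\di(I^\mu-I^\nu)$ plus a $\gamma$-difference remainder.

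The genuine difference is how you control $|\delta I|^2$.
\begin{itemize}
\item \emph{The paper's closing.} It discards $\lim_{\varepsilon\downarrow0}2\mathcal{I}^p_\varepsilon=-\tfrac12(\delta I_t)^2$, using only its sign. It then bounds $|\delta I_s|^2$ by the cutoff (trace-type) inequality \eqref{eq:diff_I_delta_bound}, absorbing the $\|f^\Delta_s\|_2^2$ part into the diffusive dissipation through a time-dependent $\delta(s)$. Because that inequality is pointwise in $s$, the delay terms must be pushed through Fubini to produce the weight $\Phi(s)=\int_0^{\bar d}(s+r)^{-1/2}\di\Vrho{r}$ in front of $|\delta I_s|^2$. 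The interpolation then yields the coefficient $(\mathfrak{K}_2K_\gamma+\mathfrak{K}_3\Phi(s))^2$ in front of $\|F^\Delta_s\|_2^2$. This is exactly where Lemma \ref{lem:Phi_squared}, and with it the condition $\int_0^{\bar d}r^{-1}\Vrho{r}\,\mathrm{d}r<\infty$, is used.
\item \emph{Your closing.} You keep $\tfrac12(\delta I_t)^2$ on the left as a coercive term and work with a running supremum. The crude bound $|A^\mu_s-A^\nu_s|+|C^\mu_s-C^\nu_s|+|A^{\mu\prime}_s-A^{\nu\prime}_s|\le K\sup_{r\le s}|\delta I_r|$ then suffices; the last piece comes from \eqref{eq:A_prime_for_uniqueness}, with $K$ depending on $\alpha$, $\varrho(\bar d)$ and $\Vrho{\bar d}$. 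The $\|f^{\Delta,\varepsilon}_s\|_2^2$ terms can simply be dropped once $\theta$ is small, and Gr\"onwall only needs the kernel $1+s^{-1/2}\in L^1$.
\end{itemize}
Your closing is therefore shorter and, for this theorem, uses only that $\varrho$ has bounded variation. The paper's closing stays closer to the linearised argument, where the boundary term is only ever used for its sign.

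Two small remarks, neither affecting your conclusion.
\begin{itemize}
\item \emph{The exact remainder.} Since $G_\varepsilon(x,0)=0$, the $\gamma$-part of the boundary condition drops out of the test. The flux in the equation for $F^{\Delta,\varepsilon}_t(x)$ is therefore $p_\varepsilon(x)\bigl(\sigma_{A^\mu}(t,0)^2u_t(0)-\sigma_{A^\nu}(t,0)^2v_t(0)\bigr)$. The weight $\gamma(t,C^\mu_t)$ converts the $\mu$-part exactly into $I^{\mu\prime}_t$, so the remainder is exactly $\bigl(\gamma(t,C^\nu_t)-\gamma(t,C^\mu_t)\bigr)\sigma_{A^\nu}(t,0)^2v_t(0)$. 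This is not the expression in your first paragraph, and there is no separate $|A^\mu_s-A^\nu_s|$ part; your bound overestimates the remainder, which is harmless.
\item \emph{The obstacle you flag at the end.} It does not arise. We have $\bigl|\int_0^\infty p_\varepsilon(x)F^{\Delta,\varepsilon}_s(x)\dx\bigr|\le 1$ uniformly in $\varepsilon$ and $s$, and $|I^{\mu\prime}_s-I^{\nu\prime}_s|\le Cs^{-1/2}$ by \eqref{eq:density_control_uniqueness}. Dominated convergence therefore passes $\varepsilon\downarrow0$ through the time integral, exactly as in the paper.
\end{itemize}
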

\begin{proof}
With $\phi(y) = G_{\varepsilon}(x,y)$, given in Definition~\ref{defn:heat_kernel}, note that the boundary term \eqref{eq:bc_with_density} reads as
\begin{align*}
	Z_t^{\mu, G_{\varepsilon}} (x)
	&= - \int_0^t \partial_x p_{\varepsilon}(x) \sigma_A(s, 0)^2 u_s(0) \ds.
\end{align*}
Regarding \eqref{eq:weak_form}, it follows as in Theorem~\ref{thm:uniqueness_linear} that
\begin{align*}
	F^{\mu, \varepsilon}_t(x) - F^{\mu, \varepsilon}_0(x)
	&= - \tfrac{1}{2} \int_0^t \partial_{x} \langle \mu_s , \sigma_{A}(s, \cdot)^2 G_{\varepsilon}(x,\cdot) \rangle \ds
	+ \int_0^t  \langle \mu_s , [b_A(s, \cdot) - A'_s] \, G_{\varepsilon}(x,\cdot) \rangle \ds  \\
	&\quad + 2 \int_0^t \langle \mu_s , [b_A(s, \cdot) - A'_s] \, p_{\varepsilon}(x + \cdot) \rangle \ds
	+ \int_0^t p_{\varepsilon}(x) \sigma_A(s, 0)^2 u_s(0) \ds.
\end{align*}
Now let $\nu$ be another weak solution with $\nu_0 = \mu_0$. We denote by $\tilde{A}$, $\tilde{C}$ and $\tilde{I}$ the quantities in \eqref{eq:ACI_defn_weak_sol} corresponding to the weak solution $\nu$, and by $v_t(x)$ the density of $\nu_t$. Then we get
\begin{align}\label{eq:weak_form_nu_1}
	f^{\nu,\varepsilon}_t (x) - f^{\nu,\varepsilon}_0 (x)
	&= \int_0^t \langle \nu_s , \tfrac{1}{2} \sigma_{\tilde{A}}(s, \cdot)^2 \partial_{xx} G_{\varepsilon}(x, \cdot)
	+ [b_{\tilde{A}}(s, \cdot) - \tilde{A}'_s] \, \partial_x G_{\varepsilon}(x, \cdot) \rangle \ds + Z_t^{\nu, G_{\varepsilon}}(x),
\end{align}
where we have again switched derivatives from $y$ to $x$, and we get
\begin{equation*}
	Z_t^{\nu, G_{\varepsilon}}(x)
	=  - \int_0^t \partial_x p_{\varepsilon}(x) \sigma_{\tilde A}(s, 0)^2 v_s(0) \ds.
\end{equation*}
 Integrating \eqref{eq:weak_form_nu_1} over $[x, \infty)$, we arrive at
\begin{align*}
	\di \big(F^{\mu, \varepsilon}_t(x) - F^{\nu, \varepsilon}_t(x) \big)
	&= - \tfrac{1}{2} \partial_{x} \langle \mu_t  - \nu_t , \sigma_{A}(t, \cdot)^2 G_{\varepsilon}(x,\cdot) \rangle \dt \\
	&\quad - \tfrac{1}{2} \partial_{x} \langle \nu_t , [\sigma_{A}(t, \cdot)^2 -\sigma_{\tilde A}(t, \cdot)^2] G_{\varepsilon}(x,\cdot) \rangle \dt
	\\
	&\quad + \langle \mu_t - \nu_t , [b_A(t, \cdot) - A'_t] \big( G_{\varepsilon}(x,\cdot) + 2 p_{\varepsilon}(x + \cdot) \big) \rangle \dt \\
	&\quad + \langle \nu_t , [b_A(t, \cdot) - b_{\tilde A}(t, \cdot) - A'_t + \tilde A'_t] \big( G_{\varepsilon}(x,\cdot) + 2 p_{\varepsilon}(x + \cdot) \big) \rangle \dt \\
	&\quad + p_{\varepsilon}(x) \big( \sigma_A(t, 0)^2 u_t(0)
	-  \sigma_{\tilde A}(t, 0)^2 v_t(0) \big) \dt,
\end{align*}
where we have added and subtracted appropriate terms to reach this formulation.

Denote $\Delta_t : = \mu_t - \nu_t$, $\delta_{\sigma}(t,x) := \sigma_A(t,x)^2 - \sigma_{\tilde A}(t,x)^2 $, $\delta_b(t,x) := b_A(t,x) - b_{\tilde A}(t,x)$ and $\delta_{A'}(t) := A'_t - \tilde{A}'_t$. We first derive an equation for $ \gamma(t, C_t) F^{\Delta, \varepsilon}_t(x)^2$. Integrating in $x$, we then get
\begin{align*}
	\gamma(t,C_t) \|F^{\Delta, \varepsilon}_t\|_2^2
	&= - \int_0^t \gamma(s, C_s) \int_0^{\infty} \hspace{-7pt} F^{\Delta, \varepsilon}_s(x) \, \partial_{x} \langle \Delta_s , \sigma_{A}(s, \cdot)^2 G_{\varepsilon}(x,\cdot) \rangle \dx \ds \\
	&\quad - \int_0^t \gamma(s, C_s) \int_0^{\infty} \hspace{-7pt} F^{\Delta, \varepsilon}_s(x) \, \partial_{x} \big\langle \nu_s , \delta_{\sigma}(s, \cdot) G_{\varepsilon}(x,\cdot) \big\rangle \dx \ds
	\\
	&\quad + 2 \int_0^t \gamma(s, C_s) \int_0^{\infty} \hspace{-7pt} F^{\Delta, \varepsilon}_s(x) \big\langle \Delta_s , [b_A(s, \cdot) - A'_s] \big( G_{\varepsilon}(x,\cdot) + 2 p_{\varepsilon}(x + \cdot) \big) \big\rangle \dx \ds \\
	&\quad + 2 \int_0^t \gamma(s, C_s) \int_0^{\infty} \hspace{-7pt} F^{\Delta, \varepsilon}_s(x) \big\langle \nu_s , (\delta_b(s, \cdot) - \delta_{A'}(s)) \big( G_{\varepsilon}(x,\cdot) + 2 p_{\varepsilon}(x + \cdot) \big) \big\rangle \dx \ds \\
	&\quad + 2 \int_0^t \hspace{-3pt} \int_0^{\infty} \hspace{-7pt} F^{\Delta, \varepsilon}_s(x) \, p_{\varepsilon}(x) \Big( \gamma(s, C_s)  \sigma_A(s, 0)^2 u_s(0) - \gamma(s, \tilde C_s)  \sigma_{\tilde A}(s, 0)^2 v_s(0)  \Big) \dx \ds \\
	&\quad + 2 \int_0^t \big( \gamma(s, \tilde C_s) - \gamma(s, C_s) \big)  \sigma_{\tilde A}(s, 0)^2 v_s(0) \int_0^{\infty} \hspace{-7pt} F^{\Delta, \varepsilon}_s(x) \, p_{\varepsilon}(x)   \dx \ds \\
	&\quad + \int_0^t \| F^{\Delta, \varepsilon}_s(x)\|_2^2 \big( \partial_s \gamma(s, C_s)\ds + \partial_{c} \gamma(s,C_s) \di C_s \big) \\
	&:= - \mathcal{I}^{\sigma}_{\varepsilon} - \tilde{\mathcal{I}}^{\sigma}_{\varepsilon} + 2 \mathcal{I}^{b}_{\varepsilon} + 2 \tilde{\mathcal{I}}^{b}_{\varepsilon} +2 \mathcal{I}^{p}_{\varepsilon} +2 \tilde{\mathcal{I}}^{p}_{\varepsilon} + \mathcal{I}^{\di \gamma}_{\varepsilon}.
\end{align*}
The terms $\mathcal{I}^{\sigma}_{\varepsilon}$, $\mathcal{I}^{b}_{\varepsilon}$ and $\mathcal{I}^{\di \gamma}_{\varepsilon}$ are identical to those in the proof of Theorem~\ref{thm:uniqueness_linear}, yielding
\begin{align}
-\mathcal{I}^{\sigma}_{\varepsilon}
	&\le
	- \kappa_{\sigma} \int_0^t \gamma(s, C_s)  \| f^{\Delta, \varepsilon}_s   \|_2^2 \ds
	+ \int_0^t \gamma(s, C_s) \Big( \tfrac{\theta}{2} \| f^{\Delta, \varepsilon}_s \|_2^2
	+ \tfrac{1}{2\theta} \| c^{\sigma, \varepsilon}_s \|_2^2 \Big) \ds, \nonumber \\
2\mathcal{I}^{b}_{\varepsilon}
    &\le \Big(3 + K_{b}\big(1 + \tfrac{1}{\theta}\big) \Big)  \int_{0}^t \gamma(s, C_s) \| F^{\Delta, \varepsilon}_s   \|_2^2 \ds
    +  K_b \theta \int_0^t \gamma(s, C_s) \| f^{\Delta, \varepsilon}_s    \|_2^2  \ds \nonumber \\
    &\quad + \int_0^t \gamma(s, C_s) \Big( \| c^{b, \varepsilon}_s \|_2^2 +  2\| r^{\varepsilon}_s \|_2^2 \Big) \ds, \nonumber \\
\mathcal{I}^{\di \gamma}_{\varepsilon}
	&\le \tfrac{K_{\gamma,C}}{\kappa_{\gamma}} \int_0^t \gamma(s, C_s) \| F^{\Delta, \varepsilon}_s \|_2^2\ds, \label{eq:bounds_from_linear_uniq}
\end{align}
where $\theta > 0$ is a free parameter and the terms $c^{\sigma, \varepsilon}_t(x)$, $c^{b, \varepsilon}_t(x)$ and $r^{\varepsilon}_t(x)$ are as in the proof of Theorem~\ref{thm:uniqueness_linear}. Let us now consider the remaining terms. First of all, integrating by parts gives
\begin{align}\label{eq:integral_sigma}	\tilde{\mathcal{I}}^{\sigma}_{\varepsilon}
	&= \int_0^t \gamma(s, C_s)  \int_0^{\infty} f^{\Delta, \varepsilon}_s (x)
	\big( f^{\nu, \varepsilon}_s (x) \delta_{\sigma}(s,x) +  \tilde{c}^{\,\sigma, \varepsilon}_s(x) \big) \dx \ds,
\end{align}
where
\begin{equation*}
\tilde{c}^{\,\sigma, \varepsilon}_t (x) = 
\big\langle \nu_t , \delta_{\sigma}(t,\cdot) G_{\varepsilon}(x,\cdot) \big\rangle - f^{\nu, \varepsilon}_t (x) \delta_{\sigma}(t,x).
\end{equation*}
Recalling Assumption~\ref{assump:coefficient_assumptions}, we use that $\sigma$ is bounded and Lipschitz to yield
\begin{align*}
    |\delta_\sigma(s,x)| \le |\sigma(s, x+ A_s)^2 - \sigma(s, x+\tilde{A}_s)^2|
    \le  K_{\sigma} |A_s - \tilde{A}_s|
    \le \alpha K_\sigma \Big| \int_{s-\bar{d}}^s \varrho(s-r)(I_r - \tilde I_r) \dr \Big|.
\end{align*}
By Assumption~\ref{assump:coefficient_assumptions} $\varrho(t)$ is supported $[0, \bar{d}]$ and bounded, while $\gamma(t,x)$ is bounded and bounded away from 0. Thus, the generalized Young's inequality and Jensen's inequality give
\begin{align*}
    \int_0^t \gamma(s, C_s)  &\int_0^{\infty} f^{\Delta, \varepsilon}_s (x)
	 f^{\nu, \varepsilon}_s (x) \delta_{\sigma}(s,x) \dx \ds \\
     &\le \tfrac{\theta}{2} \int_0^t \gamma(s, C_s) \| f^{\Delta,\varepsilon} \|_2^2 \ds + \tfrac{\alpha^2 K_\sigma^2}{2\theta} \int_0^t \gamma(s, C_s) \| f^{\nu,\varepsilon}_s \|_2^2 \Big| \int_{0}^s \varrho(s-r)(I_r - \tilde I_r) \dr \Big|^2 \ds \\
     &\le \tfrac{\theta}{2} \int_0^t \gamma(s, C_s) \| f^{\Delta,\varepsilon} \|_2^2 \ds + \tfrac{\alpha^2 K_\sigma^2 K_\gamma c_0  }{2\theta} \int_0^t \int_{0}^s s^{-1/2}|I_r - \tilde I_r |^2 \varrho(s-r) \dr \ds \\
     &\le \tfrac{\theta}{2} \int_0^t \gamma(s, C_s) \| f^{\Delta,\varepsilon} \|_2^2 \ds + \tfrac{\alpha^2 K_\sigma^2 K_\gamma K_{\varrho} c_0 }{\theta} \sqrt{t} \int_0^t |I_r - \tilde I_r |^2 \dr,
\end{align*}
where we have used that $\| f^{\nu,\varepsilon}_s \|_2^2$ is of order $O(s^{-1/2})$ by \eqref{eq:density_control_uniqueness}, and we have exchanged the order of integration using Fubini's theorem. Then, one more application of Young's inequality on the second term of the right-hand side of \eqref{eq:integral_sigma} yields
\begin{align}\label{eq:bound_integral_sigma}
|\tilde{\mathcal{I}}^{\sigma}_{\varepsilon}|
	 &\le \tfrac{\alpha^2 K_\sigma^2 K_\gamma K_{\varrho} c_0 }{\theta \kappa_{\gamma}} \sqrt{t} \int_0^t \gamma(s, C_s)|I_s - \tilde I_s |^2 \di s + \int_0^t \gamma(s, C_s) \Big( \tfrac{\theta}{2} \| f^{\Delta, \varepsilon}_s \|_2^2 + \tfrac{1}{2\theta} \| \tilde{c}^{\,\sigma, \varepsilon}_s \|_2^2    \Big) \ds.
\end{align}
Now consider $\tilde{\mathcal{I}}^{b}_{\varepsilon}$. We split it into a $G_{\varepsilon}$ term and a $p_{\varepsilon}$ term, and add and subtract terms to yield
\begin{align*}
	\tilde{\mathcal{I}}^{b}_{\varepsilon} 
    &= \int_0^t \gamma(s, C_s) \int_0^{\infty} \Big( F^{\Delta, \varepsilon}_s(x) f^{\nu, \varepsilon}_s (x) \big( \delta_b(s,x) - \delta_{A'}(s) \big) + F^{\Delta, \varepsilon}_s(x) \big( \tilde{c}^{\,b,\varepsilon}_s(x) + 2\tilde{r}^{\varepsilon}_s(x)   \big) \Big) \dx \ds,
\end{align*}
where we have defined
\begin{align*}
	\tilde{c}^{\,b,\varepsilon}_t(x)
	&:= \big\langle \nu_t, \,  \big( \delta_b(t,\cdot) - \delta_{A'}(t) \big) G_{\varepsilon}(x, \cdot  ) \big\rangle  - f^{\nu, \varepsilon}_t (x)  \big( \delta_b(t,x) - \delta_{A'}(t) \big), \\
	\tilde{r}^{\varepsilon}_t(x)
	&:=  \big\langle \nu_t, \,  \big( \delta_b(t,\cdot) - \delta_{A'}(t) \big) p_{\varepsilon}(x + \cdot  ) \big\rangle .
\end{align*}
Recall that $b$ is Lipschitz by Assumption~\ref{assump:coefficient_assumptions}, and use expression \eqref{eq:A_prime_for_uniqueness} for $A'_t$ to obtain
\begin{align*}
    |\delta_b(s,x)| 
    &\le K_b |A_s - \tilde A_s|
    \le \alpha K_b \Big|\int_{s-\bar{d}}^s \varrho(s-r) (I_r - \tilde{I}_r) \dr \Big| \\
    |\delta_{A'}(s)| &\le \big| A'_s - \tilde A'_s\big| \le \alpha \varrho(\bar{d}) |I_{s - \bar{d}} - \tilde{I}_{s - \bar{d}}| +  \alpha  \int_{0}^{\bar{d}}\big| I_{s-r} - \tilde{I}_{s-r}\big| \di \| \varrho \|_{\mathsc{tv}(0,r)}.
\end{align*}
Applying Young's and Jensen's inequalities, recalling that $I_t = \tilde{I}_t =0$ for $ t\le 0 $, and using the $O(s^{-1/2})$ estimate for $\| f^{\nu,\varepsilon}_s \|_2^2$ and bounded-ness of $\gamma(t, C_t)$ as before, and Fubini's theorem to exchange the order of integration combined with appropriate changes of variables, we compute
\begin{align}
    \int_0^t &\gamma(s, C_s) \int_0^{\infty}  F^{\Delta, \varepsilon}_s(x) f^{\nu, \varepsilon}_s(x) \big( \delta_b(s,x) - \delta_{A'}(s) \big) \dx \ds \nonumber\\
    &\le \int_0^t \gamma(s, C_s) \| F^{\Delta, \varepsilon}_s \|_2^2 \ds + \alpha^2  c_0  \int_0^t s^{-1/2}\gamma(s, C_s) \tfrac{K_{b}^2}{2} \Big(\int_{0}^s \varrho(s-r) (I_r - \tilde{I}_r) \dr \Big)^2 \ds \nonumber \\
    &\quad + \alpha^2  c_0  \int_0^t s^{-1/2}\gamma(s, C_s) \bigg[ \varrho(\bar{d})^2|I_{s-\bar{d}} - \tilde{I}_{s-\bar{d}}|^2  + \Big( \int_{0}^{\bar{d}} \big| I_{s-r} - \tilde{I}_{s-r}\big| \di \| \varrho \|_{\mathsc{tv}(0,r)} \Big)^2 \bigg] \ds \nonumber \\
    &\le \int_0^t \gamma(s, C_s) \| F^{\Delta, \varepsilon}_s \|_2^2 \ds
    + \alpha^2 K_{b}^2 c_0  \tfrac{K_\gamma K_{\varrho}}{\kappa_\gamma} \sqrt{t}  \int_0^t \gamma(s, C_s) |I_s - \tilde{I}_s|^2 \ds \nonumber
     \\
     &\quad + \alpha^2 K_{\gamma} c_0 \bigg[ \varrho(\bar{d})^2 \bar{d}^{-1/2} \int_0^t |I_s - \tilde{I}_s|^2 \ds 
     + \| \varrho \|_{\mathsc{tv}(0,\bar{d})} \int_0^t \hspace{-3pt} \hspace{-3pt} s^{-1/2} \hspace{-3pt} \int_{0}^{\bar{d}} \big| I_{s-r} - \tilde{I}_{s-r}\big|^2  \di \| \varrho \|_{\mathsc{tv}(0,r)} \ds \bigg]. \label{eq:estimate_for_I_tilde_b}
\end{align}
Consider the last double integral in \eqref{eq:estimate_for_I_tilde_b}. A change of variables and Fubini's theorem yield
\begin{gather*}
\int_0^t \hspace{-3pt} \hspace{-3pt} s^{-1/2} \hspace{-3pt} \int_{0}^{\bar{d}} \big| I_{s-r} - \tilde{I}_{s-r}\big|^2  \di \| \varrho \|_{\mathsc{tv}(0,r)} \ds =
    \int_0^t \int_0^{\bar{d} \wedge (t-u)} (u+r)^{-1/2} \di \| \varrho \|_{\mathsc{tv}(0,r)}  \big| I_{u} - \tilde{I}_{u}\big|^2   \di u \\
    \le \int_0^t \int_0^{\bar{d}} (u+r)^{-1/2} \di \| \varrho \|_{\mathsc{tv}(0,r)}  \big| I_{u} - \tilde{I}_{u}\big|^2   \di u.
\end{gather*}
Thus we conclude that
\begin{align}
\tilde{\mathcal{I}}^{b}_{\varepsilon} 
    &\le 
    \tfrac{5}{2}\int_0^t \gamma(s, C_s) \| F^{\Delta, \varepsilon}_s \|_2^2 \ds  + \alpha^2 c_0 \tfrac{K_{\gamma}}{\kappa_{\gamma}} \Big( K_{b}^2  K_{\varrho} \sqrt{t} + \varrho(\bar{d})^2 \bar{d}^{-1/2} \Big)  \int_0^t \gamma(s, C_s) |I_s - \tilde{I}_s|^2 \ds \nonumber
     \\
     &\quad + \alpha^2 K_{\gamma} c_0\| \varrho \|_{\mathsc{tv}(0,\bar{d})} \int_0^t \Phi(s)  \big| I_{s} - \tilde{I}_{s}\big|^2   \di s 
     + \int_0^t \gamma(s, C_s) \Big( \tfrac{1}{2}\| \tilde{c}^{\,b, \varepsilon}_s \|_2^2 +  \| \tilde{r}^{\varepsilon}_s \|_2^2 \Big) \ds, \label{eq:bound_integral_b}
\end{align}
where we have denoted $\Phi(s) := \int_0^{\bar{d}} (s+r)^{-1/2} \di \| \varrho \|_{\mathsc{tv}(0,r)}$ (which is in $L^2$ by Lemma \ref{lem:Phi_squared}).

We are left with having to estimate $\mathcal{I}^{p}_{\varepsilon}$ and $\tilde{\mathcal{I}}^{p}_{\varepsilon}$. We leave the latter as it is for the moment and deal with it after taking the limit as $\varepsilon \downarrow 0$; for the first, recalling that testing the weak formulation \eqref{eq:weak_form} and the boundary conditions \eqref{eq:bc_with_density} with $\phi \equiv - \mathbf{1}$ yields $Z^{\mu, - \mathbf{1}}_t = I_t$ and $Z^{\nu, - \mathbf{1}}_t = \tilde I_t$, we have
\begin{align}
\mathcal{I}^{p}_{\varepsilon} 
&=\int_0^t \int_0^{\infty} F^{\Delta, \varepsilon}_s(x) \, p_{\varepsilon}(x) \Big( \gamma(s, C_s) \, \sigma_A(s, 0)^2  u_s(0) - \gamma(s, \tilde C_s) \,  \sigma_{\tilde A}(s, 0)^2 v_s(0)  \Big) \dx \ds \nonumber \\
&=  \int_0^{\infty} p_{\varepsilon}(x) \int_0^t F^{\Delta, \varepsilon}_s(x)   \di \bigl( Z^{\mu, - \mathbf{1}}_s- Z^{\nu, - \mathbf{1}}_s  \bigr) \dx
=  \int_0^t \int_0^{\infty} p_{\varepsilon}(x) F^{\Delta, \varepsilon}_s(x) \dx  \bigl( I^\prime_s- \tilde{I}^\prime_s  \bigr) \ds. \label{eq:bound_integral_p}
\end{align}
We now get ready to take the limit as $\varepsilon \downarrow 0$ on both sides of the inequality
\begin{equation}\label{eq:uniq_final_ineq}
    \gamma(t,C_t) \|F^{\Delta, \varepsilon}_t\|_2^2 \le
- \mathcal{I}^{\sigma}_{\varepsilon} + |\tilde{\mathcal{I}}^{\sigma}_{\varepsilon}| + 2 \mathcal{I}^{b}_{\varepsilon} + 2 \tilde{\mathcal{I}}^{b}_{\varepsilon} +2 \mathcal{I}^{p}_{\varepsilon} +2 \tilde{\mathcal{I}}^{p}_{\varepsilon} + \mathcal{I}^{\di \gamma}_{\varepsilon}.
\end{equation}
Define $f^{\Delta}_t(x) := u_t(x) - v_t(x)$ and $F^{\Delta}_t(x) := \int_x^{\infty} f_t^{\Delta}(y) \dy$. Note that by continuity of the density $u_t(x)$, we must have that $\lim_{\varepsilon \downarrow 0} f^{\mu, \varepsilon}_t(x) = u_t(x)$ (and equivalently for $v_t(x)$). Thus $f^{\Delta, \varepsilon}_t (x) \to f^{\Delta}_t(x)$ as $\varepsilon \downarrow 0$, and similarly, since $u_t(x)$ and $v_t(x)$ are integrable, dominated convergence yields that $F^{\Delta, \varepsilon}_t (x) \to F^{\Delta}_t(x)$ as $\varepsilon \downarrow 0$. By Lemmas~\ref{lemma:error_terms_decay_linear} and \ref{lemma:error_terms_decay_linear_2} applied to $h_t \in \{ c^{\sigma, \varepsilon}_s,   \tilde{c}^{\,\sigma, \varepsilon}_s, c^{b, \varepsilon}_s, \tilde{c}^{\,b, \varepsilon}_s, r^{\varepsilon}_s, \tilde{r}^{\varepsilon}_s \}$, all commutator and remainder terms of the form $\int_0^t \gamma(s,C_s) \| h_s\|_2^2 \ds$ go to $0$ as $\varepsilon \downarrow 0$. Passing to the limits in the bounds \eqref{eq:bounds_from_linear_uniq}, \eqref{eq:bound_integral_sigma}, \eqref{eq:bound_integral_b}, we thus get
\begin{align*}
    \lim_{\varepsilon \downarrow 0} \big( &- \mathcal{I}^{\sigma}_{\varepsilon} + |\tilde{\mathcal{I}}^{\sigma}_{\varepsilon}| + 2 \mathcal{I}^{b}_{\varepsilon} + 2 \tilde{\mathcal{I}}^{b}_{\varepsilon} + \mathcal{I}^{\di \gamma}_{\varepsilon} \big)
    \le \Big( K_{b}\big(1 + \tfrac{1}{\theta}\big) + 8 + \tfrac{K_{\gamma,C}}{\kappa_\gamma} \Big) \int_{0}^t \gamma(s, C_s) \| F^{\Delta}_s   \|_2^2 \ds \\
    &+ \big((1 + K_b) \theta - \kappa_{\sigma} \big) \int_0^t \gamma(s, C_s)  \| f^{\Delta}_s   \|_2^2 \ds
    + \alpha^2 K_{\gamma} c_0\| \varrho \|_{\mathsc{tv}(0,\bar{d})} \int_0^t \Phi(s)  \big| I_{s} - \tilde{I}_{s}\big|^2   \di s 
     \\
    &+ \alpha^2 c_0 \tfrac{K_{\gamma}}{\kappa_{\gamma}} \Big( K_{\varrho} ( \tfrac{ K_\sigma^2}{\theta} + K_{b}^2  ) \sqrt{t} + \varrho(\bar{d})^2 \bar{d}^{-1/2} \Big)  \int_0^t \gamma(s, C_s) |I_s - \tilde{I}_s|^2 \ds.
\end{align*}
Using \eqref{eq:limit_pe_FDelta} and \eqref{eq:bound_integral_p} we have
\begin{equation*}
    \lim_{\varepsilon \downarrow 0} 2 \mathcal{I}^{p}_{\varepsilon}
    =   \int_0^t (\tilde{I}_s - I_s) \tfrac{\di}{\ds}\big( I_s- \tilde{I}_s  \big) \ds = - \frac{1}{2}\int_0^t \tfrac{\di}{\ds}\big( I_s- \tilde{I}_s  \big)^2 \ds \le 0.
\end{equation*}
Finally, dominated convergence yields
\begin{align*}
    \lim_{\varepsilon \downarrow 0} 2 \tilde{\mathcal{I}}^{p}_{\varepsilon}
    &= \int_0^t \big( \gamma(s, \tilde C_s) - \gamma(s, C_s) \big)  \sigma_{\tilde A}(s, 0)^2 v_s(0) (\tilde{I}_s - I_s) \ds \\
    &\leq 4 \tilde{K}_{\gamma, \varrho, \sigma} \int_0^t \int_{0}^s  |\tilde I_{r} - I_{r}| \dr \, v_s(0) |\tilde{I}_s - I_s| \ds,
\end{align*}
by the Lipschitzness of $\gamma(t, x)$, Assumption~\ref{assump:coefficient_assumptions}, and Jensen's inequality. Bounding $v_s(0)$ by \eqref{eq:density_control_uniqueness}, we can thus integrate by parts and use Jensen's inequality to get
\begin{align*}
    \lim_{\varepsilon \downarrow 0} 2 \tilde{\mathcal{I}}^{p}_{\varepsilon}   
    &\le 2 \tilde{K}_{\gamma, \varrho, \sigma} c_1 \int_0^t \tfrac{\di}{\ds} \Big(\int_{0}^s  |\tilde I_{r} - I_{r}| \dr \Big)^2 s^{-1/2} \ds
    \\
    &\le 2 \tilde{K}_{\gamma, \varrho, \sigma} c_1 \bigg[ \Big(\int_{0}^t |\tilde I_{r} - I_{r}|\dr \Big)^2 t^{-1/2} 
    + \tfrac{1}{2} \int_0^t  \Big(\int_{0}^s |\tilde I_{r} - I_{r}| \dr \Big)^2 s^{-3/2} \ds \bigg]
        \\
    &\le 2 \tilde{K}_{\gamma, \varrho, \sigma} c_1 \bigg[ t^{1/2}\int_{0}^t |\tilde I_{r} - I_{r}|^2 \dr
    + \tfrac{1}{2} \int_0^t  s^{-1/2} \ds \int_{0}^t |\tilde I_{r} - I_{r}|^2 \dr  \bigg]
    \\
    &\le \tfrac{4 \tilde{K}_{\gamma, \varrho, \sigma} c_1 }{\kappa_\gamma} t^{1/2} \int_{0}^t \gamma(r, C_r) |\tilde I_{r} - I_{r}|^2 \dr.
\end{align*}
Then, putting the estimates together and sending $\varepsilon \downarrow 0$ in \eqref{eq:uniq_final_ineq}, we get
\begin{align}\label{eq:final_estimate_uniq}
    \gamma(t,C_t) \|F^{\Delta}_t\|_2^2
    &\le \mathfrak{K}_0  \int_{0}^t  \gamma(s, C_s) \| F^{\Delta}_s   \|_2^2 \ds 
    +  \mathfrak{K}_1 \int_0^t \gamma(s, C_s)  \| f^{\Delta}_s   \|_2^2 \ds \nonumber \\
    &\quad
    + \mathfrak{K}_2 \int_0^t  \gamma(s, C_s) |I_s - \tilde{I}_s|^2 \ds
    + \mathfrak{K}_3 \int_0^t \Phi(s)  \big| I_{s} - \tilde{I}_{s}\big|^2   \ds,
\end{align}
where
\begin{gather*}
    \mathfrak{K}_0 = \mathfrak{K}_0(\theta) = \Big( K_{b}\big(1 + \tfrac{1}{\theta}\big) + 8 + \tfrac{K_{\gamma,C}}{\kappa_\gamma} \Big),
    \quad
    \mathfrak{K}_1 = \mathfrak{K}_1(\theta) = \big((1 + K_b) \theta - \kappa_{\sigma} \big), \\
    \mathfrak{K}_2 = \mathfrak{K}_2(\theta) = \tfrac{4 \tilde{K}_{\gamma, \varrho, \sigma} c_1 }{\kappa_\gamma} \sqrt{t} + \alpha^2 c_0 \tfrac{K_{\gamma}}{\kappa_{\gamma}} \Big( K_{\varrho} ( \tfrac{ K_\sigma^2}{\theta} + K_{b}^2  ) \sqrt{t} + \varrho(\bar{d})^2 \bar{d}^{-1/2} \Big), 
    \quad
    \mathfrak{K}_3 = \alpha^2 K_{\gamma} c_0\| \varrho \|_{\mathsc{tv}(0,\bar{d})}.
\end{gather*}
We now estimate the term $\big| I_{s} - \tilde{I}_{s}\big|^2$. For each $t>0$, let $\delta(t) > 0$ and take a smooth function $x\mapsto \chi(t,x)$ such that $\chi \equiv 0$ for $0 \le x \le \delta(t)/2 $ and $\chi \equiv 1$ on $ x \ge \delta(t)$, with $\partial_x\chi(t,x) \le \kappa / \delta(t)$ for a fixed constant $\kappa>0$.  Then, $\| \partial_x\chi \|_2^2\leq \kappa^2/2\delta(t)$, so we can observe that
\begin{align}
    |I_t - \tilde{I}_t|^2 
    &\le 2|\langle \Delta_t, \chi \rangle |^2 +  2|\langle \Delta_t, \mathbf{1}_{[0, \delta(t)]} \rangle |^2
    \le  2\Big| \int_{0}^{\infty} \chi(t,x) f^{\Delta}_t(x) \dx \Big|^2 + 2 \Big| \int_{0}^{\delta(t)} \!f^{\Delta}_t(x) \dx \Big|^2 \nonumber \\
    &\le 2 \Big| - \int_{0}^{\infty} \partial_x\chi(t,x) \int_x^{\infty} f^{\Delta}_t(y) \dy \dx \Big|^2 + 2 \delta(t) \| f_t^{\Delta} \|_2^2 \nonumber \\
    &\le 2\| \partial_x\chi \|_2^2 \| F^{\Delta}_t \|_2^2 +2 \delta(t) \| f_t^{\Delta} \|_2^2
    \le \tfrac{\kappa^2}{ \delta(t)} \| F^{\Delta}_t \|_2^2 +2 \delta(t)\| f_t^{\Delta} \|_2^2. \label{eq:diff_I_delta_bound}
\end{align}
Going back to \eqref{eq:final_estimate_uniq}, we first take $0 < \theta < \kappa_s/(1+K_b)$, so $\mathfrak{K}_2 < 0$. Then, with such a fixed $\theta$, set
\begin{equation*}
    \delta(t) := \frac{|\mathfrak{K}_1| \kappa_{\gamma}}{4 (\mathfrak{K}_2 K_{\gamma} + \mathfrak{K}_3 \Phi(t))},
\end{equation*}
which is positive and bounded. Using \eqref{eq:diff_I_delta_bound} with $\delta(t)$ as above in the last two terms on the right-hand side of \eqref{eq:final_estimate_uniq} yields
\begin{gather*}
    \int_0^t \big( \mathfrak{K}_2  \gamma(s, C_s) + \mathfrak{K}_3 \Phi(s)\big) |I_s - \tilde{I}_s|^2 \ds\\
    \le \frac{4 \kappa^2 }{|\mathfrak{K}_1|\kappa_{\gamma}}\int_0^t \big(\mathfrak{K}_2 K_{\gamma} + \mathfrak{K}_3 \Phi(s)\big)^2 \| F^{\Delta}_s \|^2_2 \ds
    + \frac{|\mathfrak{K}_1|\kappa_{\gamma}}{2} \int_0^t \| f^{\Delta}_s \|^2_2 \ds
    \\
    \le \frac{4 \kappa^2 }{|\mathfrak{K}_1| \kappa_{\gamma}^2}\int_0^t \big(\mathfrak{K}_2 K_{\gamma} + \mathfrak{K}_3 \Phi(s)\big)^2 \gamma(s, C_s) \| F^{\Delta}_s \|^2_2 \ds
    + \frac{|\mathfrak{K}_1|}{2} \int_0^t \gamma(s, C_s)  \|f^{\Delta}_s \|^2_2 \ds,
\end{gather*}
where we have also used that $\gamma(s,C_s)/\kappa_{\gamma} \ge 1$.
Thus \eqref{eq:final_estimate_uniq} reduces to
\begin{align*}
    \gamma(t,C_t) \|F^{\Delta}_t\|_2^2
    &\le \int_{0}^t  \Big( \mathfrak{K}_0 + \frac{4 \kappa^2 }{|\mathfrak{K}_1| \kappa_{\gamma}^2}\big(\mathfrak{K}_2 K_{\gamma} + \mathfrak{K}_3 \Phi(s)\big)^2 \Big) \gamma(s, C_s) \| F^{\Delta}_s   \|_2^2 \ds \\
    &\quad +  \int_0^t \Big( \mathfrak{K}_1 + \frac{|\mathfrak{K}_1|}{2} \Big)  \gamma(s, C_s) \| f^{\Delta}_s \|_2^2 \ds. 
\end{align*}
Recalling $\mathfrak{K}_1 <0$ by our choice of $\theta$, the last term on the right-hand side is negative and we can drop it. Since $\Phi$ is square-integrable by Lemma~\ref{lem:Phi_squared} below, a Gr\"onwall argument concludes the proof.
\end{proof}

\begin{lemma}\label{lem:Phi_squared}
Under Assumption~\ref{assump:coefficient_assumptions}, the function $\Phi(s) = \int_0^{\bar{d}} (s + r)^{-1/2} \di \Vrho{r}$ is in $L^2([0,T])$.
\end{lemma}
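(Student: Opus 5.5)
We need to show that $\int_0^T \Phi(s)^2\,\mathrm{d}s<\infty$, where $\Phi(s)=\int_0^{\bar d}(s+r)^{-1/2}\,\mathrm{d}\Vrho{r}$. Write $m(\mathrm{d}r):=\mathrm{d}\Vrho{r}$ for the (non-negative, finite) total variation measure of $\varrho$ on $[0,\bar d]$. Set $V(r):=\Vrho{r}$ and $M:=\Vrho{\bar d}<\infty$.

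We expand the square and use Tonelli, since everything is non-negative:
\[
\int_0^T \Phi(s)^2 \ds = \int_0^{\bar d}\!\!\int_0^{\bar d} \int_0^T (s+r)^{-1/2}(s+r')^{-1/2}\ds \; m(\mathrm{d}r)\, m(\mathrm{d}r').
\]
For fixed $r,r'>0$, take $r\le r'$ by symmetry. Then $(s+r')^{-1/2}\le (s+r)^{-1/2}$, so the inner integral is at most $\int_0^T (s+r)^{-1}\ds=\log\bigl((T+r)/r\bigr)\le \log(T+\bar d)+\log(1/r)$. Hence the inner integral is bounded by $C+\log(1/\min(r,r'))$, which gives
\[
\int_0^T \Phi^2 \ds \le C M^2 + 2M\int_0^{\bar d}\log^+(1/r)\, m(\mathrm{d}r).
\]
The constant bound uses that $\log^+(1/\min(r,r'))\le \log^+(1/r)+\log^+(1/r')$.

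It remains to check that $\int_0^{\bar d}\log(1/r)\,m(\mathrm{d}r)<\infty$ near $0$, and this is where the hypothesis $\int_0^{\bar d} r^{-1}V(r)\dr<\infty$ enters. Since $\varrho(0)=0$, the measure $m$ has no atom at $0$, so $V(0)=0$ and $V$ is right-continuous and non-decreasing. We integrate by parts on $(\eta,1]$, taking $\bar d\ge 1$ for concreteness (otherwise use $\bar d$ in place of $1$):
\[
\int_{(\eta,1]}\log(1/r)\,m(\mathrm{d}r) = -\log(1/\eta)V(\eta) + \int_\eta^1 r^{-1}V(r)\dr \le \int_0^1 r^{-1}V(r)\dr.
\]
This uses $\mathrm{d}\log(1/r)=-r^{-1}\dr$, which is continuous, so no atom correction is needed. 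Letting $\eta\downarrow0$ with monotone convergence gives the finiteness. On $[1,\bar d]$ the function $\log^+(1/r)$ vanishes.

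The pairs with $r=0$ or $r'=0$ are $m$-null, because there is no atom at $0$. The main point, and essentially the only subtle step, is this integration by parts turning $\int\log(1/r)\,m(\mathrm{d}r)$ into the assumed condition $\int r^{-1}\Vrho{r}\,\mathrm{d}r<\infty$. The rest is Tonelli together with the elementary logarithmic bound.
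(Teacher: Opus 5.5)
Your proof is correct, but it is organised differently from the paper's. The paper integrates by parts inside $\Phi$ first, obtaining $\Phi(s)=\Vrho{\bar d}(s+\bar d)^{-1/2}+\tfrac12\int_0^{\bar d}\Vrho{r}(s+r)^{-3/2}\,\mathrm{d}r$. It then bounds the $L^2([0,T])$-norm of the second term by Minkowski's integral inequality together with $\|(\cdot+r)^{-3/2}\|_{L^2(0,T)}\le(\sqrt2\,r)^{-1}$. This gives, in two lines and without any discussion of atoms or of the diagonal, a bound that is linear in $\int_0^{\bar d}r^{-1}\Vrho{r}\,\mathrm{d}r$.

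You instead square first and reduce by Tonelli to the kernel $K(r,r')=\int_0^T(s+r)^{-1/2}(s+r')^{-1/2}\,\mathrm{d}s$ integrated against $m\otimes m$. You bound $K$ by $C+\log^+(1/\min(r,r'))$, and only at the end convert the logarithmic moment $\int\log^+(1/r)\,m(\mathrm{d}r)$ into the assumed quantity. That last step is in fact an identity by Tonelli: $\int_{(0,1]}\log(1/r)\,m(\mathrm{d}r)=\int_0^1 r^{-1}\Vrho{r}\,\mathrm{d}r$ when $\bar d\ge 1$. When $\bar d<1$, replacing $1$ by $\bar d$ in your integration by parts produces an extra boundary term $\log(1/\bar d)\Vrho{\bar d}$, which you did not write but which is finite, so nothing breaks.

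What your route buys is sharpness. If you split the $s$-integral at $\max(r,r')$, you get $K(r,r')\le 2\sqrt2+\log^+(T/\max(r,r'))$. The image of $m\otimes m$ under $(r,r')\mapsto\max(r,r')$ is the Stieltjes measure of $r\mapsto\Vrho{r}^2$. So this gives $\Phi\in L^2([0,T])$ under the weaker condition $\int_0^{\bar d}r^{-1}\Vrho{r}^2\,\mathrm{d}r<\infty$. The matching lower bound $K(r,r')\ge\log(1+T/\max(r,r'))$ shows this condition is also necessary. The Minkowski route is intrinsically linear in $\Vrho{\cdot}$ and cannot detect this.
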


\begin{proof}
Stieltjes integration by parts and using $\Vrho{0+} = 0$ yields
\begin{equation}\label{eq:Phi_IBP}
\Phi(s) = \frac{\Vrho{\bar d}}{\sqrt{s + \bar d}} + \frac{1}{2} \int_0^{\bar d} \frac{\Vrho{r}}{(s+r)^{3/2}} \dr.
\end{equation}
Squaring with $(a+b)^2 \le 2a^2 + 2b^2$, and integrating in $s$ on $[0,T]$ gives
\begin{equation*}
\Phi(s)^2 \le 2\,\Vrho{\bar d}^2 \log\big(\tfrac{T+\bar d}{\bar d}\big) + \frac{1}{2}\int_0^T \Big(\int_0^{\bar d} \frac{\Vrho{r}}{(s+r)^{3/2}} \dr\Big)^{\!2} \ds.
\end{equation*}
For the second term on the right-hand side, apply Minkowski's integral inequality to get
\begin{equation*}
\bigg( \int_0^T \Big( \int_0^{\bar d} \frac{\Vrho{r}}{(s+r)^{3/2}} \dr \Big)^{ 2} \ds \bigg)^{ 1/2}
\le  \int_0^{\bar d} \bigg( \int_0^T \frac{\Vrho{r}^2}{(s+r)^3} \ds \bigg)^{ 1/2} \dr
\le  \int_0^{\bar d} \frac{\Vrho{r}}{r \sqrt{2}} \dr,
\end{equation*}
where we computed the inner integral yielding $\tfrac{1}{2}\big(r^{-2} - (T+r)^{-2}\big) \le \tfrac{1}{2}r^{-2}$. Since $\Vrho{r}/r $ is integrable by Assumption~\ref{assump:coefficient_assumptions}, we are done.
\end{proof}

\subsection{Proof of Theorems \ref{thm:free_boundary_problem} and \ref{thm:summary_contributions}}\label{subsect:proof_thms2.2-2.3}

By Theorem \ref{thm:uniqueness_linear}, any limit point $(\mu^*,I^*,Z^*)$ from Section \ref{sect:weak_convergence} agrees with the corresponding $\mathcal{F}^{\mu^*}_t$-adapted measure flow $\nu^*$ from Proposition \ref{prop:linearised_problem_nu}. We can observe that $\nu^*$ is supported on the set of weak solutions in the sense of Definition \ref{def:clas_weak_formulation}. However, Theorem \ref{thm:gen_uniqueness} gives that this set is a singleton, so we conclude that $(\mu^n,A^n)$ converges weakly to a deterministic limit $(\mu,A)$, which is the unique weak solution in the sense of Definition \ref{def:clas_weak_formulation}, for any $T>0$. Moreover, Proposition \ref{prop:linearised_problem_nu} also gives that $\mu$ has a density $u_s(x)$ with the required regularity properties so that $u(s,x):=u_s(x-A_s)$ yields a weak solution in the sense of Definition \ref{def:weak_formulation} with the desired Aronson estimate.

It remains to confirm that the uniqueness from Theorem \ref{thm:gen_uniqueness} extends to weak solutions in the sense of Definition \ref{def:weak_formulation}. Then, the above concludes the proof of Theorems \ref{thm:free_boundary_problem} and \ref{thm:summary_contributions}. For any given weak solution $(v,A)$ in the sense of Definition \ref{def:weak_formulation}, we set $\bar{\nu}_0:=P_0\circ \theta_{a_0}^{-1}$ and $\bar{\nu}_t:=v_t(x)\dx$ for $t>0$, where $v_t(x):=v(t,x+A_t)$. Then, $\bar{\nu}$ satisfies \eqref{eq:weak_form}--\eqref{eq:ACI_defn_weak_sol} of Definition \ref{def:clas_weak_formulation}, for any $T>0$. However, we cannot invoke Theorem \ref{thm:gen_uniqueness} directly, as we do not have the control \eqref{eq:density_control_uniqueness} a priori. Nevertheless, a few minor observations confirm that there is indeed uniqueness.

First of all, notice that we have   $\int_{0}^\infty x  \hspace{1pt}\di \bar{\nu}_t(x) <\infty$ and hence Lemma \ref{lemma:F_decay} applies. Indeed, by taking a suitable smooth (concave) test function $\phi_n(x)$ which approximates $x\mapsto x\land n$, we can perform a Gr\"onwall argument in \eqref{eq:the_weak_formulation} for $\int_{A_t}^\infty \phi_n(x)v(t,x) \dx$ to bound it in terms of a factor $e^{ct}$ and  $\int_{a_0}^\infty x \hspace{0.5pt}\di P_0(x)<\infty$, and so monotone convergence gives $\int_{A_t}^\infty x v(t,x) \dx<\infty$, as desired.

There are no issues with Lemmas \ref{lemma:error_terms_decay_linear} and \ref{lemma:square_of_I_diff}, but Lemma \ref{lemma:error_terms_decay_linear_2}, need not hold without additional control. Nevertheless, the proof of Theorem \ref{thm:uniqueness_linear} still goes through for $\Delta_t:=\bar{\nu}_t-\tilde{\nu}_t$, where we let $\tilde{\nu}$ denote the (deterministic) decoupled weak solution defined from $I^{\bar{\nu}}_t=1-\bar{\nu}_t(\R^+)$ as in Proposition \ref{prop:linearised_problem_nu}. Indeed, we can notice from \eqref{eq:unique_bound_L1-remainder} along with $|F^{\Delta, \varepsilon}_s(x)|\leq 2$ that it suffices to have $\int_0^t \Vert r_s^\varepsilon \Vert_1 \ds \rightarrow 0$ as $\varepsilon \downarrow 0$. By \eqref{eq:remainder_pointwise_bound} in the proof of Lemma \ref{lemma:error_terms_decay_linear_2}, there is a $c>0$ such that
\begin{equation}\label{eq:remainder_L1_decay}
	\Vert r^\varepsilon_t \Vert_1 \leq c \sqrt{\varepsilon} \langle |\Delta_t|, p_{2\varepsilon}\rangle,
\end{equation}
for all $t\in[0,T]$ and $\varepsilon>0$. Since $|\Delta_t|(0,\infty)\leq 2$, we thus have $\Vert r^\varepsilon_t \Vert_1 \leq c /\sqrt{\pi}$ for all $t\in[0,T]$ and $\varepsilon>0$. Moreover, by the continuity of densities $v$ and $\tilde{v}$, Lebesgue's differentiation theorem gives
\[
\langle |\Delta_s|, p_{2\varepsilon}(y) \rangle \rightarrow (v_s(0)+\tilde{v}_s(0))/2\quad \text{as}\quad \varepsilon \downarrow 0,
\]
for all $s\in(0,T]$, so \eqref{eq:remainder_L1_decay} and dominated convergence confirms that the remainder $r^\varepsilon$ vanishes in the required way. Consequently, the estimates in Theorem \ref{thm:uniqueness_linear} give $\bar{\nu}=\tilde{\nu}$ and hence $\bar{\nu}$ satisfies \eqref{eq:density_control_uniqueness}. It follows that Theorem \ref{thm:gen_uniqueness} applies to $\bar{\nu}$, so there is indeed uniqueness of weak solutions in the sense of Definition \ref{def:weak_formulation}. This concludes the proof of Theorems \ref{thm:free_boundary_problem} and \ref{thm:summary_contributions}.

%%% bibliography

\bibliographystyle{alpha}
\bibliography{biblio_epidemic}

\begin{thebibliography}{BDWO23}

\bibitem[Bar20]{barnes}
C.~L. Barnes.
\newblock Hydrodynamic limit and propagation of chaos for {B}rownian particles reflecting from a {N}ewtonian barrier.
\newblock {\em Ann. Appl. Probab.}, 30(4):1582--1613, 2020.

\bibitem[BB20]{banerjee_brown}
S.~Banerjee and B.~Brown.
\newblock Inert drift system in a viscous fluid: steady state asymptotics and exponential ergodicity.
\newblock {\em Trans. Amer. Math. Soc.}, 373(9):6369--6409, 2020.

\bibitem[BBD19]{banerjee_burdzy_duarte}
S.~Banerjee, K.~Burdzy, and M.~Duarte.
\newblock Gravitation versus {B}rownian motion.
\newblock {\em Ann. Inst. Henri Poincar\'{e} Probab. Stat.}, 55(3):1531--1565, 2019.

\bibitem[BBE23]{banerjee_etal}
S.~Banerjee, A.~Budhiraja, and B.~Estevez.
\newblock The inert drift atlas model.
\newblock {\em Comm. Math. Phys.}, 399(3):2083--2147, 2023.

\bibitem[BCS04]{Burdzy_et_al}
K.~Burdzy, Z.-Q. Chen, and J.~Sylvester.
\newblock The heat equation and reflected {B}rownian motion in time-dependent domains.
\newblock {\em Ann. Probab.}, 32(1B):775--804, 2004.

\bibitem[BDWO23]{berestycki_desjardins_weitz_oury}
H.~Berestycki, B.~Desjardins, J.~S. Weitz, and J.-M. Oury.
\newblock Epidemic modeling with heterogeneity and social diffusion.
\newblock {\em J. Math. Biol.}, 86(4):Paper No. 60, 59, 2023.

\bibitem[BM19]{banerjee_etal2}
S.~Banerjee and D.~Mukherjee.
\newblock Join-the-shortest queue diffusion limit in {H}alfin--{W}hitt regime: tail asymptotics and scaling of extrema.
\newblock {\em Ann. Appl. Probab.}, 29(2):1262--1309, 2019.

\bibitem[BN02]{burdzy_nualart}
K.~Burdzy and D.~Nualart.
\newblock {B}rownian motion reflected on {B}rownian motion.
\newblock {\em Probab. Theory Related Fields}, 122(4):471--493, 2002.

\bibitem[Bre22a]{Bressloff2022}
P.~C. Bressloff.
\newblock Diffusion-mediated absorption by partially reactive targets: {B}rownian functionals and generalized propagators.
\newblock {\em J. Phys. A: Math. Theor.}, 55:205001, 2022.

\bibitem[Bre22b]{Bressloff2022spectral}
P.~C. Bressloff.
\newblock Spectral theory of diffusion in partially absorbing media.
\newblock {\em Proc. R. Soc. A}, 478:20220319, 2022.

\bibitem[Bre24]{Bressloff2024}
P.~C. Bressloff.
\newblock A generalized {D}ean--{K}awasaki equation for an interacting {B}rownian gas in a partially absorbing medium.
\newblock {\em Proc. R. Soc. A}, 480:20230915, 2024.

\bibitem[BS22]{baker_shkolnikov_undercooling}
G.~Baker and M.~Shkolnikov.
\newblock Zero kinetic undercooling limit in the supercooled {S}tefan problem.
\newblock {\em Ann. Inst. Henri Poincar\'{e} Probab. Stat.}, 58(2):861--871, 2022.

\bibitem[BY82]{barlow_yor}
M.~T. Barlow and M.~Yor.
\newblock Semimartingale inequalities via the {G}arsia--{R}odemich--{R}umsey lemma, and applications to local times.
\newblock {\em J. Funct. Anal.}, 49(2):198--229, 1982.

\bibitem[DPdH96]{Dai_Pra}
P.~Dai~Pra and F.~den Hollander.
\newblock {M}c{K}ean--{V}lasov limit for interacting random processes in random media.
\newblock {\em J. Stat. Phys.}, 84(3):735--772, 1996.

\bibitem[FS22]{fausti_soj_22}
E.~Fausti and A.~S{\o}jmark.
\newblock An interacting particle system for the front of an epidemic advancing through a susceptible population.
\newblock Preprint, arXiv:2210.09286, 2022.

\bibitem[Gre20]{Grebenkov2020}
D.~S. Grebenkov.
\newblock Paradigm shift in diffusion-mediated surface phenomena.
\newblock {\em Phys. Rev. Lett.}, 125:078102, 2020.

\bibitem[Gre22]{Grebenkov2022}
D.~S. Grebenkov.
\newblock An encounter-based approach for restricted diffusion with a gradient drift.
\newblock {\em J. Phys. A: Math. Theor.}, 55:045203, 2022.

\bibitem[Had16]{Hadeler}
K.~P. Hadeler.
\newblock {S}tefan problem, traveling fronts, and epidemic spread.
\newblock {\em Discrete Contin. Dyn. Syst. Ser. B}, 21(2):417--436, 2016.

\bibitem[HM22]{hambly_meier}
B.~Hambly and J.~Meier.
\newblock {M}c{K}ean--{V}lasov equations with positive feedback through elastic stopping times.
\newblock {\em Electron. Commun. Probab.}, 27:Paper No. 41, 13, 2022.

\bibitem[HS19]{HamblySojmark19}
B.~Hambly and A.~S{\o}jmark.
\newblock An {SPDE} model for systemic risk with endogenous contagion.
\newblock {\em Finance Stoch.}, 23(3):535--594, 2019.

\bibitem[It{\^o}83]{Ito_S_prime}
K.~It{\^o}.
\newblock Distribution-valued processes arising from independent {B}rownian motions.
\newblock {\em Math. Z.}, 182(1):17--33, 1983.

\bibitem[Kni01]{Knight}
F.~B. Knight.
\newblock On the path of an inert object impinged on one side by a {B}rownian particle.
\newblock {\em Probab. Theory Related Fields}, 121(4):577--598, 2001.

\bibitem[Led16]{ledger_M1}
S.~Ledger.
\newblock {S}korokhod's {$M_1$} topology for distribution-valued processes.
\newblock {\em Electron. Commun. Probab.}, 21(34):1--11, 2016.

\bibitem[McK75]{mckean_local}
H.~P. McKean.
\newblock {B}rownian local times.
\newblock {\em Adv. Math.}, 16:91--111, 1975.

\bibitem[Mit83]{Mitoma_S_prime}
I.~Mitoma.
\newblock Tightness of probabilities on {$C([0,1];\mathcal{S}')$} and {$D([0,1];\mathcal{S}')$}.
\newblock {\em Ann. Probab.}, 11(4):989--999, 1983.

\bibitem[Pil14]{reflected_SDEs_book}
A.~Pilipenko.
\newblock {\em An introduction to stochastic differential equations with reflection}.
\newblock Lectures in Pure and Applied Mathematics. Potsdam University Press, 2014.

\bibitem[PP22]{pang_pardoux_2022}
G.~Pang and {\'E}.~Pardoux.
\newblock Functional limit theorems for non-{M}arkovian epidemic models.
\newblock {\em Ann. Appl. Probab.}, 32(3):1615--1665, 2022.

\bibitem[QRZ03]{Zhongmin2004}
Z.~Qian, F.~Russo, and W.~Zheng.
\newblock Comparison theorem and estimates for transition probability densities of diffusion processes.
\newblock {\em Probab. Theory Related Fields}, 127(3):388--406, 2003.

\bibitem[QX23]{Zhongmin2023}
Z.~Qian and X.~Xu.
\newblock Probability bounds for reflecting diffusion processes.
\newblock {\em Statist. Probab. Lett.}, 199:109855, 2023.

\bibitem[QZ04]{QianZheng2004}
Z.~Qian and W.~Zheng.
\newblock A representation formula for transition probability densities of diffusions and applications.
\newblock {\em Stochastic Process. Appl.}, 111(1):57--76, 2004.

\bibitem[RY99]{revuz_yor}
D.~Revuz and M.~Yor.
\newblock {\em Continuous martingales and {B}rownian motion}, volume 293 of {\em Grundlehren der mathematischen Wissenschaften}.
\newblock Springer-Verlag, Berlin, third edition, 1999.

\bibitem[SW26]{sojmark_wunderlich}
A.~S{\o}jmark and F.~Wunderlich.
\newblock Weak convergence of stochastic integrals on {S}korokhod space in {S}korokhod's {$J_1$} and {$M_1$} topologies.
\newblock {\em Probab. Theory Related Fields}, 2026.
\newblock https://doi.org/10.1007/s00440-026-01476-y.

\bibitem[WD21]{Wang_Du_2021}
R.~Wang and Y.~Du.
\newblock Long-time dynamics of a diffusive epidemic model with free boundaries.
\newblock {\em Discrete Contin. Dyn. Syst. Ser. B}, 26(4):2201--2238, 2021.

\bibitem[WD22]{Wang_Du_2022}
R.~Wang and Y.~Du.
\newblock Long-time dynamics of a nonlocal epidemic model with free boundaries: {S}preading-vanishing dichotomy.
\newblock {\em J. Differential Equations}, 327:322--381, 2022.

\bibitem[Whi07]{White}
D.~White.
\newblock Processes with inert drift.
\newblock {\em Electron. J. Probab.}, 12:1509--1546, 2007.

\bibitem[WND19]{Wang_etal_2019}
Z.~Wang, H.~Nie, and Y.~Du.
\newblock Spreading speed for a {W}est {N}ile virus model with free boundary.
\newblock {\em J. Math. Biol.}, 79(2):433--466, 2019.

\end{thebibliography}

\end{document}